\DeclareMathAlphabet{\mathpzc}{OT1}{pzc}{m}{it}
\theoremstyle{plain} 
\newtheorem{theorem}{Theorem}[section]
\newtheorem{lemma}[theorem]{Lemma}
\newtheorem{proposition}[theorem]{Proposition}
\theoremstyle{definition} 
\newtheorem{definition}[theorem]{Definition}
\newtheorem{remark}[theorem]{Remark}
\newtheorem{corollary}[theorem]{Corollary}
\newcommand{\setem}{{\it Set_{mf}}}
\newcommand{\type}{{\it type}}
\newcommand{\mf}{{\bf MF}}
\newcommand{\hott}{{\bf HoTT}}
\newcommand{\mtt}{\mbox{{\bf mTT}}}
\newcommand{\emtt}{\mbox{{\bf emTT}}}
\newcommand{\mltt}{\mbox{{\bf MLTT}}}
\newcommand{\isprop}{\mbox{{\tt IsProp}}}
\newcommand{\isset}{\mbox{{\tt IsSet}}}
\newcommand{\prs}{ \mathsf{pr_S}}
\newcommand{\prp}{ \mathsf{pr_P}}
\newcommand{\gam}{[\Gamma^{\sqbullet}]}
\newcommand{\ourdef}{{\ :\equiv\ }}
\newcommand{\funi} {\mathcal{U}_0}
\newcommand{\suni} {\mathcal{U}_1}
\newcommand{\guni} {\mathcal{U}_i}
\newcommand{\set} {\mathsf{Set}}
\newcommand{\fpr} {\mathsf{pr}_1}
\newcommand{\spr} {\mathsf{pr}_2}
\newcommand{\ipr}{\mathsf{pr}_i}
\newcommand{\prop} {\mathsf{Prop}}
\newcommand{\propo} {\mathsf{Prop}_{\mathcal{U}_0}}
\newcommand{\inter} {{\bullet}}
\newcommand{\trasp} {\mathsf{trp}}
\newcommand{\can} {canonical}
\newcommand{\cqis}{\mbox{\rm Q({\bf mTT})/$\cong$}}
\newcommand{\setis}{ {\mathsf{Set}_{mf}}_{/{\cong_c}}}
\newcommand{\fintd}{\textbf{In}_{D}}
\newcommand{\keywords}[1]{\textit{Keywords:}#1}
\begin{document}

\newgeometry{left= 3.2cm, right= 3.2cm}

\title{The Compatibility of the Minimalist Foundation with Homotopy Type Theory}

\author{Michele Contente \\ \footnotesize Scuola Normale Superiore, Pisa, Italy \\ \and Maria Emilia Maietti \\ \footnotesize Department of Mathematics, University of Padova, Italy}

\date{\today }

\maketitle

\begin{abstract}
The  Minimalist Foundation, \mf\ for short,  is a two-level foundation for constructive mathematics ideated by Maietti and Sambin in 2005 and then fully formalized by Maietti in 2009.
\mf\  serves as  a  common core  among  the most relevant foundations for  mathematics in the literature by choosing for each of them
 the appropriate level of \mf\  to be translated in a compatible way, namely by  preserving the meaning of logical and set-theoretical constructors.   The two-level structure
 consists of an intensional level, an extensional one, and an interpretation of the latter in the former  in order to extract intensional computational content  from mathematical proofs involving extensional constructions
 used in everyday mathematical practice.
 
 In 2013 a completely new foundation for constructive mathematics appeared in the literature, called Homotopy Type Theory, for short \hott,
 which is an example of Voevodsky's  Univalent Foundations with a computational nature.
 
 So far no level of \mf\ has been proved to be compatible with any of the  Univalent Foundations in the literature. Here we show that both levels of \mf\ are compatible with \hott.
 This result  is made possible thanks to the peculiarities of \hott\ which combines intensional features of type theory with extensional ones
 by assuming  Voevodsky's Univalence Axiom and higher inductive quotient types.
As a  relevant consequence, \mf\ inherits entirely new computable models.

\end{abstract}

\noindent

\keywords{
Foundations for Constructive Mathematics\ -  Dependent Type Theory\ - Homotopy Type Theory\ - Two-level foundations\ - Many-sorted logic}
\\
{\bf MSC classification}:  03838, 03F50, 03F03
\tableofcontents

\section{Introduction}
Constructive mathematics is distinguished from ordinary classical mathematics for developing proofs governed by a constructive way of reasoning which confers them
an algorithmic nature. In the literature there are foundations for constructive mathematics that are suitable to make this visible by allowing to view constructive proofs
as programs.  Examples of these foundations can be found in type theory and they include  Martin-L{\"o}f's intensional dependent type theory\cite{PMTT} and Coquand-Huet's Calculus of Constructions \cite{Coq88}.
However, there is no standard foundation for constructive mathematics, but a plurality of different approaches.

In 2005 in \cite{mtt} Maietti and Sambin embarked on the project of building a Minimalist Foundation, called \mf, to serve as a common core among the most relevant foundations
for constructive mathematics in type theory, category theory and axiomatic set theory.  Indeed, \mf\  is intended to be ``minimalist in set existence assumptions'' but ``maximalist in conceptual distinctions and compatibility with other foundations''.

To meet  this purpose,  \mf\ was conceived   as a two-level theory consisting of an extensional  level, called \emtt, formulated in a language close to that of everyday mathematical practice and
interpreted via a quotient model in a further intensional level, called \mtt,  designed as a type-theoretic base for a proof-assistant.
The key idea is that the two-level structure should allow the extraction of intensional computational contents  from constructive mathematical proofs involving extensional constructions
typical of usual  mathematical practice.

A complete two-level formal system for \mf\ was finally designed in 2009 in  \cite{m09}. There, some of  the most relevant constructive and classical foundations have been related to \mf\
by choosing the appropriate level of \mf\  to be translated  into it  in a {\it compatible} way, namely by  preserving the meaning of logical and set-theoretical constructors so that proofs of mathematical theorems in one theory are understood
as proofs of mathematical theorems in the target theory with the same meaning.

Moreover, computational models  for   \mf\  and its extensions with inductive  and coinductive topological definitions have been presented  in  \cite{peff}, \cite{IMMS},\cite{mmr21} and \cite{cind}
in the form of Kleene realizability interpretations which validate  the  Formal Church's Thesis stating that all the number-theoretic functions are computable.  

In 2013 the book \cite{hottbook}  presented a completely new foundation for constructive  mathematics,  called Homotopy Type Theory, for short \hott,
 as an example of Voevodsky's  Univalent Foundations, for short UFs. Voevodsky introduced UFs with the aim of  better formalizing  his mathematical work on abstract homotopy theory and higher category theory and at the same time fully checking the correctness of his proofs on a modern proof-assistant.
 
 More precisely,  \hott\ is an intensional type theory extending
  Martin-L\"of's theory as presented in \cite{PMTT} with the so-called Univalence Axiom proposed by  Voevodsky to   guarantee  that \lq\lq isomorphic'' structures can be treated as equal besides deriving  some other extensional principles.
  Another remarkable property of \hott\ is that it can be equipped with primitive higher inductive types, including set quotients (see \cite{hottbook}).

 The computational contents of \hott-proofs as programs have been recently explored with the introduction of cubical type theories in \cite{CCHM17,canhott} and a normalization procedure for a variant of them has been given in \cite{SA21}.

So far no level of \mf\ has been proved to be compatible with  Univalent Foundations.   
Here we show that both levels of \mf\ are compatible with \hott. 
This result  is made possible thanks to the peculiarities of \hott\ which combines intensional features of type theory with extensional ones
 by assuming  Voevodsky's Univalence Axiom and higher inductive quotient types. In particular,  we will crucially use the Univalence Axiom instantiated for homotopy propositions
 and   {\it function extensionality}. The fact that we can interpret both levels of \mf\  into a single framework is a remarkable property of \hott\, which is not shared by any other foundation for mathematics to our knowledge. 

In more detail, we interpret \mf-types as homotopy sets and \mf-propositions as h-propositions and both the  \mtt-collection
of small propositions and the \emtt-power collection of subsets  as the homotopy set of h-propositions in the first universe of \hott.

This should be contrasted with the relationship between \mf\ and the intensional version of Martin-L{\"o}f Type Theory, for short \mltt,  shown in  \cite{m09}: in \mltt\ we can interpret only the intensional level of \mf\ by identifying propositions with sets.

The main difficulty encountered in this work concerns the interpretation of the extensional level \emtt\ of \mf. Indeed, the  interpretation of  \mtt\ in \hott\
just required a careful handling of proof terms witnessing the fact that certain \hott-types are h-propositions and h-sets. Instead,  there is no straightforward way of interpreting \emtt\  in \hott,  because \emtt\ includes Martin-L\"of's extensional propositional equality in the style of \cite{ML84}.

We managed to solve this issue by employing a technique already used in \cite{m09} to interpret \emtt\ over the intensional level of \mf: \mbox{ \emtt -types} and terms are interpreted as \mbox{\hott -types} and terms up to a special class of isomorphisms, called {\it canonical} as in \cite{m09}, by providing a kind of
realizability interpretation in the spirit of  the interpretation of {\it true judgements}
in Martin-L{\"o}f's type theory described in \cite{ML84,sienlec}. 
We introduce the category $\setis$ of the h-sets contained in the  non-univalent universe $\setem$ (which is an inductive  universe of h-sets in the univalent universe $\suni$) equated under canonical isomorphisms and then we define an interpretation of \emtt-judgements into it.
In particular \emtt-type and term judgements are interpreted as \hott-type and term judgements up to canonical isomorphisms.
Furthermore,
the \emtt-definitional equality $A =B\  type \ [\Gamma]$ of two \emtt-types $A\ type \ [\Gamma]$
and $B\ type \ [\Gamma]$ is interpreted
as the existence of a canonical isomorphism that connects the \hott-type representatives interpreting the \emtt-types $A\ \ type \ [\Gamma]$
and $B\ type \ [\Gamma]$,  which turn out to be  propositionally equal in \hott\ thanks to Univalence. In turn, this interpretation is based on another auxiliary partial (multi-functional) interpretation of \emtt-raw syntax into \hott-raw syntax, which makes use of canonical isomorphisms. 

 
It must be stressed that the resulting interpretation of \emtt\ into \hott\ is  simpler than that of \emtt\ within \mtt\ in \cite{m09}, since we can avoid any quotient model construction thanks to (effective) set-quotients and Univalence. This interpretation turns out to be very similar to that presented in \cite{Oury05,WinST19} which makes effective the interpretation of extensional aspects of type theory into  an intensional base theory  originally presented in \cite{Hof95}. However, in \cite{Oury05,WinST19} there is a use of an heterogenous equality instead of canonical isomorphisms as in \cite{Hof95}. Moreover, the interpretation of \emtt\ into \mtt\  does not show the compatibility of \emtt\ with \mtt\, exactly because of the lack of Univalence and effective quotients in \mtt. 

Observe that it does not appear possible to identify  \lq \lq compatible'' subsystems of \hott\ corresponding to each level of \mf: in \hott\ the interpretation of the existential quantifier allows to derive both the axiom of  unique choice and the rule of unique choice as it happens in the internal logic of a topos like that described in \cite{M05},  while in each level of \mf\ these principles are not generally valid \cite{M17, MR16, MR13}, since the existential quantifier in \mf\ is defined in a primitive way.

As a   relevant consequence of the results presented here, both levels of  \mf\ inherit  new computable  models where constructive functions are seen as computable 
  as those  in \cite{SA21}  and 
  in  \cite{su22}.  We leave to future work to relate them with those available for \mf\ and in particular with the predicative variant of Hyland's Effective Topos
  in \cite{peff}.


\section{Preliminaries about \mf\ and \hott}

In this section we recall some basic facts about \mf\ and \hott\ that will turn out to be useful later. We will refer mainly to \cite{m09} for \mf\ and to \cite{hottbook} for \hott. 

\subsection{The two levels of \mf}
\mf\  is a two-level foundation for constructive mathematics, that was first conceived in \cite{mtt} and then fully developed in \cite{m09}.
It consists of an intensional level, called \mtt,  and an extensional one, called \emtt, together with an interpretation of the latter in the first. Both levels of \mf\ extend a version of Martin-L{\"o}f's type theory with a primitive notion
of proposition: \mtt\ extends the intensional type theory in \cite{PMTT}, while \emtt\ extends the extensional version presented in \cite{ML84}.

The resulting two-level theory is strictly predicative in the sense of Feferman as first shown in \cite{peff}.

A peculiarity of  \mf\  with respect to Martin-L\"of's type theories is that types at each level of \mf\  are built by using four basic distinct sorts: small propositions, propositions, sets and collections. The relations between these sorts are shown on the following diagram where the inclusion mimics a subtyping relation:

\begin{center}
$$
\xymatrix@C=0.5em@R=0.5em{
 {\mbox{\bf small propositions}} \ar@{^{(}->}[dd]\ar@{^{(}->}[rr]&  &
 {\mbox{\bf sets}} \ar@{^{(}->}[dd] \\ 
&&\\
  {\mbox{\bf propositions}} \ar@{^{(}->}[rr] && {\mbox{\bf collections}}
}
$$
\end{center}
\vspace{1.0em}
In particular, the distinction between sets and collections is meant to recall that between sets and classes in axiomatic set theory,  while the word \lq\lq small'' attached to propositions is
taken from algebraic set theory \cite{jm95}. Indeed,  small propositions are defined as those propositions that are closed
under intuitionistic connectives and quantifiers and whose equalities are restricted to sets.

More formally, the basic forms of judgement in \mf\ include
\begin{center}
    
 $ A \ set\ [\Gamma] \qquad B\ coll\ [\Gamma] 
\qquad \phi\ prop\
 [\Gamma]\qquad \psi\ prop_s\
 [\Gamma] 
 $
\end{center}
 to which we add the meta-judgement
$$A\ \type\ [\Gamma]$$
where `\type ' is to be interpreted as a meta-variable ranging over the four basic sorts.

We warn the reader that the  type constructors of both levels of  \mtt\ and \emtt\ are respectively defined in an inductive way mutually involving all the four sorts, i.e.
we can not give a definition of  collections independently from that of sets or  propositions or  small propositions  and the same holds for  the definition of each of these sorts.

The set-constructors of \mtt\ and \emtt\  include those of first order  Martin-L{\"o}f's type theory, respectively  as presented in \cite{PMTT} and \cite{ML84},  together with list types.
We just recall their notation:  $N_0$ stands for the empty set,
$N_1$ for the singleton set, $List(A)$ for the set of lists over the set $A$,
$\Sigma_{x\in A} B(x)$  and  $\Pi_{x\in A} B(x)$ stand respectively for the indexed sum  and the dependent product of the family of sets $B(x)\ set \ [x\in A]$ indexed on the set $A$,  $A+B$ for
the disjoint sum of the set $A$ with the set $B$.  

Moreover, sets of \emtt\  are distinguished from those of \mtt, because they are closed under effective quotients
$A/R$ on a set $A$, provided that $R $ is a small equivalence relation $R(x,y) \
prop_s\ [x\in A,y\in A]$. 

In addition,   both the sets of \mtt\ and  those of \emtt\ include also  their small propositions $\phi\  prop_s$ thought as sets of their proofs.

Moving now to describe collections of \mtt\ and \emtt, we recall that  they both  include their sets and the indexed sum  $\Sigma_{x\in A} B(x)$  of the family of collections $B(x)\  coll \ [x\in A]$ indexed on a
collection $A$.
But,  whilst  \mtt-collections  include the proper collection of small propositions $\mathsf{prop_s}$ and  
the collection of small propositional functions  $A\rightarrow  \mathsf{prop_s}$ over a set $A$ (which are definitely not sets predicatively when $A$ is not empty!),
 the collections of  \emtt\  include the power-collection of the singleton  $ {\cal P}(1)$, that is the quotient of the collection of small propositions under the relation of equiprovability,  and  
the power-collection $A\rightarrow  {\cal P}(1)$ of a set $A$, that can be written simply as ${\cal P}(A)$. 

In addition, both collections of  \mtt\ and  those of \emtt\ include propositions $\phi\  prop$ viewed as collections of their proofs.

Both  propositions of \mtt\  and of \emtt\  include  propositional connectives and  quantifiers according to the following grammar: for $\phi$ and $\psi$ generic propositions,  $ \phi\land\psi$ denotes the conjunction, 
$\phi\lor\psi$ the disjunction,  $\ \phi\rightarrow \psi$ the implication,  $\forall x\in A. \phi$ the universal quantification and
 $\exists x\in A. \phi $ the existential quantification,
 for any collection $A$. Finally,   \mtt-propositions include a propositional equality type between terms of a type $A$,  called \lq\lq intensional
 propositional equality'',  that is  denoted with the type
 $$\mathsf{Id}(A,a,b)
$$
%
%
 since it has the same rules as   Martin-L{\"o}f's intensional identity type  in \cite{PMTT}
except that its elimination rule is  {\it restricted} to act towards propositions only (see \cite{m09}).
Instead,  \emtt-propositions include an extensional propositional equality between terms of a type $A$ that is denoted with the type
 $$\mathsf{Eq}(A,a,b)
$$
since it has the same rules as the  propositional equality type  in \cite{ML84}  and thus its elimination rule is given by the so-called {\it reflection rule}.

Furthermore,  propositions of \emtt\ are assumed to be  {\it proof-irrelevant} by imposing that if  a proof of a proposition exists, this is  {\it unique} and equal to a canonical proof term called
$\mathsf{true}$. These facts are represented by the following rules

\begin{center}
    $$
\mbox{{\bf prop-mono}) }  \,\displaystyle{ \frac{ \displaystyle \phi\ prop\ [\Gamma]\qquad p\in \phi\  [\Gamma] \quad q\in \phi\  [\Gamma]}{\displaystyle p=q\in \phi\ \ [\Gamma]}}  
\qquad
\begin{array}{l}
\mbox{{\bf prop-true}) }  \,\displaystyle{ \frac{ \displaystyle \phi\ prop\ \qquad p\in \phi\  }
{\displaystyle \mathsf{true}\in \phi}} 
\end{array}
$$
\end{center}
\vspace{1.0em}

Finally, both in \mtt\ and in \emtt \ {\it small propositions} are defined as  those propositions closed under propositional connectives, quantifications over sets
and propositional equality over a set. For example, in \mtt\ (resp. in \emtt)   the propositional equality $\mathsf{Id}(A,a,b)$ (resp. $\mathsf{Eq}(A,a,b) $) and the quantifications
  $\forall\, x\in A. \phi$
or $\exists\, x\in A. \phi$ are all small propositions if $A$ is a set and $\phi$ is a small proposition, too.

\begin{remark}\label{ac}
It is important to stress that elimination of propositions in \mtt\ as well as in \emtt\ acts only toward propositions and {\it not} toward proper sets and collections. In this way, \mtt\ and \emtt\
do not generally validate  choice principles, including unique choice, thanks to the results in \cite{M17, MR16, MR13}, and  similarly to what happens in the Calculus of Constructions, as first shown in
\cite{st92}.
\end{remark}

Observe that in \mtt\  term congruence rules are replaced by an explicit substitution rule for terms: 

\begin{center}
    \noindent
$$
\begin{array}{l}
      \mbox{repl)} \ \
\displaystyle{ \frac
         { \displaystyle 
\begin{array}{l}
 c(x_1,\dots, x_n)\in C(x_1,\dots,x_n)\ \
 [\, x_1\in A_1,\,  \dots,\,  x_n\in A_n(x_1,\dots,x_{n-1})\, ]   \\[2pt]
a_1=b_1\in A_1\ \dots \ a_n=b_n\in A_n(a_1,\dots,a_{n-1})
\end{array}}
         {\displaystyle c(a_1,\dots,a_n)=c(b_1,\dots, b_n)\in
 C(a_1,\dots,a_{n})  }}
\end{array}     
$$
\end{center}
\vspace{1.0em}

As a consequence, the $\xi$-rule for dependent products is no more available. This modification is crucial in order to obtain a sound Kleene-realizability interpretation for \mtt\
as required in \cite{mtt} and shown  in \cite{IMMS, mmr21,cind}\footnote{The issue of the relation between the $\xi$-rule and Kleene-style realizability was first spotted in \cite{ML75} and is discussed also in \cite{IMMS}.}.

Finally,  in order to make the interpretation of \mtt\ into \hott\ smoother,  differently from the version of \mtt\ presented in \cite{m09}, we encode small propositions into the collection of small propositions via an operator as follows:

\begin{center}
    {\small
$\begin{array}{lll} 
\mbox{Pr$_1$) }\ \widehat{\bot} \in \mathsf{prop_s} &
\mbox{Pr$_2$) }\displaystyle{ \frac
         { \displaystyle  p\in  \mathsf{prop_s} \hspace{.3cm} q \in
         \ \mathsf{prop_s}}
         {\displaystyle p \,\widehat{\vee}\, q\in \mathsf{prop_s} }} &
\mbox{Pr$_3$) }\displaystyle{ \frac
       {\displaystyle p\in
        \mathsf{prop_s} \qquad q\in
        \mathsf{prop_s} }
{p\,\widehat{\rightarrow}\,q \in \mathsf{prop_s} }}\\[15pt]
\mbox{Pr$_4$) }\displaystyle{ \frac
         { \displaystyle  p   \in \mathsf{prop_s}\qquad  q\in
           \mathsf{prop_s} }
         {\displaystyle  p\,\widehat{\wedge}\, q\in  \mathsf{prop_s} }}
&
\mbox{Pr$_5$) }\displaystyle{ \frac
{\displaystyle p \in \mathsf{prop_s}\ [x\in A]\qquad A\ set  }
{\displaystyle \widehat{(\forall x\in A)}\, p \in \mathsf{prop_s} }} 
&
\mbox{Pr$_6$) }\displaystyle{ \frac
         { \displaystyle   p\in
         \mathsf{prop_s}\ [x\in A]\qquad A\  set}
         {\displaystyle  \widehat{(\exists x\in A)}\, p\in
\mathsf{prop_s}}} \\[15pt]
&\mbox{Pr$_7$) }\displaystyle{ \frac
{\displaystyle  A \hspace{.1cm} set \hspace{.3cm}  a\in A \hspace{.3cm} b\in A}
         {\displaystyle \widehat{\mathsf{Id}}(A, a, b) \in \mathsf{prop_s} }}
&
\end{array}
$}
\end{center}
\vspace{1.0em}

Therefore, elements of the collection of small propositions can be decoded 
as small propositions by means of a decoding operator as follows

\begin{center}
    {\small
\noindent
$
\mbox{$\tau$-Pr) }\displaystyle{ \frac
       {\displaystyle p\in
        \ \mathsf{prop_s} }
{ \tau(p) \ prop_s }}
$
\\
}
\end{center}
\vspace{1.0em}

\noindent
and this operator satisfies the following definitional equalities:

\begin{center}
    {\small
$\begin{array}{ll} 
\mbox{eq-Pr$_1$) }\ \tau(\widehat{\bot})= \bot\, prop_s &\mbox{eq-Pr$_2$) }\displaystyle{ \frac
         { \displaystyle  p\in  \mathsf{prop_s} \,\,\, q \in
          \mathsf{prop_s}}
         {\displaystyle \tau( p\, \widehat{\vee}\, q)= \tau(p) \vee \tau(q)\,prop_s}}\\[15pt]
 \mbox{eq-Pr$_3$) }\displaystyle{ \frac {\displaystyle p\in
      \mathsf{prop_s} \qquad q\in
         \mathsf{prop_s} }
 {\tau(p\,\widehat{\rightarrow}\, q)= \tau(p)\rightarrow \tau(q)\,prop_s }} &\mbox{eq-Pr$_4$) }\displaystyle{ \frac
         { \displaystyle  p   \in \mathsf{prop_s}\,\,\,  q\in
           \mathsf{prop_s} }
         {\displaystyle  \tau(p\,\widehat{\wedge}\, q)=\tau(p) \wedge\tau(q)\, prop_s}}
\\[12pt]
\mbox{eq-Pr$_5$) }\displaystyle{ \frac
{\displaystyle p \in \mathsf{prop_s}\ [x\in A]\qquad A\ set  }
{\displaystyle \tau(\widehat{(\forall x\in A)} \,p)= (\forall x\in A) \,\tau(p) \, prop_s}} 
  &
\mbox{eq-Pr$_6$) }\displaystyle{ \frac
         { \displaystyle   p
         \in \mathsf{prop_s}\ [x\in A]\qquad A\  set}
         {\displaystyle \tau( \widehat{(\exists x\in A)}\, p)=
(\exists x\in A)\, \tau(p)\, prop_s }} \\[15pt]
\mbox{eq-Pr$_7$) }\displaystyle{ \frac
{\displaystyle  A \hspace{.1cm} set \hspace{.3cm}  a\in A \hspace{.3cm} b\in A}
         {\displaystyle \tau(\, \widehat{\mathsf{Id}}(A, a, b)\,)=
\mathsf{Id}(A,a,b)\,prop_s    }} 
\end{array}
$}
\end{center}
\vspace{1.2em}

A link between \mtt\ and \emtt\ is shown in \cite{m09} by interpreting  \emtt\ within a quotient model over \mtt. Such a quotient model was related to a free quotient completion construction in \cite{MR13}. Roughly speaking, thanks to the interpretation in \cite{m09},
\emtt\ types are seen as quotients of the corresponding intensional \mtt-types and thus \emtt\ can be regarded as a fragment of a quotient completion of the intensional level.  

More specifically, the interpretation of \emtt\ in \mtt\ relies upon the definition of a particular class of isomorphisms called {\it \can\ isomorphisms}, between dependent quotient types over \mtt, similar to  so called  {\it dependent setoids}.  It must be underlined that the idea of using canonical isomorphisms to interpret extensional aspects of type theory into intensional type theory  in \cite{m09}  was predated by M. Hofmann's  work  in \cite{Hof95} with the main difference that the target theory in \cite{Hof95} is not a pure intensional type theory as  in \cite{m09} where a setoid model is used.  Moreover, Hofmann's interpretation is not effective because of the use of the axiom of choice in the meta-theory.
 The interpretation in \cite{m09} is closer to the effective translation  presented in \cite{Oury05,WinST19} which refined Hofmann's one by employing a notion of heterogeneous equality.

Through this class of isomorphisms it is possible
to define a category of quotients over \mtt\  up to canonical isomorphisms within which to interpret \emtt\ correctly.

We underline that the interpretation of \emtt\ within
\mtt\ for some relevant constructors has been implemented and verified in \cite{FioriC18}.

Our main task  in this paper is to show the compatibility of each level of \mf\  with Homotopy Type Theory in \cite{hottbook}. For this purpose we make explicit the notion of compatibility between theories implicit in \cite{mtt} 
by stating that {\it a theory  $\bf T_1$ is said to be  {\bf  compatible } with another theory $\bf T_2$}  if and only if {\it  there exists a translation from  $\bf T_1$ to  $\bf T_2$ 
preserving the meaning of logical and set-theoretical constructors}
so that proofs of mathematical theorems in one theory are understood
as proofs of  mathematical theorems in the target  theory with the same meaning.

\subsection{Useful properties of \hott}
In 2013,  with the appearance of the book \cite{hottbook},  a completely new foundation for constructive mathematics  showed up under the name of 
Homotopy Type Theory, for  short \hott.  It was  introduced as an example of Voevodsky's Univalent Foundation with the remarkable property of combining intensional features of type theory with extensional ones.
Indeed, it  extends Martin-L{\"o}f's intensional type theory, for short \mltt, in \cite{PMTT} with  Voevodsky's $\textit{Univalence Axiom}$
and higher inductive types, including quotients of  homotopy sets and propositional truncation.

As a consequence, the  first order types of  \hott\ are the same as those of \mltt\ and therefore of the intensional level \mtt\ of \mf.
For the sake of clarity, we denote these types in \hott\  following \cite{hottbook}:  the empty type 
is denoted with $\textbf{0}$, the unit type with  $\textbf{1}$, the list type  constructor  with $\mathsf{List}$, the dependent product type  constructor with $\Pi$, the dependent sum constructor with $\Sigma$ and the sum type constructor with $+$. Further, we recall the notation of the following higher inductive types: propositional truncation is denoted with $\vert\vert A\vert\vert$ for any type $A$ and quotients  with $A/R$ for any homotopy set $A$ and an equivalence relation $R$. As usual,  the special cases of the type constructors $\Pi$ and $\Sigma$, when $B$ does not depend on $A$, are respectively denoted by $\rightarrow$ and $\times$. 

Voevodsky's $\textit{Univalence Axiom}$ states that  

\begin{center}
    $\mathrm{(UA)}\qquad \mbox{the map}\ \mathsf{idtoeqv}:(A=_{\mathcal{U}_i}B)\rightarrow (A\simeq B) $ is an equivalence
\end{center}
where \lq $\simeq$' denotes the type of $\textit{equivalences}$  and $\mathsf{idtoeqv}$ is the function which from a proof of equality of two types in the same universe $\mathcal{U}_i$,  for some index $i$, produces an equivalence, all as defined  in \cite{hottbook}.

 This in turn implies 

\begin{equation*}
(A=_{\mathcal{U}} B)\simeq (A\simeq B)
\end{equation*}

\noindent
We recall from \cite{hottbook} the following notations and definitions which characterize
h-sets and h-propositions by singling out some proof-terms (it does not matter which they are, it only matters that we can single out some of them!) proving the statements which will be used in the next sections:

$$\mathsf{isSet}(A) \ourdef \Pi_{x,y:A} \ \Pi_{p,q:x=_{A}y} \ p=_{Id_{A}}q  \qquad \qquad \mathsf{isProp}(A) \ourdef \Pi_{x,y:A}\  x=_{A}y $$

\begin{definition}
 A type $A$  is an h-proposition  if $\isprop(A) $ is provable in \hott.
\end{definition}

\begin{definition}\label{setdef}
A type $A$ is an h-set if $\mathsf{isSet}(A)$ is provable in \hott.
\end{definition}

\begin{lemma}\label{id}
If $A$ is an h-set, then $\mathrm{Id}_{A}$ is an h-proposition, i.e. there exists a proof-term

\begin{center}
    $\mathfrak{p}_{Id}: \Pi_{A:\guni}\ \Pi_{s: \isset(A)}\ \Pi_{a,b:A}\  \isprop(\mathrm{Id}_{A}(a,b))$
\end{center}

\end{lemma}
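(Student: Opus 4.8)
The plan is to build the witness $\mathfrak{p}_{Id}$ by plain $\Pi$-introduction, after which the goal becomes definitionally identical to an instance of the hypothesis that $A$ is an h-set; I expect that no appeal to Univalence, propositional truncation or any other higher-inductive structure will be required here.

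Concretely, I would first abstract the four arguments in the telescope of the statement: by four successive $\Pi$-introductions I introduce $A : \guni$, a proof-term $s : \isset(A)$, and two points $a,b : A$, reducing the task to exhibiting a term of type $\isprop(\mathrm{Id}_A(a,b))$ in this context.

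The key step is to observe that the two relevant definitions unfold to one and the same $\Pi$-type. Unfolding $\isprop$ and recalling that $\mathrm{Id}_A(a,b)$ is the path type $a =_A b$, the goal is definitionally $\Pi_{p,q:a=_A b}\,(p =_{a=_A b} q)$. Unfolding $\isset$, the hypothesis $s$ has type $\Pi_{x,y:A}\,\Pi_{p,q:x=_A y}\,(p =_{x=_A y} q)$, so instantiating its first two arguments with $a$ and $b$ yields $s\,a\,b$ of type $\Pi_{p,q:a=_A b}\,(p =_{a=_A b} q)$, which is precisely the unfolded goal. I would therefore set
$$\mathfrak{p}_{Id}\ourdef \lambda A.\,\lambda s.\,\lambda a.\,\lambda b.\ s\,a\,b,$$
and type-checking should succeed because the declared codomain and the type of $s\,a\,b$ reduce to the same expression.

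I do not anticipate a genuine obstacle: the lemma is essentially the definitional repackaging of \emph{being an h-set} as \emph{all path spaces are h-propositions}, in line with \cite{hottbook}. The only delicate points are bookkeeping ones, namely checking that the subscript $Id_A$ in the definition of $\isset$ is to be read as the identity type at the instantiated endpoints $x:=a,\ y:=b$, so that the two $\Pi$-types agree judgementally, and keeping track of which universe-level arguments are implicit and which explicit when the outer $\Pi_{A:\guni}$ is discharged.
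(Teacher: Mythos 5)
Your proof is correct: unfolding the paper's definitions, $\isset(A)$ instantiated at $x:=a$, $y:=b$ is judgementally the type $\isprop(\mathrm{Id}_A(a,b))$, so $\mathfrak{p}_{Id}\ourdef\lambda A.\,\lambda s.\,\lambda a.\,\lambda b.\ s\,a\,b$ type-checks as required. The paper states this lemma without giving a proof (deferring, as with the neighbouring lemmas, to \cite{hottbook}), and your definitional-unfolding argument is exactly the intended one.
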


Since h-levels are cumulative (see Thm 7.1.7 \cite{hottbook}), in particular the following holds: 

\begin{lemma}\label{propintoset}
Every h-proposition is an h-set: i.e. there exists a proof-term

\begin{center}
    $\mathfrak{s}_{coe} : \Pi_{A:\mathcal{U}_{i}}\ \isprop(A)\rightarrow\isset(A)$.
\end{center}
\end{lemma}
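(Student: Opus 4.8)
The plan is to construct the proof-term directly, observing that the statement is the lowest instance of the cumulativity of h-levels (Thm 7.1.7 of \cite{hottbook}) and in fact coincides with Lemma 3.3.4 of \cite{hottbook}: since an h-proposition is a $(-1)$-type and an h-set is a $0$-type, it suffices to turn a witness of $\isprop(A)$ into a witness of $\isset(A)$. Unfolding the definitions, I am given $A:\guni$ and $f:\isprop(A)$, that is $f:\Pi_{x,y:A}\ x=_{A}y$, and I must produce, for arbitrary $x,y:A$ and $p,q:x=_{A}y$, an inhabitant of $p=_{Id_{A}}q$.

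First I would fix $x:A$ and set $g\ourdef f(x):\Pi_{y:A}\ (x=_{A}y)$, thought of as a uniform choice of a path from $x$ to every point of $A$. The crucial step is the auxiliary claim that for all $y,z:A$ and every $p:y=_{A}z$ one has $g(y)\cdot p = g(z)$, where $\cdot$ denotes path concatenation. This follows by path induction on $p$: for $p$ equal to $\mathsf{refl}_{y}$ the goal reduces to the right-unit law $g(y)\cdot\mathsf{refl}_{y}=g(y)$. Equivalently, this is the term $\mathsf{apd}_{g}(p)$ combined with the computation of transport in an identity type (Lemma 2.11.2 of \cite{hottbook}).

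From this equation I would cancel $g(y)$ on the left, using the groupoid laws for paths (unit, inverse and associativity), to obtain $p = g(y)^{-1}\cdot g(z)$. The essential observation is that the right-hand side is entirely independent of $p$. Hence, given any two parallel paths $p,q:y=_{A}z$, both are equal to $g(y)^{-1}\cdot g(z)$, and concatenating these two equalities yields the required term of type $p=_{Id_{A}}q$. Abstracting over $x,y$ and over $p,q$, and then over $A$ and $f$, produces the desired term $\mathfrak{s}_{coe}:\Pi_{A:\guni}\ \isprop(A)\rightarrow\isset(A)$.

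The main obstacle is the auxiliary claim $g(y)\cdot p=g(z)$; everything else is routine manipulation of the groupoid structure of identity types. Conceptually this is the \lq\lq based path space is contractible'' trick specialized to a type all of whose path spaces are already contractible. I do not expect to need Lemma \ref{id} here, since that concerns the opposite implication, namely that an h-set makes $\mathrm{Id}_{A}$ an h-proposition.
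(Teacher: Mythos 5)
Your construction is correct and is precisely the standard argument (Lemma 3.3.4 of the HoTT book, the base case of the cumulativity Theorem 7.1.7) that the paper itself invokes without spelling out a proof. The key step $g(y)\cdot p = g(z)$ via $\mathsf{apd}_g$ and transport in the identity type, followed by cancellation, is exactly how that cited result is established, so there is nothing to add.
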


Now we recall the notion of isomorphism between two h-sets:

\begin{definition}[Isomorphism between h-sets]\label{isomor}
Given two h-sets $A$ and $B$, a function $f:A\rightarrow B$ in \hott\  is an isomorphism if there exists $g: B\rightarrow A$ such that we can prove
\begin{equation*}
     \Pi_{x:A}\ \mathrm{Id}_{A}(g(f(x)), x)\ \times \ \Pi_{y:B}\ \mathrm{Id}_{B}(f(g(y)), y)
\end{equation*}

\end{definition}

We also recall the rules of the  {\it propositional truncation $\vert\vert A\vert\vert$  of a type $A$} 
 given in \cite{proptrunc}:
$\vert\vert A\vert\vert$ is a higher inductive type generated from the 
the  following two introductory constants
$$| -|: A\rightarrow \vert\vert A\vert\vert\qquad\qquad \mathsf{sq}_{A}:\Pi_{x,y:\vert\vert A\vert\vert }\  x=_{\vert\vert A\vert\vert}y  $$
by means of the elimination constructor:
$$
\mbox{E-$\vert\vert\ \vert\vert$ }\displaystyle{ \frac
         { \displaystyle  C: {\cal U}_i \ type \qquad e :\vert\vert A\vert\vert \qquad   c: C \ [x: A]  \qquad s: \Pi_{x,y:C}\  x=_{C}y  }
         {\displaystyle  ind_{\vert\vert A\vert\vert} (e,c,s) : \vert\vert A\vert\vert \rightarrow C}}
$$
satisfying the definitional equality rule
$$
\mbox{C-$\vert\vert\ \vert\vert$ }\displaystyle{ \frac
         { \displaystyle  C: {\cal U}_i \ type \qquad a : A \qquad   c: C \ [x: A]  \qquad s: \Pi_{x,y:C}\  x=_{C}y  }
         {\displaystyle  ind_{\vert\vert A\vert\vert} (|a|, c,s) \equiv c(a): C}}
$$

The presence of propositional truncation makes possible to represent logical notions  in a way alternative to the propositions-as-types paradigm by using h-propositions in a way similar to what happens in the internal dependent type theory  of a topos or of a regular theory as described in \cite{M05}.

In more detail,  in \hott\  the constant falsum $\perp$ is identified with {\bf 0}, the propositional  conjunction symbol   $\land$ with $\times$,  the universal quantifier symbol $\forall$ with $\Pi$, thanks to the following lemma derived from  \cite{hottbook}:

\begin{lemma}\label{firstcon}
The empty type {\bf 0} and the unit type {\bf 1} are h-propositions. Further, h-propositions are closed under $\times$ and $\Pi$ (and thus also $\rightarrow$), i.e.
 there exists the following proof-terms 
 
 \vspace{1.0em}
\begin{tabular}{l}
    $\mathfrak{p}_1: \isprop(\textbf{1})$\qquad $\mathfrak{p}_0: \isprop(\textbf{0})$\\[5pt]

    $\mathfrak{p}_\rightarrow: \Pi_{A,B: \guni}\ \Pi_{q :\isprop(B)}\ \isprop(A\rightarrow B)$\\[5pt]
    
    $\mathfrak{p}_\times: \Pi_{A,B: \guni}\ \Pi_{p:\isprop(A), q:\isprop(B)}\ \isprop(A\times B)$\\[5pt]
  
    $\mathfrak{p}_\Pi: \Pi_{A:\guni}\ \Pi_{B: A\rightarrow \guni}\ \Pi_{p: \Pi_{x:A} \isprop(B(x))} \isprop(\Pi_{x:A}B(x))$\\[5pt]

     $\mathfrak{p}_{\vert\vert\ \vert\vert}: \Pi_{A:\guni} \isprop (\vert\vert A \vert\vert)$\\[5pt]
     \end{tabular}
\end{lemma}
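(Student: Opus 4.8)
The plan is to establish the six proof-terms one at a time, since each clause reduces to a short construction once the definition $\isprop(A)\ourdef\Pi_{x,y:A}\ x=_A y$ is unfolded. The two unit-type cases and the truncation case are essentially immediate, the product case uses the standard characterization of paths in a product, and the only place where a genuine extensionality principle enters is the dependent-product clause.

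First I would dispatch $\textbf{0}$, $\textbf{1}$ and the truncation. For $\textbf{0}$, given $x,y:\textbf{0}$ the eliminator of the empty type already produces a term of any type from $x$, so $\mathfrak{p}_0\ourdef\lambda x.\,\mathsf{rec}_{\textbf{0}}(\dots)$ --- effectively \emph{ex falso} --- inhabits $\Pi_{x,y:\textbf{0}}\ x=_{\textbf{0}}y$. For $\textbf{1}$, applying the induction principle of the unit type twice reduces the goal $\Pi_{x,y:\textbf{1}}\ x=_{\textbf{1}}y$ to the single case $x\equiv\star$, $y\equiv\star$, which is witnessed by $\mathsf{refl}_\star$. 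For the truncation, the claim is literally one of the path-constructors recalled above: $\mathsf{sq}_A:\Pi_{x,y:\vert\vert A\vert\vert}\ x=_{\vert\vert A\vert\vert}y$ is by definition a proof of $\isprop(\vert\vert A\vert\vert)$, so $\mathfrak{p}_{\vert\vert\ \vert\vert}\ourdef\lambda A.\,\mathsf{sq}_A$.

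Next, for the product I would use the equivalence $(w=_{A\times B}w')\simeq(\fpr w=_A\fpr w')\times(\spr w=_B\spr w')$ from \cite{hottbook}. Given hypotheses $p:\isprop(A)$ and $q:\isprop(B)$ and two elements $w,w':A\times B$, I obtain the components $p(\fpr w)(\fpr w')$ and $q(\spr w)(\spr w')$ and transport their pairing back along this equivalence to a path $w=_{A\times B}w'$; abstracting over $A,B,p,q,w,w'$ yields $\mathfrak{p}_\times$.

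The crux is the dependent-product clause, and here I expect the main obstacle to be the need for function extensionality. Given $A:\guni$, $B:A\rightarrow\guni$ and $p:\Pi_{x:A}\ \isprop(B(x))$, together with two functions $f,g:\Pi_{x:A}B(x)$, the hypothesis $p$ supplies the pointwise family $\lambda x.\,p(x)(f(x))(g(x)):\Pi_{x:A}\ f(x)=_{B(x)}g(x)$. To convert this pointwise equality into a single identification $f=g$ of the functions themselves I would apply \emph{function extensionality}, which in \hott\ is not an extra axiom but a consequence of the Univalence Axiom (see \cite{hottbook}); this produces $\mathfrak{p}_\Pi$. Finally, $\mathfrak{p}_\rightarrow$ is obtained from $\mathfrak{p}_\Pi$ by instantiating the type family to the constant one $\lambda x.\,B$ and the pointwise hypothesis to $\lambda x.\,q$. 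No step here is computationally deep; beyond unfolding definitions the only conceptual input is the derivability of function extensionality from Univalence, which is precisely one of the extensional principles of \hott\ highlighted in the introduction.
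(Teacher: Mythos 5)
Your proposal is correct and follows exactly the standard constructions that the paper's proof defers to by citing Chapter 3 of \cite{hottbook}: \emph{ex falso} for $\textbf{0}$, unit induction for $\textbf{1}$, the path constructor $\mathsf{sq}_A$ for the truncation (which the paper itself takes as the definition of $\mathfrak{p}_{\vert\vert\ \vert\vert}$ immediately after the lemma), the characterization of paths in a product for $\mathfrak{p}_\times$, and function extensionality for $\mathfrak{p}_\Pi$ with $\mathfrak{p}_\rightarrow$ as its non-dependent instance. You have simply supplied the details the paper leaves to the cited reference, including the correct observation that the $\Pi$-clause is the one place requiring function extensionality.
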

\begin{proof} See Chapter III in \cite{hottbook}. 
\end{proof}

Thanks to the notation introduced above we can define
$$\mathfrak{p}_{\vert\vert\ \vert\vert}\ourdef \lambda A. \mathsf{sq}(A)$$

Moreover, since h-propositions are not closed under $\Sigma$ and $\mathsf{+}$ (e.g. $\textbf{1}+\textbf{1}$ is not a h-proposition), we need to apply propositional truncation to define disjunction and existential quantification exactly as it happens in the internal dependent type theory of a topos \cite{M05}:  $P \vee Q$ is identified with $\vert\vert P + Q \vert\vert$ and $\exists_{x\in A}\ P(x)$ with $\vert\vert \Sigma_{x:A}\ P(x)\vert\vert$.

We recall introduction and elimination rules of disjunction
and existential quantifiers as defined in \hott\ to fix the notation and recall some properties:

\begin{definition}\label{disj}
The disjunction of h-propositions $P$ and $Q$ is defined as
$$P\vee Q\ourdef \vert\vert P + Q \vert\vert$$ 
Its canonical  introductory constructors are defined as follows: for $p: P$ and $q: Q$
$$\mathsf{inl}_\vee (p)\ourdef \vert \mathsf{inl}(p)\vert : P\vee Q  \qquad \mathsf{inr}_\vee (q)\ourdef \vert \mathsf{inr}(q)\vert: P\vee Q $$
and its  eliminator constructor is defined as follows:
for any $C$ such that $s:\isprop(C)$ , any $e: P\vee Q $ and any
$l_1(x) :  C\ [x: P]$ and $l_2(y) : C\ [y:Q]$ 
    $$\mathsf{ind}_{\vee }(e,x. l_1(x) ,\ y. l_2(y), s\ ) \ourdef \mathsf{ind}_{\vert\vert\  \vert\vert}   (e\, ,\,  z.  \mathsf{ind}_{+} (z,\, x. l_1(x) \, ,\, y. l_2(y) \ ), s\ ):C $$
\end{definition}

The disjunction as defined above satisfies the usual $\beta$-definitional equalities:
\begin{lemma}\label{betadisj}
The disjunction defined in def. \ref{disj} satisfies the  following  $\beta$ definitional equalities:
      for any $C$ such that $s: \isprop(C)$, any $p: P$ and $q:Q$,  and any
$l_1(x) :  C\ [x: P]$ and $l_2(y) : C\ [y:Q]$  it holds in \hott\ 
    $$\mathsf{ind}_{\vee }(\mathsf{inl}_\vee (p),x.l_1(x)\, , y.l_2(y), s)\equiv l_1(p): C\qquad
    \mathsf{ind}_{\vee}(\mathsf{inr}_\vee (q),x. l_1(x)\, , y.l_2(y), s)\equiv l_2(q): C$$
    \end{lemma}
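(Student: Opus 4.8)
The plan is to prove each $\beta$-equality by a direct chain of definitional equalities: I would unfold the disjunction eliminator $\mathsf{ind}_\vee$ as given in Definition \ref{disj}, and then apply in succession the two computation rules already available, namely the truncation computation rule C-$\vert\vert\ \vert\vert$ recalled above and the standard coproduct $\beta$-rule for $+$ inherited from \mltt. The two equalities are symmetric, so I would prove the left one in full and obtain the right one by replacing $\mathsf{inl}$ with $\mathsf{inr}$ throughout.

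First I would unfold $\mathsf{inl}_\vee(p)$ as $|\mathsf{inl}(p)|$ and $\mathsf{ind}_\vee$ according to its definition, rewriting the left-hand side as
$$\mathsf{ind}_{\vert\vert\ \vert\vert}\bigl(|\mathsf{inl}(p)|,\ z.\,\mathsf{ind}_{+}(z,\, x.l_1(x),\, y.l_2(y)),\ s\bigr).$$
All the data required to form this term are in place: the hypothesis $s:\isprop(C)$ is exactly the side condition $s:\Pi_{x,y:C}\,x=_{C}y$ demanded by the truncation eliminator E-$\vert\vert\ \vert\vert$, while $\mathsf{ind}_{+}(z,x.l_1(x),y.l_2(y))$ is a well-typed term of $C$ for $z:P+Q$, built from the premises $l_1(x):C\,[x:P]$ and $l_2(y):C\,[y:Q]$. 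Next I would apply C-$\vert\vert\ \vert\vert$ with motive $c(z)\ourdef\mathsf{ind}_{+}(z,x.l_1(x),y.l_2(y))$ and point $a\ourdef\mathsf{inl}(p)$, which collapses the outer truncation eliminator to $c(\mathsf{inl}(p))=\mathsf{ind}_{+}(\mathsf{inl}(p),x.l_1(x),y.l_2(y))$. Applying the coproduct $\beta$-rule to this then yields $l_1(p)$ definitionally, completing the chain
$$\mathsf{ind}_\vee(\mathsf{inl}_\vee(p),\, x.l_1(x),\, y.l_2(y),\, s)\ \equiv\ \mathsf{ind}_{+}(\mathsf{inl}(p),\, x.l_1(x),\, y.l_2(y))\ \equiv\ l_1(p):C.$$

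Since this is a purely definitional computation, I do not expect a genuine mathematical obstacle; the only point requiring care is the bookkeeping of side conditions, in particular checking that the proof-term $s$ threaded through $\mathsf{ind}_\vee$ is precisely the datum that the truncation eliminator consumes, so that C-$\vert\vert\ \vert\vert$ reduces without any hidden coherence requirement. The feature that makes the conclusion hold at the level of definitional equality $\equiv$, rather than merely propositional equality $=$, is exactly that both computation rules invoked fire on a \emph{point} constructor ($|\mathsf{inl}(p)|$, respectively $\mathsf{inl}(p)$) and are therefore definitional; no use of the path constructor $\mathsf{sq}_A$, whose computation rule is only propositional, is needed here.
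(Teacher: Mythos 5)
Your proof is correct and is essentially the paper's argument: the paper states this lemma without a displayed proof, but the justification it gives for the analogous Lemma~\ref{beta} (``elimination of the truncation and usual $\beta$-definitional equalities for the corresponding types under truncation'') is exactly the two-step computation you carry out, namely firing C-$\vert\vert\ \vert\vert$ on the point constructor $\vert\mathsf{inl}(p)\vert$ and then the coproduct $\beta$-rule. Your remarks on the side condition $s:\isprop(C)$ matching the eliminator's hypothesis and on why the equality is definitional rather than merely propositional are accurate; you have simply written out in full what the paper treats as immediate.
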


    \begin{definition}\label{exist}
    For any h-set $A$ and any predicate or family of h-propositions
    $P(x) \ [x: A]$, the existential quantification  is defined as
    $$\exists_{x: A}\  P(x) \ \ourdef \vert\vert  \Sigma_{x:A}\ P(x)\vert \vert$$ 
Its canonical introductory constructor is defined as follows: for $a:A$ and $p: P(a)$
$$(a,_\exists p)\ourdef \vert (a,p)\vert: \exists_{x: A}\  P(x) $$
and its elimination constructor in turn as follows: for any $C$ such that $s: \isprop(C)$, any $ e: \exists_{x: A}\  P(x)$ and any
$l(x,y) :  C\ [x: A, y: P(x)]$ 
    $$\mathsf{ind}_{\exists}(e,\, x.y.l(x,y), s\ ) \ourdef \mathsf{ind}_{\vert\vert\ \vert\vert}   (e\, ,\,  z.  \mathsf{ind}_{\Sigma} (z, \, x.y.l(x,y) \, ), s\ ):C $$
    \end{definition}
    
    The existential quantification as defined above satisfies the usual $\beta$-definitional equality:
\begin{lemma}\label{betaex}
The existential quantifier defined in def. \ref{exist} satisfies the  following  $\beta$ definitional equality:
      for any $C$ such that $s:\isprop(C)$, any $a: A$ and $p: P(a)$ and any $q:Q$ and $l(x,y) :  C\ [x: A, y: P(x)]$   it holds in \hott\   
    $$\mathsf{ind}_{\exists} ((a,_\exists p),\, x.y.l(x,y), s\ )\equiv l(a,p): C$$
    \end{lemma}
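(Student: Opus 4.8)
The plan is to establish this definitional equality by unfolding Definition \ref{exist} and then applying, in turn, the computation rule for propositional truncation and the standard $\beta$-rule for indexed sums. No genuinely new idea is needed: the content of the lemma is just the composition of two computation rules.

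First I would rewrite the left-hand side using the definitions of the introductory and eliminator constructors. Since $(a,_\exists p)\ourdef \vert (a,p)\vert$ and
$$\mathsf{ind}_{\exists}(e,\, x.y.l(x,y),\, s) \ourdef \mathsf{ind}_{\vert\vert\ \vert\vert}(e,\, z.\mathsf{ind}_{\Sigma}(z,\, x.y.l(x,y)),\, s),$$
substituting $e \equiv (a,_\exists p) \equiv \vert(a,p)\vert$ rewrites the term $\mathsf{ind}_{\exists}((a,_\exists p),\, x.y.l(x,y),\, s)$ as
$$\mathsf{ind}_{\vert\vert\ \vert\vert}(\vert(a,p)\vert,\, z.\mathsf{ind}_{\Sigma}(z,\, x.y.l(x,y)),\, s).$$

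Next I would apply the computation rule C-$\vert\vert\ \vert\vert$ to the higher inductive type $\vert\vert \Sigma_{x:A} P(x)\vert\vert$. The hypothesis $s:\isprop(C)$ supplies exactly the coherence datum required to make this elimination well-formed, and the rule then yields the definitional equality of the displayed term with its family component evaluated at the point $(a,p):\Sigma_{x:A} P(x)$, that is, with $\mathsf{ind}_{\Sigma}((a,p),\, x.y.l(x,y))$. Finally I would invoke the $\beta$-computation rule for $\Sigma$-types, which \hott\ inherits from \mltt; this reduces $\mathsf{ind}_{\Sigma}((a,p),\, x.y.l(x,y))$ definitionally to $l(a,p)$, and chaining the two definitional equalities gives the claim.

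I do not expect any real obstacle. The only thing to verify is that the side conditions of both eliminators are met, namely that $C:\guni$, that $s:\isprop(C)$, and that the family $z.\mathsf{ind}_{\Sigma}(z,\, x.y.l(x,y))$ lands in $C$ (which follows from $l(x,y):C\ [x:A,\, y:P(x)]$ via $\Sigma$-elimination) — all of which hold by the hypotheses of the statement. The proof is therefore purely a matter of bookkeeping these two computation steps.
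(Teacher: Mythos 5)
Your proof is correct and is exactly the argument the paper intends: it leaves the proof of this lemma implicit, but for the analogous Lemma~\ref{beta} it says the equalities ``follow by elimination of the truncation and usual $\beta$-definitional equalities for the corresponding types under truncation,'' which is precisely your two-step reduction via the rule C-$\vert\vert\ \vert\vert$ followed by the $\beta$-rule for $\Sigma$. Your version just spells out the bookkeeping that the paper omits.
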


We also encode  the fact that the disjunction $\lor$ and the existential quantifier $\exists$ are h-propositions by means of the following proof-terms: 

\begin{center}
    \begin{tabular}{l}
      $\mathfrak{p}_\lor\, \ourdef\,  \lambda A, B.\ \mathfrak{p}_{\vert\vert\ \vert\vert}(A+
      B) :  \Pi_{A, B: \guni}\  \isprop(A\lor 
      B)$\\[5pt]
       
      $\mathfrak{p}_\exists\, \ourdef\,  \lambda A, B.\ \mathfrak{p}_{\vert\vert\ 
     \vert\vert} (\Sigma_{x : A} \ B(x)): \Pi_{A:\guni}\ \Pi_{B:A\rightarrow\guni}\  \isprop(\exists_{x:A}B(x))$\\[5pt]
      
\end{tabular}
\end{center}

\vspace{1.0em}

It is worth to recall from \cite{hottbook} that  the notion of type equivalence of h-propositions coincides with that of logical equivalence:
\begin{lemma}
Two h-propositions $P$ and $Q$ are equivalent as types, namely $P\simeq Q$ holds, if and only if they are logically equivalent, namely $P\leftrightarrow Q$,
and by Univalence,  also $P=_{\guni} Q$ holds for $P, Q $ in $\guni$.
\end{lemma}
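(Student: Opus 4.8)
The plan is to prove the two directions of the stated logical equivalence separately, using the h-proposition hypotheses only where they are genuinely needed, and then to read off the final identity clause from Univalence. For the forward direction I would observe that a proof of $P \simeq Q$ contains, by definition of the equivalence type, a function $f : P \to Q$ together with inverse data providing a map $g : Q \to P$; projecting out $f$ and $g$ yields the two implications $P \to Q$ and $Q \to P$, i.e.\ a proof of $P \leftrightarrow Q$, and no hypothesis on $P$ or $Q$ is used here. For the converse, I would start from $P \leftrightarrow Q$, i.e.\ from $f : P \to Q$ and $g : Q \to P$, and show that $f$ is an isomorphism in the sense of Definition \ref{isomor}, which is applicable because by Lemma \ref{propintoset} both $P$ and $Q$ are h-sets. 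The crucial point is that the two homotopies required by Definition \ref{isomor} are supplied for free by the h-proposition hypotheses: for each $x : P$ the terms $g(f(x))$ and $x$ both lie in $P$, so $\isprop(P)$ gives a proof of $\mathrm{Id}_P(g(f(x)), x)$, and symmetrically $\isprop(Q)$ gives $\mathrm{Id}_Q(f(g(y)), y)$ for each $y : Q$. Taking the product of these two families provides exactly the data of Definition \ref{isomor}, so $f$ is an isomorphism and hence $P \simeq Q$.

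Having established $P \simeq Q$ for $P, Q : \guni$, I would then obtain $P =_{\guni} Q$ directly from Univalence. Since $\mathrm{(UA)}$ asserts that $\mathsf{idtoeqv} : (P =_{\guni} Q) \to (P \simeq Q)$ is an equivalence, it admits an inverse; applying that inverse to the proof of $P \simeq Q$ just constructed produces a term of type $P =_{\guni} Q$, which completes the statement.

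The only delicate step is the passage from the quasi-inverse data of Definition \ref{isomor} (an \emph{isomorphism}) to an inhabitant of the equivalence type $\simeq$, since in \hott\ these are \emph{a priori} distinct notions; I would discharge this by the standard fact, available in \cite{hottbook}, that a quasi-inverse canonically determines a (half-adjoint) equivalence. I expect this to be the main obstacle in the write-up, but it is purely bookkeeping rather than a genuine difficulty, because the h-proposition hypotheses trivialize every higher coherence obligation on the homotopies: any two candidate proofs of the relevant identity types are themselves identified, so no adjustment of the homotopy data is ever required.
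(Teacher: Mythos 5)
Your argument is correct and is exactly the standard one: the paper gives no proof of its own here, merely recalling the result from the HoTT book (Lemma 3.3.3 there), and your write-up — projecting the maps out of the equivalence for one direction, using $\isprop(P)$ and $\isprop(Q)$ to manufacture the two homotopies for the other, promoting the quasi-inverse to an equivalence by the standard result, and finishing with the inverse of $\mathsf{idtoeqv}$ — is precisely that argument. Your closing remark that the h-proposition hypotheses trivialize all higher coherence data is also the right observation; there is nothing to add.
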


Further, we can state the following basic lemma: 

\begin{lemma}\label{invprop}
If $P: \guni$ and $s: \isprop(P)$, then   $\vert -\vert : P\rightarrow \vert\vert P\vert\vert$ is an isomorphism, i.e. there is an inverse $\vert -\vert ^{-1}:\vert\vert P\vert\vert \rightarrow P$ which satisfies $\vert- \vert\circ \vert -\vert^{-1} =_{\vert\vert P\vert\vert} \mathsf{id}_{\vert\vert P\vert\vert} $ and  $\vert -\vert^{-1} \circ \vert-\vert =_P \mathsf{id}_P $.
Therefore $P=_{\guni}\vert\vert P\vert\vert$ holds.
\end{lemma}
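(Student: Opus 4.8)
The plan is to obtain the inverse directly from the recursion principle of propositional truncation, and then to discharge the two round--trip equations separately: one will hold definitionally by the computation rule, the other will hold because $\vert\vert P\vert\vert$ is itself an h--proposition.

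First I would define the candidate inverse. Applying the elimination rule E-$\vert\vert\ \vert\vert$ with target type $C\ourdef P$, with data $c(x)\ourdef x:P\ [x:P]$ (the identity), and with side condition the proof $s:\isprop(P)$, I obtain a function $g\ourdef\vert-\vert^{-1}:\vert\vert P\vert\vert\rightarrow P$. Note that the hypothesis $s:\isprop(P)$ is precisely what the eliminator requires of its target, so $g$ is well defined. Moreover, by the computation rule C-$\vert\vert\ \vert\vert$ we get the definitional equality $g(\vert a\vert)\equiv a$ for every $a:P$.

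Next I would verify the two composites required by Definition \ref{isomor}. For $g\circ\vert-\vert=_{P}\mathsf{id}_{P}$, the equality $g(\vert a\vert)\equiv a$ gives $\mathsf{refl}_{a}:\mathrm{Id}_{P}(g(\vert a\vert),a)$, so $\lambda$--abstraction produces a term of $\Pi_{a:P}\ \mathrm{Id}_{P}(g(\vert a\vert),a)$. For $\vert-\vert\circ g=_{\vert\vert P\vert\vert}\mathsf{id}_{\vert\vert P\vert\vert}$, I would use that $\vert\vert P\vert\vert$ is an h--proposition, witnessed by $\mathfrak{p}_{\vert\vert\ \vert\vert}(P)$ from Lemma \ref{firstcon} (equivalently by the constant $\mathsf{sq}_{P}$): applied to $\vert g(y)\vert$ and $y$ this yields, for each $y:\vert\vert P\vert\vert$, a term of $\mathrm{Id}_{\vert\vert P\vert\vert}(\vert g(y)\vert,y)$, and $\lambda$--abstraction gives the second conjunct.

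Finally, since both $P$ and $\vert\vert P\vert\vert$ are h--propositions ($P$ by hypothesis, $\vert\vert P\vert\vert$ by $\mathfrak{p}_{\vert\vert\ \vert\vert}(P)$), they are in particular h--sets by Lemma \ref{propintoset}, so Definition \ref{isomor} genuinely applies and $\vert-\vert$ is an isomorphism with inverse $g$. The two maps $\vert-\vert:P\to\vert\vert P\vert\vert$ and $g:\vert\vert P\vert\vert\to P$ also exhibit a logical equivalence $P\leftrightarrow\vert\vert P\vert\vert$ between h--propositions, whence $P=_{\guni}\vert\vert P\vert\vert$ follows by the preceding lemma identifying logical equivalence of h--propositions with type equivalence, together with Univalence. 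I do not expect any genuine obstacle here: the whole argument is an instance of the universal property of truncation, and the only points requiring care are to confirm that the side condition of E-$\vert\vert\ \vert\vert$ is met (which is exactly where $s:\isprop(P)$ enters) and to invoke cumulativity (Lemma \ref{propintoset}) so that the h--set hypotheses of Definition \ref{isomor} are satisfied.
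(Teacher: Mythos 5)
Your proposal is correct and follows essentially the same route as the paper: defining the inverse as $\mathsf{ind}_{\vert\vert\ \vert\vert}(z,(x).x,s)$, noting that one round-trip holds definitionally by the computation rule while the other holds only propositionally because $\vert\vert P\vert\vert$ is an h-proposition, and concluding $P=_{\guni}\vert\vert P\vert\vert$ via Univalence. Your write-up is simply more explicit about the side conditions than the paper's brief argument.
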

\begin{proof}
We can simply define $\vert z\vert^{-1}\ \ourdef\ \mathsf{ind}_{\vert \vert\ \vert\vert} (z, (x).x, s)$ since $P$ is a
h-proposition. Note that for any $z:\vert\vert P\vert\vert $ it is validated $\vert (\vert z\vert^{-1})\vert =_{\vert \vert P\vert\vert }z $
 only propositionally while $\vert (\vert p\vert )\vert^{-1} \equiv p: P $ holds for any $p: P$.
The rest follows by Univalence and because $P$ is an h-proposition.
\end{proof}

\begin{remark}\label{ptrunc}
Lemma~\ref{invprop} is crucial to provide a \lq\lq canonical presentation'' of all h-propositions up to propositional equality in terms of $\vert\vert A\vert\vert$ for
some type $A$ thanks to the fact that  the operator $\vert\vert -\vert\vert$ is {\it extensionally idempotent} as follows
from proposition~\ref{invprop}.
Therefore we could interpret also the conjunction, implication and universal quantifiers  as follows
\begin{center}
\begin{tabular}{lll}
$  P\wedge Q$ &  $\ourdef$  & $  \vert\vert P \times Q\vert\vert$ \\[5pt]
 $  P\rightarrow Q$ &  $\ourdef $ &  $ \vert\vert P \rightarrow Q\vert\vert$ \\[5pt]
 $\forall_{x:A} \ P(x)$ & $\ourdef$ & $ \vert\vert \Pi_{x:A} \ P(x) \vert\vert$ \\
\end{tabular}
\end{center}

Accordingly, the following proof-terms witness that they are h-propositions: 

\begin{center}
    \begin{tabular}{lll}
    $\mathfrak{p}_{\vert\vert \times\vert\vert}$ & $\ourdef$ & $\lambda A, B.\ \mathfrak{p}_{\vert\vert\ \vert\vert}(A\times B): \Pi_{A,B: \guni} \isprop(\vert\vert A\times B\vert\vert)$\\[5pt]
    
    $\mathfrak{p}_{\vert\vert \rightarrow\vert\vert}$ & $\ourdef$ & $\lambda A, B.\ \mathfrak{p}_{\vert\vert\ \vert\vert}(A\rightarrow B): \Pi_{A,B: \guni} \isprop(\vert\vert A\rightarrow B\vert\vert)$\\[5pt]
    
    $\mathfrak{p}_{\vert\vert \Pi\vert\vert}$ & $\ourdef$ & $\lambda A, B.\ \mathfrak{p}_{\vert\vert\ \vert\vert}(\Pi_{x:A}B(x)): \Pi_{A: \guni} \Pi_{B: A\rightarrow \guni} \isprop (\vert\vert \Pi_{x:A}B(x)\vert\vert)$\\[5pt]
    \end{tabular}
\end{center}
\end{remark}

\begin{definition}\label{cong}
Given $a: A$ and $b:B$ and $c: \vert\vert A\times B\vert\vert$ and $s:\isprop(A)$ and $t: \isprop(B)$ we define
$$\begin{array}{ll}(a,_\wedge b)\ \ourdef \ \vert (a,b)\vert: \vert\vert A\times B\vert\vert & \qquad {\fpr}_\wedge (c)\ \ourdef\ \mathsf{ind}_{\vert\vert \ \vert\vert}(\, c, x.\fpr(x), s\ )\\
& \qquad{\spr}_\wedge (c)\ \ourdef\ \mathsf{ind}_{\vert\vert \ \vert\vert}(\, c, x.\spr(x), t\ )
\end{array}$$
\end{definition}
\begin{definition}\label{imp}
Given $a: A$  and $b:B $ and $c: \vert\vert A\rightarrow B\vert\vert$ and $s: \isprop(B)$ we define
$$\begin{array}{ll} \lambda_\rightarrow x. b \ \ourdef \ \vert \lambda x. b \vert: \vert\vert A\rightarrow B\vert\vert & \qquad c_{\rightarrow} (a) \ourdef\ \mathsf{ind}_{\vert\vert \ \vert\vert}(\, c, x. x(a), s\ )\\
\end{array}$$
\end{definition}
\begin{definition}\label{forall}
Given $a: A$  and $b:B(x)\ [x:A] $ and $c: \vert\vert \Pi_{x : A}\ B(x)\vert\vert$ and $s: \isprop(B(a))$ we define
$$\begin{array}{ll} \lambda_\forall x. b \ \ourdef \ \vert \lambda x. b \vert: \vert\vert \Pi_{x : A}\ B(x)\vert\vert & \qquad c_{\forall} (a) \ourdef\ \mathsf{ind}_{\vert\vert \ \vert\vert}(\, c, x. x(a), s\ )\\
\end{array}$$
\end{definition}

\begin{lemma}\label{beta}
The usual $\beta$-definitional equalities for the projections of conjunctions in def.\ref{cong}
$${\fpr}_\wedge ( (a,_\wedge b)\  )\ \equiv\ a \qquad\qquad {\spr}_\wedge ( (a,_\wedge b)\  )\ \equiv\ b $$
for functions of implications  in def.\ref{imp} and universal quantifiers in def.\ref{forall}
$$(\lambda_\rightarrow x. b )_{\rightarrow}(a)\ \equiv\ b[a/x]\qquad \qquad(\lambda_\forall x. b )_{\forall}(a)\ \equiv\ b[a/x] $$
according to the notion of substitution in the appendix of \cite{hottbook}, all hold in \hott.
\end{lemma}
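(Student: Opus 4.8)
The plan is to verify each of the three definitional equalities by unfolding the definitions in \ref{cong}, \ref{imp} and \ref{forall} and then chaining the computation rule C-$\vert\vert\ \vert\vert$ of propositional truncation with the $\beta$-rules of the underlying product and function types. Since every operation in those definitions is the truncation eliminator $\mathsf{ind}_{\vert\vert\ \vert\vert}$ applied to a canonical element of the form $\vert - \vert$, the governing observation is that C-$\vert\vert\ \vert\vert$ holds as a strict definitional equality ($\equiv$) in the presentation adopted here, so every reduction step preserves $\equiv$ rather than merely yielding a path.

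First I would treat the conjunction projections. Unfolding $(a,_\wedge b)\ourdef \vert(a,b)\vert$, the left-hand side ${\fpr}_\wedge((a,_\wedge b))$ becomes $\mathsf{ind}_{\vert\vert\ \vert\vert}(\vert(a,b)\vert,\ x.y.\fpr((x,y)),\ s)$. Applying C-$\vert\vert\ \vert\vert$ at the canonical constructor $\vert(a,b)\vert$ collapses this to the continuation evaluated at the pair $(a,b)$, namely $\fpr((a,b))$, and the $\beta$-rule for the first product projection gives $\fpr((a,b))\equiv a$, as required. The computation for ${\spr}_\wedge$ is symmetric, now using $t:\isprop(B)$ and $\spr((a,b))\equiv b$.

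Next I would dispatch the implication case. Unfolding $\lambda_\rightarrow x.b\ourdef \vert\lambda x.b\vert$, the term $(\lambda_\rightarrow x.b)_\rightarrow(a)$ expands to $\mathsf{ind}_{\vert\vert\ \vert\vert}(\vert\lambda x.b\vert,\ f.f(a),\ s)$, whose continuation sends $f:A\rightarrow B$ to $f(a):B$. By C-$\vert\vert\ \vert\vert$ this reduces to $(\lambda x.b)(a)$, and the usual $\beta$-rule for function application yields $(\lambda x.b)(a)\equiv b[a/x]$. The universal-quantifier case in \ref{forall} is the same argument verbatim, with $B$ now a family $B(x)\ [x:A]$, side condition $s:\isprop(B(x))$, and the dependent $\beta$-rule for $\Pi$ replacing the one for $\rightarrow$.

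No genuine obstacle arises: the whole verification is a matter of composing definitional equalities. The single delicate point—and the only place where the argument would break in a different presentation of \hott—is that I rely on C-$\vert\vert\ \vert\vert$ being a strict computation rule, so that each $\beta$-equality is obtained \emph{on the nose}; this is precisely how propositional truncation is postulated in the present setting. It is also worth confirming before reducing that the propositionality side conditions demanded by the eliminator are discharged in each instance—$s:\isprop(A)$ and $t:\isprop(B)$ for the two projections, and $s:\isprop(B)$ (resp. $s:\isprop(B(x))$) for implication (resp. $\Pi$)—so that each instance of $\mathsf{ind}_{\vert\vert\ \vert\vert}$ is well typed and the computation rule legitimately applies.
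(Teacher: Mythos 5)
Your proposal is correct and follows essentially the same route as the paper, which proves the lemma in one line by ``elimination of the truncation and usual $\beta$-definitional equalities for the corresponding types under truncation''; you have simply spelled out the unfolding of definitions \ref{cong}, \ref{imp} and \ref{forall}, the application of the computation rule C-$\vert\vert\ \vert\vert$ at the canonical constructor $\vert-\vert$, and the underlying $\beta$-rules for $\times$, $\rightarrow$ and $\Pi$. Your remark that the argument depends on C-$\vert\vert\ \vert\vert$ being a strict definitional equality is a fair and accurate observation about the presentation adopted in the paper.
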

\begin{proof}
They follow by elimination of the truncation and usual  $\beta$-definitional equalities for the corresponding types under truncation.
\end{proof}

We will crucially use the fact  that h-sets are closed under the following type constructors:

\begin{lemma}\label{setclosure}
H-sets are closed under $\Pi$ (and hence $\rightarrow$), $\Sigma$ (and hence $\times$), and $\mathsf{+}$ and $\mathsf{List}$.
Furthermore, for any h-set $A$ and any equivalence relation $R$ defined as an h-proposition, then the higher quotient type $A/R$ is an h-set. Therefore, the following proof-terms exist: \\
\vspace{1.0em}
\begin{tabular}{l}
\\    
$\mathfrak{s}_1: \isset(\textbf{1})$\qquad $\mathfrak{s}_0: \isset(\textbf{0})$\qquad $\mathfrak{s}_{\mathbb{N}}: \isset(\mathbb{N})$\\[5pt]
$\mathfrak{s}_\Pi: \Pi_{A:\guni}\ \Pi_{B: A\rightarrow \guni}\ \Pi_{s: \Pi_{x:A} \isset(B(x))} \isset(\Pi_{x:A}B(x))$\\[5pt]
      $\mathfrak{s}_\Sigma: \Pi_{A:\guni}\ \Pi_{B: A\rightarrow \guni}\ \Pi_{s: \isset(A)}\  \Pi_{t: \Pi_{x:A} \isset(B(x))} \isset(\Sigma_{x:A}B(x))$\\[5pt]
       $\mathfrak{s}_+: \Pi_{A,B: \guni}\ \Pi_{s: \isset(A)}\ \Pi_{t: \isset(B)}\ \isset(A +
       B)$\\[5pt]
       $\mathfrak{s}_{\mathsf{List}}: \Pi_{A: \guni} \Pi_{s:\isset(A)}\ \isset(\mathsf{List}(A))$\\[5pt]
       $\mathfrak{s}_{Q}: \Pi_{A: \guni}\ \Pi_{R: A\rightarrow A\rightarrow\guni}\ \Pi_{s: \isset(A)}\ \Pi_{p: \isprop(R)}\ \Pi_{r:  \mathrm{equiv}(R)}\ \isset(A/R)$\\

\end{tabular}

where $\mathrm{equiv}(R)$ is an abbreviation for the fact that $R$ is an equivalence relation.
\end{lemma}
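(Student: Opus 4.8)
The plan is to reduce every closure claim to the single, convenient reformulation that, by unfolding the definitions of $\isset$ and $\isprop$, a type $C$ is an h-set precisely when each of its identity types is an h-proposition: indeed $\isset(C)$ and $\Pi_{x,y:C}\ \isprop(x =_C y)$ are literally the same type, a fact recorded in one direction as Lemma~\ref{id}. Granting this, for each type former it suffices to characterise the identity types of the constructed type and to check, using the h-proposition closure properties collected in Lemma~\ref{firstcon}, that the resulting path spaces are h-propositions. The desired witnesses $\mathfrak{s}_\Pi,\mathfrak{s}_\Sigma,\mathfrak{s}_+,\mathfrak{s}_{\mathsf{List}},\mathfrak{s}_Q$ are then obtained by packaging these equivalences with the proof-terms of Lemma~\ref{firstcon}; the base cases $\mathfrak{s}_0,\mathfrak{s}_1,\mathfrak{s}_{\mathbb{N}}$ are standard, since $\textbf{0}$ and $\textbf{1}$ are h-propositions by Lemma~\ref{firstcon} and hence h-sets by Lemma~\ref{propintoset}, while $\mathbb{N}$ is an h-set by its decidable equality.

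For $\Pi$ I would invoke function extensionality, which holds in \hott\ as a consequence of Univalence, to replace the path space $f =_{\Pi_{x:A}B(x)} g$ by $\Pi_{x:A}\ (f(x) =_{B(x)} g(x))$. Each factor is an h-proposition by the hypothesis that every $B(x)$ is an h-set, and $\Pi$ preserves h-propositions by $\mathfrak{p}_\Pi$ of Lemma~\ref{firstcon}; this yields $\mathfrak{s}_\Pi$ and, as the non-dependent instance, closure under $\rightarrow$. For $\Sigma$ I would use the standard characterisation of paths in a dependent sum, namely that $(a,b) =_{\Sigma_{x:A}B(x)} (a',b')$ is equivalent to $\Sigma_{p:a=_A a'}\ \bigl(p_\ast(b) =_{B(a')} b'\bigr)$. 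Here the base $a =_A a'$ is an h-proposition because $A$ is an h-set, and the fibre $p_\ast(b) =_{B(a')} b'$ is an h-proposition because $B(a')$ is an h-set; since a dependent sum of h-propositions indexed over an h-proposition is easily seen to be again an h-proposition, the path space is an h-proposition and $\mathfrak{s}_\Sigma$ follows, with $\times$ as the non-dependent case.

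For $+$ and $\mathsf{List}$ I would argue by the encode--decode method. For $A+B$ the code of a path is $a =_A a'$ on the two $\mathsf{inl}/\mathsf{inr}$ diagonals and $\textbf{0}$ on the mixed constructors; each such code is an h-proposition, using that $A,B$ are h-sets and that $\textbf{0}$ is an h-proposition, so every path space of $A+B$ is an h-proposition and we obtain $\mathfrak{s}_+$. For $\mathsf{List}(A)$ the analogous code compares two lists to $\textbf{0}$ when their lengths differ and to an iterated $\times$ of the componentwise identity types $a_i =_A a_i'$ when they agree; by induction on the lists this code is built from $\textbf{0}$, $\textbf{1}$ and h-propositions closed under $\times$, hence is an h-proposition, giving $\mathfrak{s}_{\mathsf{List}}$.

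The main obstacle is the quotient $A/R$. If the defining higher-inductive rules of $A/R$ include, in direct analogy with the squash constructor $\mathsf{sq}_A$ that forces $\vert\vert A\vert\vert$ to be an h-proposition, a truncation constructor asserting $\isset(A/R)$, then $\mathfrak{s}_Q$ is read off immediately and the hypotheses $\isset(A)$, $\isprop(R)$ and $\mathrm{equiv}(R)$ serve as the well-formedness side-conditions for forming the \emph{effective} quotient. If instead $A/R$ is presented without such a constructor, then $\isset(A/R)$ has to be derived, and the real work is the characterisation of its path space: one must establish the effectiveness equivalence $[a] =_{A/R} [b] \simeq R(a,b)$ and, more generally, that all identity types of $A/R$ are h-propositions, which is precisely where the hypotheses that $A$ is an h-set, that $R$ is valued in h-propositions, and that $R$ is an equivalence relation are consumed. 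This is the delicate step, and I would carry it out following the treatment of set quotients in Chapter~6 of \cite{hottbook}; once it is in place, $\mathfrak{s}_Q$ again follows from the reformulation of the first paragraph.
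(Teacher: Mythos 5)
Your argument is correct, and it is in fact more than the paper provides: the paper gives no proof of this lemma at all, but simply defers to Remark~\ref{eff-quot}, which cites \cite{rijke2015sets} (and Chapter~7 of \cite{hottbook}) for the abstract statement that the h-sets form a locally cartesian closed pretopos with W-types, from which all the listed closure properties and the existence of the proof-terms follow. Your route is the standard self-contained one: reduce $\isset(C)$ to the judgementally equal $\Pi_{x,y:C}\,\isprop(x=_C y)$, characterise the path spaces of each type former (via $\mathsf{funext}$ for $\Pi$, the $\Sigma$-path lemma for $\Sigma$, and encode--decode for $+$ and $\mathsf{List}$), and conclude using the h-proposition closure properties of Lemma~\ref{firstcon} together with the fact that h-propositions are stable under equivalence. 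For the quotient your case analysis is also accurate: the set quotient of \cite{hottbook} (Section~6.10) carries a $0$-truncation constructor, so $\isset(A/R)$ is immediate and the hypotheses $\isset(A)$, $\isprop(R)$, $\mathrm{equiv}(R)$ are consumed only in establishing effectiveness, exactly as the paper's Remark~\ref{eff-quot} records. The only point worth making explicit if you were to write this out in full is the lemma that a type equivalent to an h-proposition is itself an h-proposition (needed every time you replace a path space by its characterisation), but that is standard and unproblematic. In short: the paper buys brevity by citation; your proof buys self-containedness at the cost of the encode--decode bookkeeping.
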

\vspace{1.0em}

For any natural number index $i$, the type of h-sets within $\mathcal{U}_{i}$ is defined as follows
$$\set_{\guni}\ourdef \Sigma_{(X: \guni)}\ \mathsf{IsSet}(X)$$

%
%

\begin{remark}\label{eff-quot}

The lemma \ref{setclosure} follows from \cite{rijke2015sets} where more abstractly it is shown that 
the category of h-sets and  functions within \hott\ equated under propositional equality,  is   a locally cartesian closed pretopos with  well-founded trees, or W-types, as defined in \cite{wtyp}. 
In particular note that set-quotients satisfy {\em effectiveness} in the sense that,  given the quotient function $\mathsf{q}: A \rightarrow A/R$
sending an element $a$ of $A$  to its equivalence class $\mathsf{q}(a): A/R$, for any $a,b: A$ it follows
$\mathsf{q}(a)=_{A/R} \mathsf{q}(b)\ \leftrightarrow \ R(a,b)$ (see 10.1.3 in \cite{hottbook}).
\end{remark}

%
%
%
%
%
%
%

Another key property of \hott, missing in \mltt,  which we will crucially employ to interpret \mtt-collections of small propositions  and \emtt-power-collections of subsets of a set,
is that h-sets are closed under a  {\it sub-universe classifier} $ \mathsf{Prop}_{\mathcal{U}_{0}}$ of those h-propositions living in the universe $\funi$
\begin{equation*}
    \mathsf{Prop}_{\mathcal{U}_{0}}:= \Sigma_{(X:\mathcal{U}_{0})}\ \isprop (X)
\end{equation*}

Indeed, from section 2 of \cite{rijke2015sets} it follows:

\begin{lemma}\label{classifier}
$\mathsf{Prop}_{\mathcal{U}_{0}}$ is an h-set.
\end{lemma}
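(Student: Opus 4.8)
The plan is to prove that every identity type of $\propo$ is an h-proposition, which, unfolding $\isset(\propo) \equiv \Pi_{u,v:\propo}\,\Pi_{p,q:u=v}\, p=q$, is exactly the statement that $\propo$ is an h-set. So I fix two elements $u \equiv (X,p)$ and $v \equiv (Y,q)$ of $\propo$, with $p:\isprop(X)$ and $q:\isprop(Y)$, and analyze $u =_{\propo} v$ through a chain of equivalences. The first step is the standard characterization of paths in a $\Sigma$-type: $u =_{\propo} v$ is equivalent to $\Sigma_{e:\,X =_{\funi} Y}\ \bigl(\trasp(e,p) =_{\isprop(Y)} q\bigr)$, where $\trasp(e,p)$ denotes the transport of the witness $p$ along $e$.

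Now the key reduction: because being an h-proposition is itself an h-proposition, each fibre $\trasp(e,p) =_{\isprop(Y)} q$ is contractible, so the total $\Sigma$ collapses and $u =_{\propo} v$ becomes equivalent simply to $X =_{\funi} Y$ (equivalently, the projection $\fpr : \propo \to \funi$ is an embedding). That $\isprop(X)$ is an h-proposition follows from function extensionality together with Lemma~\ref{propintoset}: any proof of $\isprop(X)$ makes $X$ an h-set, so $\mathrm{Id}_X$ is a proposition, and hence two proofs of $\Pi_{x,y:X}\,x=_X y$ agree pointwise and, by function extensionality, agree. Next I apply Univalence to get $X =_{\funi} Y \simeq (X \simeq Y)$, and then invoke the lemma recalling that for h-propositions type equivalence coincides with logical equivalence, yielding $(X \simeq Y) \simeq (X \leftrightarrow Y)$ with $X \leftrightarrow Y \equiv (X \to Y)\times(Y\to X)$. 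By Lemma~\ref{firstcon}, h-propositions are closed under $\to$ and $\times$, so $X \leftrightarrow Y$ is an h-proposition. Since being an h-proposition transfers across equivalences, transporting back along the whole chain shows that $u =_{\propo} v$ is an h-proposition. As $u,v$ were arbitrary, all identity types of $\propo$ are propositions, so $\propo$ is an h-set.

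The main obstacle is the first equivalence, namely justifying that the second component of the $\Sigma$-path characterization collapses. This rests precisely on the fact that $\isprop(X)$ is an h-proposition for every $X$, and it is here that function extensionality is indispensable: without it one cannot conclude that the two transported witnesses $\trasp(e,p)$ and $q$ are identified in a contractible way, and the $\Sigma$-type would fail to reduce to the bare equality $X =_{\funi} Y$. Everything downstream — Univalence, the identification of type equivalence with logical equivalence for h-propositions, and the closure of h-propositions under $\to$ and $\times$ from Lemma~\ref{firstcon} — is comparatively routine once this reduction is secured.
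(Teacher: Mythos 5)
Your proof is correct, and it is essentially the standard argument that the paper itself does not spell out but delegates to Section 2 of the cited Rijke--Spitters reference (it is the $n=-1$ instance of the fact that $n$-types form an $(n+1)$-type, cf.\ Theorem 7.1.11 of the HoTT book): characterize paths in the $\Sigma$-type, collapse the fibre using the fact that $\isprop(X)$ is itself an h-proposition (via function extensionality), and reduce $u=_{\propo}v$ through Univalence to the logical equivalence $X\leftrightarrow Y$, which is an h-proposition by Lemma~\ref{firstcon}. No gaps.
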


The proof-term inhabiting $\isset ( \mathsf{Prop}_{\mathcal{U}_{0}})$ is denoted by $\mathfrak{s}_{\mathsf{Prop_0}}$. 

\begin{remark}
However, $\mathsf{Prop}_{\mathcal{U}_{0}}$ is not \lq small', since it is not a type in $\mathcal{U}_{0}$, but it lives in a higher universe (see section 10.1 in \cite{hottbook}).
This is compulsory to keep \hott\ predicative.
\end{remark}

Further, we can  {\it  assume}  that {\it  if $A:\guni$, then $A/R:\guni$} 
motivated by  the cubical interpretation of higher inductive types given in \cite{CHM18}.

Moreover, h-sets within a universe $\mathcal{U}_i$ of \hott\ can be organized into a category $\set_{{\mathcal{U}_i}}$
as defined in \cite{hottbook}.

It is known that the principle of indiscernibility of identicals can be derived in type theory from the elimination rule for propositional equality. Such principle is called {\it transport} in \cite{hottbook} and says that, given a type family $P$ over $A$ and a proof $p: x=_A y$, there exists a map $\trasp(p,-): P(x) \rightarrow P(y)$. In particular, the following property holds for transport, that will turn out to be useful later: 

\begin{lemma}\label{apd-tr}
Suppose $f: \Pi_{(x:A)} B(x)$. Then there exists a map

\begin{equation*}
\mathsf{apd}_f: \Pi_{(p: x=_A y)} (\trasp(p,f(x))=_{B(y)} f(y))
\end{equation*}

\end{lemma}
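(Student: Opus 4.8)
The plan is to prove the statement by a single application of path induction, i.e.\ the eliminator for the identity type (cf.\ \cite{hottbook, PMTT}). Since $f: \Pi_{(x:A)} B(x)$ is fixed, I would first make the endpoints explicit and regard the goal as the construction of a term
$$\mathsf{apd}_f : \Pi_{(x,y:A)}\ \Pi_{(p:\, x=_A y)}\ \bigl(\trasp(p, f(x)) =_{B(y)} f(y)\bigr).$$
To apply path induction I would take as motive the family
$$C(x,y,p)\ \ourdef\ \bigl(\trasp(p, f(x)) =_{B(y)} f(y)\bigr),$$
which is a well-formed type family over the based path space precisely because $B(x)\ [x:A]$ is a type family and $\trasp(p,-): B(x) \rightarrow B(y)$ is the transport map recalled above. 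Path induction then reduces the construction to providing, for each $x:A$, a single term of $C(x,x,\mathsf{refl}_x)$.

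Second, I would discharge this remaining case using the computation rule for transport along reflexivity: one has the definitional equality $\trasp(\mathsf{refl}_x, u) \equiv u$ for every $u: B(x)$, so in particular $\trasp(\mathsf{refl}_x, f(x)) \equiv f(x): B(x)$. Hence $C(x,x,\mathsf{refl}_x)$ is, definitionally, the type $f(x) =_{B(x)} f(x)$, which is inhabited by $\mathsf{refl}_{f(x)}$. Setting $\mathsf{apd}_f(x,x,\mathsf{refl}_x) \ourdef \mathsf{refl}_{f(x)}$ and letting path induction extend this definition to an arbitrary path $p: x=_A y$ yields the desired map.

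There is no genuine obstacle in this argument: it is one application of path induction followed by the standard definitional behaviour of transport on $\mathsf{refl}$. The only point requiring mild care is the formulation of the motive $C$, which must record both endpoints $x,y$ together with the path $p$ so that the induction typechecks; once the goal is phrased with $\trasp(p, f(x))$ appearing explicitly on the left-hand side, the reduction of the base case to $\mathsf{refl}_{f(x)}$ is immediate. This is exactly Lemma 2.3.4 of \cite{hottbook}, to which I would defer for the routine verification.
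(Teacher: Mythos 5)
Your proof is correct and is exactly what the paper intends: its own proof consists of the single remark that the result is ``a simple application of the elimination rule for propositional equality,'' and your path induction with motive $C(x,y,p)\ourdef(\trasp(p,f(x))=_{B(y)}f(y))$, discharged in the base case by $\trasp(\mathsf{refl}_x,f(x))\equiv f(x)$ and $\mathsf{refl}_{f(x)}$, is precisely that application (Lemma 2.3.4 of the HoTT book). No differences to report.
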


\begin{proof}
The proof is a simple application of the elimination rule for propositional equality. 
\end{proof}

Finally, we recall two principles of \hott\ that we will crucially use
to meet our goals. One is the {\it propositional extensionality principle} which is an instance of the Univalence Axiom applied to h-propositions in the first universe $\funi$:

\begin{equation*}
\mathsf{propext}: \Pi_{P,Q:\mathsf{Prop}_{\mathcal{U}_{0}}}(P\leftrightarrow Q) \rightarrow (P=_{\funi}Q).
\end{equation*}
The other is 
the principle of {\it function extensionality}  for h-sets:
\begin{equation*}
\mathsf{funext}: (\Pi_{x:A}(f(x)=_{B(x)}g(x)))\ \rightarrow\  f=_{\Pi_{x:A}B(x)}g.
\end{equation*}

More precisely,  we will use function extensionality applied to h-sets up to those within the second universe $\suni$.
The reason is that, while sets of both \mtt\ and \emtt\  will be interpreted as h-sets in the first universe
$\funi$, collections of both \mtt\ and \emtt\ will be interpreted as  h-sets
at most in the second universe $\suni $.

\section{The compatibility of \mtt\ with \hott}
The main aim of the present section is to show that the intensional level  \mtt\ of \mf\ is compatible with \hott, according to the definition of compatibility given in section 2. In order to achieve this result, we need to make use of many new tools introduced in the context of \hott\ and not available in \mltt.

Indeed, the resulting interpretation  must be contrasted with the interpretation of \mtt\ in \mltt\ outlined in \cite{m09}: there the notion of proposition is identified with the notion of
set, while here we are going to interpret \mtt-propositions as h-propositions.

It is well known that the interpretation of  dependent type theories \`a  la Martin-L{\"o}f must be done by induction on the raw syntax of \mtt-judgements since types and terms
are recursively defined in a mutual way together with their definitional equalities.

Then, we can define a partial interpretation $(J)^\sqbullet$
 by induction on the associated raw syntax of \mtt-types and terms  in the raw syntax of types and terms in that of \hott\ as follows: we interpret all  types of \mtt\  including proper \mtt-collections  as h-sets, where the \lq\lq smallness'' character of \mtt-sets is captured by h-sets living in
 the  first universe $\funi$. Hence, \mtt-sets  and \mtt-small propositions are interpreted as h-sets and h-propositions in $\mathcal{U}_{0}$. On the other hand, \mtt-collections and \mtt-propositions are interpreted, respectively, as h-sets and h-propositions in $\mathcal{U}_1$.
 
\begin{definition}[Interpretation of \mtt-syntax]\label{mtt-int}

We define this interpretation as an instantiation of a partial interpretation of the raw syntax of types and terms of \mtt\ in those of \hott\
$$(-)^\sqbullet:  \mbox{Raw-syntax }(\mtt) \ \longrightarrow \ \mbox{Raw-syntax }(\hott) $$
assuming to have defined two auxiliary partial functions:
one meant to associate  to some type symbols of \hott\   a proof-term expressing that they are   h-propositions
$$\prp(-):  \mbox{Raw-syntax }(\hott) \ \longrightarrow \ \mbox{Raw-syntax }(\hott) $$
and another meant to associate to some type symbols of \hott\   a proof-term expressing that they are h-sets
 $$\prs(-):  \mbox{Raw-syntax }(\hott) \ \longrightarrow \ \mbox{Raw-syntax }(\hott) $$
by relying on proofs given in lemmas~\ref{firstcon} and \ref{setclosure} taken from \cite{hottbook} and \cite{rijke2015sets}.  

We then extend $(-)^\sqbullet$ to contexts of \mtt\ in those of \hott\ as follows: $([\ ])^{\sqbullet}$ is defined as the empty context $\cdot$  in  \hott\ and $(\Gamma, x: A)^{\sqbullet}$ is defined as $\Gamma^{\sqbullet}, x: A^{\sqbullet}$. 
Also the assumption of variables is interpreted as the assumption of variables in \hott:  $(x\in A\ [\Gamma])^{\sqbullet}$ is interpreted as $x: A^{\sqbullet}\ [\Gamma^{\sqbullet}]$, provided that $x: A^{\sqbullet}$ is in $\Gamma^{\sqbullet}$.

Then, the  \mtt-judgements  are interpreted as follows:  

\begin{center}
\begin{tabular}{|lcl|}
\hline
&&\\[0.5pt]
    $(A\ set\ [\Gamma])^{\sqbullet} $  &is  defined as &  $A^\sqbullet :\funi \ \gam$  such that $\prs(A^\sqbullet): \isset(A^\sqbullet)$ is derivable \\[5pt]
    
     $(A\ col\ [\Gamma])^{\sqbullet} $  &is  defined as &  $A^\sqbullet :\suni \ \gam$  such that $\prs(A^\sqbullet): \isset(A^\sqbullet)$ is derivable \\[5pt]
    
    $(P\ \textit{prop}_{s}\ [\Gamma])^{\sqbullet
    } $ & is defined as & $ P^\sqbullet: \funi \ \gam$ such that $\prp(P^\sqbullet): \isprop(P^\sqbullet)$ is derivable \\[5pt]
    
    $(P\ \textit{prop}\ [\Gamma])^{\sqbullet} $ & is defined as & $ P^\sqbullet: \suni \ \gam$ such that $\prp(P^\sqbullet): \isprop(P^\sqbullet)$ is derivable  \\[5pt]
   
     $(A=B \ set \ [\Gamma])^{\sqbullet}$ & is defined as & $  (A^{\sqbullet}, \prs(A^\sqbullet))\ \equiv  (B^{\sqbullet}, \prs(B^\sqbullet)) : \set_{\funi} \ \gam$  \\[5pt]
     
     $(A=B \ col \ [\Gamma])^{\sqbullet}$ & is defined as & $  (A^{\sqbullet}, \prs(A^\sqbullet))\ \equiv  (B^{\sqbullet}, \prs(B^\sqbullet)) : \set_{\suni} \ \gam $   \\[5pt]
     
     $(P=Q\ \textit{prop}_{s}\ [\Gamma])^{\sqbullet} $ & is defined as & $ (P^\sqbullet, \prp (P^\sqbullet)) \equiv (Q^\sqbullet, \prp (Q^\sqbullet)):  \prop_{\funi} \ \gam$  \\[5pt]
     
    $(P=Q\ \textit{prop}\ [\Gamma])^{\sqbullet}$ & is defined as & $(P^\sqbullet, \prp (P^\sqbullet)) \equiv (Q^\sqbullet, \prp (Q^\sqbullet)): \prop_{\suni} \ \gam$  \\[5pt]
      $(a\in  A\ \ [\Gamma])^{\sqbullet}$ & is defined as & $a^{\sqbullet}: A^{\sqbullet}\ \gam $\\[5pt]
         
 $(a=b\in  A\ [\Gamma])^{\sqbullet}$ &is defined as & $a^\sqbullet \equiv b^\sqbullet : A^\sqbullet \ \gam$\\[5pt]
 \hline
\end{tabular}
\end{center}
%

 
The interpretation of the raw types and terms of \mtt\ as raw types and terms of \hott\ is spelled out in the Appendix A.  
\end{definition}
\vspace{1.0em}

The following substitution lemmas state that substitution on types and terms in \mtt\ corresponds to substitution on types and terms in \hott: 

\begin{lemma}\label{sub-t}
If $A$ is a raw-type in \mtt, $b$ is a \mtt\ raw-term and $x$ is a variable occurring free in $A$, then $$(A [b/x])^\sqbullet\ \ourdef\ A^\sqbullet [b^\sqbullet / x^\sqbullet].$$ 

If $a$ and $b$ are \mtt\ raw-terms and $x$ is a variable occurring free in $a$, then $$( a[b/x])^\sqbullet\ \ourdef\ a^\sqbullet [b^\sqbullet/x^\sqbullet].$$
\end{lemma}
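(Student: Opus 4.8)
The plan is to prove both statements simultaneously by mutual structural induction on the raw syntax of \mtt, exploiting the fact that the interpretation $(-)^\sqbullet$ spelled out in Appendix A is defined by a single structural recursion on raw types and terms which maps each \mtt\ type- or term-former to a corresponding \hott\ one, while the substitution $b[\,\cdot\,/x]$ is itself defined by structural recursion on the same grammar. Since both operations recurse on the same syntactic structure, the claim is that they commute on the nose. Because some interpreted terms carry inserted proof-witnesses produced by the auxiliary functions $\prs(-)$ and $\prp(-)$ (for instance the $\isprop$-argument fed to $\mathsf{ind}_{\vert\vert\ \vert\vert}$ in Definitions~\ref{disj},~\ref{exist},~\ref{cong},~\ref{imp} and~\ref{forall}), I would strengthen the induction hypothesis to the joint statement that substitution also commutes with $\prs$ and $\prp$, i.e. $\prs(A^\sqbullet)[b^\sqbullet/x^\sqbullet] \ourdef \prs\big((A[b/x])^\sqbullet\big)$ and likewise for $\prp$; this is needed for the recursive cases to close.

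First I would treat the base case of a variable $y$. If $y$ is $x$, then $x[b/x]$ is $b$, so $(x[b/x])^\sqbullet \ourdef b^\sqbullet$, while $x^\sqbullet[b^\sqbullet/x^\sqbullet] \ourdef b^\sqbullet$ as well, since $x^\sqbullet$ is exactly the \hott-variable being substituted for. If $y$ is distinct from $x$, then $y[b/x]$ is $y$ and, since the context interpretation of Definition~\ref{mtt-int} sends distinct \mtt-variables to distinct \hott-variables, $x^\sqbullet$ does not occur in $y^\sqbullet$, so $y^\sqbullet[b^\sqbullet/x^\sqbullet] \ourdef y^\sqbullet$. The nullary formers ($N_0$, $N_1$, $\mathsf{prop_s}$, and so on) are fixed under both substitution and interpretation, hence trivial. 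For the compound non-binding formers (disjoint sum $A+B$, application, pairing, the projections of Definition~\ref{cong}, the connectives of Lemma~\ref{firstcon}, etc.) substitution distributes over the former and the interpretation is homomorphic in its immediate subterms, so the equation reduces to the inductive hypotheses applied to those subterms; wherever an inserted $\prs$ or $\prp$ witness occurs I would invoke the strengthened hypothesis.

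The only genuinely delicate cases are the binding formers: $\Pi_{y\in A}B$, $\Sigma_{y\in A}B$, $\lambda y.b$, and the various eliminators binding a variable in their branches. Here I would first normalise by $\alpha$-conversion, choosing the bound variable $y$ to be distinct from $x$ and not free in $b$, which is legitimate because $(-)^\sqbullet$ is defined on $\alpha$-equivalence classes. Under this freshness assumption substitution commutes past the binder without capture, and since the interpretation maps the \mtt-binder on $y$ to the corresponding \hott-binder on the same interpreted variable $y^\sqbullet$, the equation again reduces to the inductive hypothesis on the body $B$ (resp. $b$). The cases involving the truncation-based eliminators of Definitions~\ref{disj} and~\ref{exist} combine both phenomena, namely a bound variable in a branch and an inserted $\isprop$-witness on the target type, and both are handled by the corresponding clauses of the strengthened hypothesis together with Lemma~\ref{setclosure}.

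I expect the main obstacle to be precisely the bookkeeping around the inserted proof-witnesses $\prs$ and $\prp$: unlike the purely homomorphic formers, these force the statement to be proved jointly with the companion equalities for $\prs$ and $\prp$, and one must check that the \hott\ proof-terms exhibited in Lemmas~\ref{firstcon} and~\ref{setclosure} are themselves built compositionally from the witnesses of their subterms, so that they too commute with substitution. Once this strengthened invariant is in place, every case is a routine unfolding of the defining clauses in Appendix A against the recursive definition of substitution.
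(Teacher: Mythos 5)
The paper states this lemma without any proof, treating it as an immediate consequence of the fact that $(-)^\sqbullet$ is defined by structural recursion, homomorphically, on the raw syntax; your structural induction (with $\alpha$-renaming at binders and the variable base case) is exactly the argument being left implicit, so the proposal is correct and follows the intended route. Your strengthening of the induction hypothesis to cover $\prs$ and $\prp$ is a genuine point of care that the paper glosses over: the proof-witnesses inserted into interpreted constructors (e.g.\ the $\isprop$ argument fed to $\mathsf{ind}_{\vert\vert\ \vert\vert}$, or the $\mathfrak{p}_{\times}$-terms in the encoding of small propositions) are built compositionally from the witnesses of subexpressions, and checking that they too commute with substitution is needed for the term cases to close.
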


\begin{theorem}[Validity]\label{mtt-val}
If $\mathcal{J}$ is a derivable judgement in \mtt, then the interpretation of $\mathcal{J}$ holds in \hott.
 Moreover, if $P\ prop\ [\Gamma]$ and $P\ prop_s\ [\Gamma]$ are derivable judgements in \mtt, then $\prs(P^\sqbullet):\isset(P^{\sqbullet})\ [\Gamma^{\sqbullet}]$ is derivable in \hott\ and $\prs(P^\sqbullet)\ \ourdef\ \mathfrak{s}_{coe}((P)^\sqbullet, \prp(P^\sqbullet))$.
\end{theorem}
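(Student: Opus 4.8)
The plan is to argue by a simultaneous induction on the derivation of the \mtt-judgement $\mathcal{J}$, carried out alongside the mutual induction on raw types, terms and their definitional equalities that underlies Definition~\ref{mtt-int}. It is essential that both displayed statements be proved at once: the ``Moreover'' clause is not a corollary but a strengthening of the inductive hypothesis. Indeed, in \mtt\ a (small) proposition may occur where a collection (resp. a set) is expected --- for instance a proposition viewed as the collection of its proofs, or the base type $A$ of an equality $\mathsf{Id}(A,a,b)$ --- and in such positions the interpretation demands a witness $\prs(P^\sqbullet):\isset(P^\sqbullet)$. Supplying this witness as $\mathfrak{s}_{coe}(P^\sqbullet,\prp(P^\sqbullet))$, via cumulativity of h-levels (Lemma~\ref{propintoset}), is exactly what keeps the induction alive through the constructors that coerce propositions into sets or collections.

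I would organise the inductive cases into four groups. The \emph{structural rules} (context formation, the variable rule and substitution) are discharged directly from the clauses of Definition~\ref{mtt-int} on contexts and from the substitution Lemma~\ref{sub-t}, together with the fact that \hott\ validates congruence of its judgements under substitution. For the \emph{formation rules}: for each set-constructor ($N_0, N_1, List, \Sigma, \Pi, +$) the interpretation is the homonymous \hott-type, which remains in $\funi$ and is an h-set by the corresponding component of Lemma~\ref{setclosure}, yielding the witness $\prs$; for collections one additionally interprets $\mathsf{prop_s}$ as $\propo$, an h-set in $\suni$ by Lemma~\ref{classifier} with witness $\mathfrak{s}_{\mathsf{Prop_0}}$, and $A\rightarrow\mathsf{prop_s}$ as the corresponding function type, again an h-set by Lemma~\ref{setclosure}. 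Universe bookkeeping is handled by \hott-cumulativity $\funi\subseteq\suni$, which also lets a small proposition interpreted in $\funi$ be re-read as a proposition in $\suni$.

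For the \emph{proposition and small-proposition formation rules} each logical constructor is interpreted by its \hott-counterpart and shown to be an h-proposition: $\wedge,\rightarrow,\forall$ via $\times,\rightarrow,\Pi$ using Lemma~\ref{firstcon}, $\vee$ and $\exists$ by their truncated definitions (Definitions~\ref{disj} and~\ref{exist}) with witnesses $\mathfrak{p}_\lor,\mathfrak{p}_\exists$, and $\mathsf{Id}(A,a,b)$ by Lemma~\ref{id}, which applies precisely because by inductive hypothesis $A^\sqbullet$ is an h-set --- here again invoking the strengthened clause when $A$ is itself a proposition. The encoding rules $\mathrm{Pr}_1$--$\mathrm{Pr}_7$ are interpreted as the construction of elements of $\propo$ (pairs of a $\funi$-type with its h-propositionality proof), the decoder $\tau$ as the first projection, and the equalities $\mathrm{eq}\text{-}\mathrm{Pr}_1$--$\mathrm{eq}\text{-}\mathrm{Pr}_7$ hold because that projection computes on these pairs. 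The \emph{introduction and elimination rules} for the set-constructors are interpreted by the native \hott-constructors, their $\beta$-equalities holding judgementally; the delicate cases are the eliminators of $\vee$, $\exists$ and of $\mathsf{Id}$, whose \mtt-elimination is restricted to propositions. This restriction dovetails with \hott: the truncated eliminators of Definitions~\ref{disj}--\ref{exist} require the target $C$ to carry $s:\isprop(C)$, which is supplied by the inductive hypothesis since the \mtt-target is a proposition; the attendant $\beta$-rules are Lemmas~\ref{betadisj},~\ref{betaex} and~\ref{beta}, while the replacement rule $\mathrm{repl}$ is validated by Lemma~\ref{sub-t} and congruence of judgemental equality in \hott\ (note that, \mtt\ lacking the $\xi$-rule, no equality rule of \mtt\ outruns \hott).

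The step I expect to be most delicate is the soundness of the \emph{type- and term-equality judgements}, which Definition~\ref{mtt-int} interprets as judgemental equalities of \emph{pairs} in $\set_{\funi}$, $\set_{\suni}$, $\propo$ and $\prop_{\suni}$. For such a pair-equality one must match not only the carriers $A^\sqbullet\equiv B^\sqbullet$ (obtained from the interpretation of the type equality) but also the second components $\prs(A^\sqbullet)\equiv\prs(B^\sqbullet)$ (resp. for $\prp$). I would secure this by defining $\prs$ and $\prp$ compositionally on the structure of the type, so that each \mtt-formation and equality rule is mirrored by the corresponding \hott-rule applied to the witnesses, keeping the two witness terms judgementally equal whenever the carriers are; the coercion witnesses introduced in the ``Moreover'' clause must then be checked to cohere with the directly defined ones in the overlapping case of small propositions, where both routes yield $\mathfrak{s}_{coe}$. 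Once the first statement is established, the ``Moreover'' clause follows at once: from $P\ prop\ [\Gamma]$ (resp. $P\ prop_s\ [\Gamma]$) the first part gives $\prp(P^\sqbullet):\isprop(P^\sqbullet)\ \gam$, whence Lemma~\ref{propintoset} yields $\prs(P^\sqbullet)\ourdef\mathfrak{s}_{coe}(P^\sqbullet,\prp(P^\sqbullet)):\isset(P^\sqbullet)\ \gam$.
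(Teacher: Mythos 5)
Your proposal is correct and follows essentially the same route as the paper: induction on the derivation, with sets/collections handled by Lemma~\ref{setclosure} and universe cumulativity, the subtyping rules by the coercion $\mathfrak{s}_{coe}$ of Lemma~\ref{propintoset}, the logical constructors by Lemmas~\ref{firstcon}, \ref{id}, \ref{betadisj}, \ref{betaex}, and the encoding/decoding of small propositions via pairs in $\propo$ and first projection, with the same crucial observation that the equality judgements on pairs go through because $\prs$ and $\prp$ are built from \emph{chosen} proof-terms compositionally. The only difference is one of exposition: you spell out the structural and $\mathrm{repl}$ cases that the paper leaves implicit.
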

    
\begin{proof}
The proof is by induction over the derivation of $J$. 

\noindent
The validity of judgements forming \mtt-sets follows from the definitions given above, the lemmas \ref{firstcon}, \ref{setclosure} 
and the closure of the first universe $\funi$ under set-theoretic constructors as in \cite{PMTT}. 

\noindent
The subtyping rules 

\begin{center}
    \AxiomC{$P\ prop_s\ [\Gamma]$}\RightLabel{${\bf prop_s\mbox{-}into\mbox{-}set}$}\UnaryInfC{$P\ set\ [\Gamma]$}\DisplayProof 
    \hskip1.0em\qquad
    \AxiomC{$P\ prop\ [\Gamma]$}\RightLabel{${\bf prop\mbox{-}into\mbox{-}col}$}\UnaryInfC{$P\ col\ [\Gamma]$}\DisplayProof
\end{center}
 are interpreted as follows: by induction hypothesis, $P^\sqbullet: \funi\ \gam$ and $\prp(P^\sqbullet): \isprop(P^\sqbullet)\ [\Gamma^\sqbullet]$; moreover, we have also $\prs(P^\sqbullet): \isset(P^\sqbullet)\ [\Gamma^\sqbullet]$, which is given  by $\mathfrak{s}_{coe}(P^\sqbullet, \prp(P^\sqbullet))$, and thus the conclusion follows. The other subtyping rule is validated by a similar argument. 

\noindent
The rules ${\bf prop_s\mbox{-}into\mbox{-}prop}$ and ${\bf set\mbox{-}into\mbox{-}col}$ are trivially validated by cumulativity of universes  and by definition of the interpretation. 

\noindent
The definition of the interpretation for judgemental equalities trivially validates the conversion rules of \mtt. In particular, those for \mtt-disjunction and existential quantifier follow from lemmas~\ref{betadisj} and \ref{betaex}.

\noindent
The collection of small proposition $\mathsf{prop_s}$ is interpreted as $\mathsf{Prop}_{\funi}: \suni$ with $\mathfrak{s}_{\mathsf{prop_0}}: \isset(\mathsf{Prop}_{\funi})$.

\noindent
Note that {\it the  validity of the  encoding of \mtt-small propositions
satisfies the usual compatibility rules} like
\begin{center}
    \AxiomC{$p_1=p_2\in \mathsf{prop_s} \ [\Gamma]$} \AxiomC{$q_1=q_2\in \mathsf{prop_s} \ [\Gamma]$}
\BinaryInfC{$p_1\land q_1= p_2\land q_2 \in \mathsf{prop_s}\ [\Gamma] $}
\DisplayProof
\vspace{0.5em}
\end{center}
since the interpretation of the encoding of small propositions into $\mathsf{Prop_s}$  is carried out by using the partial function $\prp(-)$  associating to the \hott-type $  \fpr(p^\sqbullet)\times \fpr(q^\sqbullet)$ 
the  proof-term  $\mathfrak{p}_{\times}(\fpr(p^\sqbullet),\fpr (q^\sqbullet ) , \spr(p^\sqbullet), \spr(q^\sqbullet)\, ):
\isprop( \fpr(p^\sqbullet)\times \fpr(q^\sqbullet)  )$.

In this sense {\it  the interpretation $(-)^\sqbullet$ depends on the chosen proof-terms of lemmas} \ref{id}, \ref{propintoset}, \ref{firstcon}, \ref{setclosure}, \ref{classifier} and {\it definitions} \ref{disj}, \ref{exist}. 

Moreover, the rule for the decoding operator $\tau$

\begin{center}
    \AxiomC{$p\in \mathsf{prop_s}\ [\Gamma]$}
    \RightLabel{\mbox{$\tau$-Pr} }
    \UnaryInfC{$\tau(p)\ prop_s\ [\Gamma]$}
    \DisplayProof
\end{center}
 is validated by our interpretation, since the premise is interpreted as $p^\sqbullet: \mathsf{Prop}_{\funi}\ [\Gamma^\sqbullet]$ and thus it follows that $\fpr(p^\sqbullet): \funi\ [\Gamma^\sqbullet]$ and $\spr(p^\sqbullet): \isprop(\fpr(p^\sqbullet))\ [\Gamma^\sqbullet]$, which is the interpretation of the conclusion by our definition. 

\noindent
Then, observe that the encoding rules are validated by construction. We just spell out the validity of the rule
\begin{center}
    \AxiomC{$p\in \mathsf{prop_s} \ [\Gamma]$}
\AxiomC{$q\in \mathsf{prop_s}\ [\Gamma] $}
\RightLabel{\mbox{Pr$_4$}}
\BinaryInfC{$p\widehat{\land} q\in \mathsf{prop_s}\ [\Gamma]$}
\DisplayProof
\vspace{0.5em}
\end{center}
  We know that $(p\in \mathsf{prop_s} \ [\Gamma] )^\sqbullet\ \ourdef \ p^\sqbullet: \mathsf{Prop}_{\funi}\ \gam$ and  that
   $ (q\in \mathsf{prop_s} \ [\Gamma] )^\sqbullet\ \ourdef \ 
 q^\sqbullet: \mathsf{Prop}_{\funi}\ \gam$ by inductive hypothesis.  Hence, $\fpr(p^\sqbullet): \funi\ \gam$ and $\fpr(q^\sqbullet): \funi\ \gam$ with $\spr(p^\sqbullet): \isprop(\fpr(p^\sqbullet))\ \gam$ and $\spr(q^\sqbullet): \isprop (\fpr(p^\sqbullet))\ \gam$, from which we can derive $\fpr(p^\sqbullet)\times \fpr(q^\sqbullet): \funi\ \gam$ with $\mathfrak{p}_{\times}(\fpr(p^\sqbullet), \fpr(q^\sqbullet), \spr(p^\sqbullet), \spr(q^\sqbullet)): \isprop( \fpr(p^\sqbullet)\times \fpr(q^\sqbullet))\ \gam$. This lets us conclude that  $(p\widehat{\land} q\in \mathsf{prop_s}\ [\Gamma])$  is well defined.

 \noindent
 Finally, the conversion rules associated to the decoding operator are all easily validated by construction as well. We just spell out the validity of the rule
 
\begin{center}
    \AxiomC{$p\in \mathsf{prop_s}\ [\Gamma]$}
    \AxiomC{$q\in \mathsf{prop_s}\ [\Gamma]$}
    \RightLabel{eq-\mbox{Pr$_4$ }}
    \BinaryInfC{$\tau(p \widehat{\land} q)= \tau(p) \land \tau (q)\ prop_s \ [\Gamma]$}
    \DisplayProof
\end{center}
 Indeed, assuming the premises as valid, we get that, since $(\tau(p \widehat{\land} q) [\Gamma])^\sqbullet\  \ourdef\ \fpr (\, (p \widehat{\land} q)^\sqbullet \, ): \funi\ [\Gamma^\sqbullet]$ with $\spr(\, (p \widehat{\land} q)^\sqbullet \, ): \isprop(\fpr (\, (p \widehat{\land} q)^\sqbullet \, ))$, but 
 $\fpr (\,  (p \widehat{\land} q)^\sqbullet \, ) \equiv (\fpr(p^\sqbullet) \times \fpr (q^\sqbullet)): \funi \ \gam $ 
and $ (\tau(p) \land \tau (q)\ [\Gamma])^\sqbullet\ \ourdef \ (\fpr(p^\sqbullet) \times \fpr (q^\sqbullet)): \funi\ [\Gamma^\sqbullet]$ with $\spr((\tau(p) \land \tau (q)\ [\Gamma])^\sqbullet ): \isprop(\fpr(p^\sqbullet) \times \fpr(q^\sqbullet))$,
 then the validity of the definitional equality $\tau(p \widehat{\land} q)= \tau(p) \land \tau (q)\ prop_s \ [\Gamma]$   trivially follows.

\end{proof}

 \begin{remark}
 The interpretation of two definitionally equal \mtt-types results in definitionally equal pairs in \hott- that is, not only the corresponding types in \hott\ are definitionally equal, but also the associated proof-terms witnessing that such types are h-sets or h-propositions. The fact that the interpretation depends on chosen proof-terms as observed above is fundamental to achieve this result. Also the validity of the coercion of propositions into sets relies on this fact. 
 \end{remark}
 
\begin{remark}[\it Alternative interpretation of \mtt\ in \hott] Observe that it is possible to define an alternative interpretation of \mtt\ in \hott\, which also implies the compatibility of the first with the latter. In this interpretation, we take the truncated version of {\it all} h-propositions as interpretations of \mtt-propositions. This choice will be compulsory later when we will define the interpretation for \emtt, since there we shall take into account \can\ isomorphisms between h-propositions. We refer to the Appendix B for the definition of this alternative interpretation. 
 \end{remark}

\section{Canonical isomorphisms  and the category  ${\mathsf{Set}_{mf}}/{\cong_c}$}
In this section, we inductively define a set of \can\ isomorphisms over \hott\  in order to be able to define a category, 
called $\setis$, of h-sets and functions up to canonical isomorphisms.
 This category could be  formalized within \hott\  as a H-category in the sense of \cite{Pal17}  provided that we extend \hott\ with the inductive type of canonical isomorphisms,
 or, alternatively, it could be simply defined in the meta-theory as done in \cite{m09}.
The category $\setis$ will be  used to interpret the extensional level of \emtt\ in \hott: its role will be the same as that of the category \cqis\  built over \mtt\ in \cite{m09} to interpret
\emtt\ within \mtt.

\begin{definition}\label{iso}
An indexed isomorphism $\mu_{A}^{B}:A\rightarrow B \  [\Gamma]$ is an isomorphism from the h-set $A$ to the h-set $B$ under the context $\Gamma$ with an inverse $(\mu_A^B)^{-1}: B\rightarrow A\ [\Gamma]$ which satisfies

\begin{equation*}
     \Pi_{x:A}\ \mathrm{Id}_{A}(  \ ( \mu_A^B)^{-1}(\mu_{A}^{B}(x)), x)\ \times \ \Pi_{y:B}\ \mathrm{Id}_{B}(\mu_{A}^{B}(\mu_A^B)^{-1}(y)), y)
\end{equation*}

\end{definition}

\begin{definition}
Given a dependent type $B \ [\Gamma]$ let us define the notion
of {\it transport} by induction on the number of assumptions in $\Gamma$:

\begin{enumerate}
\item 
If $\Gamma$ is empty, there are no  transports;
\item
If $\Gamma\ \ourdef \Delta, x: E$  and $B\ \ourdef\  C(x)$ then
a  {\it  transport operation } it is simply
$$ \trasp^1(p, -): C(x)\ \rightarrow C(x') \ [\Delta, x: E, x': E, p: x=_E x'  ]$$
where $\trasp^1(p, -)\ \ourdef \trasp(p, -)$ and $\trasp(p, -)$ is the usual transport
map as given in Section 2.

\item
If $\Gamma\ \ourdef \Delta, x: E, y_1:D_1, \dots y_n: D_n$  with $n\geq 1$ and $B\ \ourdef\  C(x, y_1,\dots, y_n)$ then
a  {\it  transport  operation } it is simply
$$ \begin{array}{l}
\trasp^{n+1}(p, -): C(x,y_1,\dots, y_n) \ \rightarrow\  C(x', \trasp^1(p,y_1), \dots
\trasp^{n} (p, y_n)) \\
\qquad [\Delta, x: E, y_1:D_1, \dots y_n: D_n,  x': E, p: x=_E x'  ]\end{array}$$
where $\trasp^{n+1}(p, -)\ \ourdef \ \mathsf{ind}_{Id} (p, x. (\lambda w. w))$ is defined by
eliminating toward 
$$C(x,y_1,\dots, y_n) )\ \rightarrow\  C(x', \trasp^1(p, y_1), \dots,
\trasp^{n}(p, y_n)) $$
\end{enumerate}
\end{definition}

To avoid an heavy notation in the following we simply write  $\trasp (p,-)$
instead of $\trasp^n (p,-)$ when it is clear from the context which is the transport map.

\begin{remark}
Since we are concerned with h-sets $A: \guni\ [\Gamma]$, the transport operations $\trasp(p,-)$ do not depend on the proof-term $p$. 
\end{remark}

\begin{lemma}\label{isoext}
If $\mu_A^{B}: A\rightarrow B\ [\Gamma]$ and $\nu_A^{B}: A\rightarrow B\ [\Gamma]$ are indexed isomorphisms, and for any $x:A$, $\mu_A^{B}(x) =_{B} \nu_A^{B}(x)$, then $\mu_A^{B} =_{A\rightarrow B} \nu_A^{B}$ holds.
\end{lemma}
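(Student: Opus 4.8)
The plan is to recognise this lemma as a direct manifestation of the function-extensionality principle $\mathsf{funext}$ recorded above, and to note that the isomorphism structure carried by $\mu_A^B$ and $\nu_A^B$ plays no role at all: the conclusion $\mu_A^B =_{A\rightarrow B}\nu_A^B$ compares only the underlying forward maps inside the function type $A\rightarrow B$, so the inverses and the coherence data supplied by Definition~\ref{iso} can be set aside entirely. Since by that definition $A$ and $B$ are h-sets (living at most in $\suni$), the instance of $\mathsf{funext}$ needed here is exactly the one the preliminaries guarantee is available.

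First I would unpack the hypothesis. The assumption that $\mu_A^B(x) =_B \nu_A^B(x)$ for every $x:A$ is read, under the proofs-as-terms discipline of \hott, as the availability of a single dependent function
\[
h\ :\ \Pi_{x:A}\ \mathrm{Id}_{B}(\mu_A^B(x),\ \nu_A^B(x))\ [\Gamma],
\]
that is, a pointwise homotopy between the two maps, living in the ambient context $\Gamma$. Here I treat $A\rightarrow B$ as the non-dependent instance $\Pi_{x:A}B$ of the dependent product, taking the constant family $B(x)\ourdef B$.

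Next I would apply $\mathsf{funext}$ to $h$. Instantiating the function-extensionality principle at $f\ourdef\mu_A^B$, $g\ourdef\nu_A^B$ and the constant family $B$, the map $\mathsf{funext}$ sends $h$ to a term of type $\mu_A^B =_{\Pi_{x:A}B}\nu_A^B$, which is just $\mu_A^B =_{A\rightarrow B}\nu_A^B$. As $\mathsf{funext}$ is available uniformly in an arbitrary context, the derivation goes through under $\Gamma$, producing precisely the claimed equality.

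I do not expect any genuine obstacle: the only point deserving a word of care is the reading of the hypothesis as one dependent function $h$ rather than a collection of separate, unrelated proofs, since it is this uniform $h$ that makes $\mathsf{funext}$ applicable. As a minor aside one could observe that, because $B$ is an h-set, each identity type $\mathrm{Id}_{B}(\mu_A^B(x),\nu_A^B(x))$ is an h-proposition by Lemma~\ref{id}, so such a homotopy $h$ is in fact unique whenever it exists; but this remark is not needed for the statement as phrased.
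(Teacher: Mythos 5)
Your proof is correct and matches the paper's own argument, which simply invokes function extensionality applied to the pointwise homotopy; your additional remarks on unpacking the hypothesis as a single dependent function are accurate elaboration of the same idea.
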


\begin{proof}
The statement follows immediately from function extensionality.
\end{proof}

In the following we give a definition of canonical isomorphisms between
dependent h-sets. This definition is meant to  {\it generalize} the notion
of transport between dependent types on equal elements, by enlarging the notion
of equality to include that among arbitrary truncated h-propositions which are equivalent.

\begin{remark}
It must be stressed that \can\ isomorphisms do not coincide with all the isomorphisms, because they need to preserve their canonical elements. On the other hand, assuming that for any type in the universe $\mathcal{U}_0$ and $\mathcal{U}_1$ the associated identity map is a canonical isomorphism yields a contradiction in presence of the Univalence Axiom. We thank one of the anonymous referee for this last observation.
\end{remark}

To this purpose we first introduce an inductive universe of h-sets  (within  $\mathcal{U}_1$)  equipped with an inductive elimination, formally given as an inductive-recursive definition added to \hott,  but  it would be enough to define it in the meta-theory~\footnote{This approach is taken because $\suni$ lacks an inductive elimination which would be contradictory with the Univalence Axiom.}. This universe will be used to interpret sets of \emtt:

 \begin{definition}\label{setem}
 Let $\setem$ be the type inductively generated from the following inductive clauses: 
 \begin{itemize}

\item[-] 
 If  $A\ \ourdef \ \propo$,  or  $A\ \ourdef \ \bf 0 $,  or
$A\ \ourdef \ \bf 1 $,  or $A\ \ourdef \ \bf \mathbb{N} $ then $A: \setem\ [ \Gamma]$ for any context $\Gamma$.

\item[-]  $ \vert\vert  B \vert\vert : \setem\ [\Gamma]$  for  any type  $B:\suni\ [\Gamma]$.

\item[-]  $\ \Sigma_{x:B}\ C(x): \setem \ [\Gamma] $   for any $B : \setem\  [\Gamma]$  and  $C(x) : \setem\  [\Gamma, x:B]$.

\item[-] $\ \Pi_{x:B}\ C(x): \setem \ [\Gamma] $   for any $B : \setem\  [\Gamma]$  and  $C(x) : \setem\  [\Gamma, x:B]$.

\item[-] $B+C :\setem \ [\Gamma] $   for any $B: \setem\  [\Gamma]$  and  $C:\setem\  [\Gamma]$.

 \item[-] $\mathsf{List}(B): \setem \ [\Gamma] $   for any $B: \setem\  [\Gamma]$.

\item[-] $B/R:\setem\ [\Gamma]$ for any h-set $B:\funi\ [\Gamma]$  and  an equivalence relation $R: \propo\ [\Gamma, x: B,y:B] $  such that  $B: \setem\  [\Gamma]$ and
 $R(x,y): \setem\ [\Gamma, x: B,y:B] $.

\end{itemize}

 \end{definition}

Then, we are ready to define by recursion on $\setem$ the type $\mathsf{Ciso}(A,B)$ of canonical isomorphisms between h-sets A and B in $\setem$ as a subtype of $A \to B$. Formally, it is given again as an inductive-recursive definition, where each $\mathsf{Ciso}(A,B)$ is thought as a set of codes, together with a decoding function from $ \mathsf{Ciso}(A,B)$ to $ A \to B$.

 \begin{definition}\label{caniso}
The type of indexed canonical isomorphisms $\mu_{A_1}^{A_2}:A_1\rightarrow A_2 \  [\Gamma]$  is the type  inductively generated from the following inductive clauses: 

\begin{itemize}

\item[-]  If  $A\ \ourdef \ \propo$,  or  $A\ \ourdef \ \bf 0 $,  or
$A\ \ourdef \ \bf 1 $,  or $A\ \ourdef \ \bf \mathbb{N} $, then
the identity morphism $\mathsf{id}_A^A\ \ourdef \lambda x.x: A\ \rightarrow \  A \ [\Gamma]$ is a \can\ isomorphism, which is trivially an isomorphism whose inverse
${\mu_{A_1}^{A_2}}^{-1}$  is the identity.
    
\item[-] If $A_1\ \ourdef\ \vert\vert  B_1 \vert\vert : \guni$ and $A_2\ \ourdef\ \vert\vert  B_2 \vert\vert : \guni$,  then any isomorphism (with a chosen inverse)  $\mu_{\vert\vert B_1\vert\vert}^{\vert\vert B_2\vert\vert}: \vert\vert B_1\vert\vert \rightarrow \vert\vert B_2\vert\vert\ [\Gamma] $ is canonical and we denote the chosen inverse with  ${\mu_{A_1}^{A_2}}^{-1}$.

    \item[-] If $A_1\ \ourdef\ \Sigma_{x:B_1}\ C_1(x)\ [\Gamma]$ and $A_2\ \ourdef\ \Sigma_{x':B_2}\ C_2(x')\ [\Gamma]$
    and  $\mu_{B_1}^{B_2}: B_1 \rightarrow  B_2\ [\Gamma]$ and $\mu_{C_1(x)}^{C_2(\mu_{B_1}^{B_2}(x))} :  C_1(x)\ \rightarrow  \  C_2(\mu_{B_1}^{B_2}(x)) \ [\Gamma, x:B_1] $ 
    are \can\ isomorphisms, then any function
    $$\mu_{\Sigma_{x:B_1}\ {C_1}(x)}^{\Sigma_{x':B_2}\ C_2(x')  } : \Sigma_{x:B_1} \  C_1(x)\ \rightarrow \ \Sigma_{x':B_2}\ C_2 (x')\ [\Gamma]$$ such that
    $$\mu_{\Sigma_{x:B_1}\ {C_1}(x)}^{\Sigma_{x':B_2}\ C_2(x')  } (z)\ =\ ( \mu_{B_1}^{B_2}(\fpr(z))\, ,
    \mu_{C_1(\fpr(z))}^{C_2(\mu_{B_1}^{B_2} (\fpr(z) ))}(\spr(z) )\, )$$ for any $z: \Sigma_{x:B_1}\ C_1(x)$, is a \can\ isomorphism with inverse 
    
    \begin{equation*}
     {\mu_{A_1}^{A_2}}^{-1}\ \ourdef\  \lambda z. (\ {\mu_{B_1}^{B_2}}^{-1}(\fpr (z))\ , \ (
    { \mu_{C_1({\mu_{B_1}^{B_2}}^{-1}(\fpr(z)))}^{C_2(\mu_{B_1}^{B_2}({{\mu_{B_1}^{B_2}}^{-1} (\fpr(z)))}  }})^{-1} \circ \trasp({p_{\mu}}^{-1}, -) ( \spr(z)) \, )
    \end{equation*}
    where $p_{\mu}: \mu_{B_1}^{B_2}({\mu_{B_1}^{B_2}}^{-1}(\fpr(z)) =_{B_2} \fpr(z) $ and
    provided that $\mu_{B_1}^{B_2}$ and $\mu_{C_1(x)}^{C_2(\mu_{B_1}^{B_2}(x))}$ come equipped with inverses  ${\mu_{B_1}^{B_2}}^{-1}$ and
   $ {\mu_{C_1(x)}^{C_2(\mu_{B_1}^{B_2}(x))}}^{-1}$ respectively.
   
    \item[-] If $A_1\ \ourdef\ \Pi_{x:B_1}\ C_1(x)\ [\Gamma]$ and $A_2\ \ourdef\ \Pi_{x':B_2}\ C_2(x')\ [\Gamma]$
    and  $\mu_{B_1}^{B_2}: B_1 \rightarrow  B_2\ [\Gamma]$
    and  $\mu_{C_1(x)}^{C_2(\mu_{B_1}^{B_2}(x))} :  C_1(x)\ \rightarrow  \ C_2(\mu_{B_1}^{B_2}(x)) \ [\Gamma, x:B_1] $ are \can\ isomorphisms, then any function $$\mu_{\Pi_{x:B_1}\ C_1(x)}^{\Pi_{x': B_2}\ C_2(x')} : \Pi_{x:B_1}\ C_1(x)\rightarrow \Pi_{x':B_2}\ C_2(x')\ [\Gamma]$$  such that
    $$\mu_{\Pi_{x:B_1}\ C_1(x)}^{\Pi_{x': B_2}\ C_2(x')}(f)\ =\ \lambda x': B_2. (\ \trasp (p_\mu\ ,-)\circ  
    \mu_{C_1(\mu_{B_1}^{B_2^{-1}}(x'))}^{C_2(  \mu_{B_1}^{B_2} (\mu_{B_1}^{B_2^{-1}}(x') )\,)} \ ) (f (\mu_{B_1}^{B_2^{-1}}(x'))\ )$$  is a \can\ isomorphism, for any $f: \Pi_{x:B_1}\ C_1(x)$ and for any $p_\mu \ :\mu_{B_1}^{B_2}(\mu_{B_1}^{B_2^{-1}}(x') )=_{B_2}x'$
where the body after the lambda is the arrow
$$\xymatrix@+4pt{
C_1(  \mu_{B_1}^{B_2^{-1}}(x') )  \ar[rrr]^{ \qquad \mu_{C_1(\mu_{B_1}^{B_2^{-1}}(x') )}^{C_2(\mu_{B_1}^{B_2}((\mu_{B_1}^{B_2^{-1}}(x') ) ))} \qquad} & &&C_2(  \mu_{B_1}^{B_2} (\mu_{B_1}^{B_2^{-1}}(x') )\,) \ar[rr]^{\qquad \trasp( p_\mu, -)} &&C_2(  x') 
  }$$
applied to the value $f (\mu_{B_1}^{B_2^{-1}}(x'))$. The associated inverse is given by

\begin{equation*}
  (\mu_{A_1}^{A_2})^{-1} =\ \lambda f'. \lambda x: B_1.  (\ ( \mu_{C_1 \ ( x)}^{C_2(  {\mu_{B_1}^{B_2}}(x))})^{-1} 
    (f' ({\mu_{B_1}^{B_2}}(x))\ ))
\end{equation*}
  
provided that  $\mu_{B_1}^{B_2}$ and $\mu_{C_1(x)}^{C_2(\mu_{B_1}^{B_2}(x))}$ come equipped with inverses  ${\mu_{B_1}^{B_2}}^{-1}$ and
   $ {\mu_{C_1(x)}^{C_2(\mu_{B_1}^{B_2}(x))}}^{-1}$ respectively.
    
    \item[-] If $A_1\ \ourdef\ B_1+C_1$ and $A_2\ \ourdef\ B_2 +C_2$ and $\mu_{B_1}^{B_2}: B_1\rightarrow B_2\ [\Gamma]$ and $\mu_{C_1}^{C_2}: C_1\rightarrow C_2\ [\Gamma]$ are \can\ isomorphisms, then any function $$ \mu_{B_1 + C_1}^{B_2 + C_2}: B_1 + C_1 \rightarrow B_2 + C_2\ [\Gamma]$$ such that
    $$\mu_{B_1 + C_1}^{B_2 + C_2}(z)\ =\ \mathsf{ind}_+ (z, z_0. \mathsf{inl}(\mu_{B_1}^{B_2}(z_0)), z_1.\mathsf{inr}(\mu_{C_1}^{C_2}(z_1)))$$ for any $z: B_1 + C_1$, is a \can\ isomorphism with inverse
    
    \begin{equation*}
    {\mu_{A_1}^{A_2}}^{-1}\ =\ \lambda z.\mathsf{ind}_+ (z, z_0. \mathsf{inl}({\mu_{B_1}^{B_2}}^{-1}(z_0)), z_1.\mathsf{inr}({\mu_{C_1}^{C_2}}^{-1}(z_1)))
    \end{equation*}
    
    provided that $\mu_{B_1}^{B_2}$ and $\mu_{C_1}^{C_2}$ come equipped with inverses ${\mu_{B_1}^{B_2}}^{-1}$ and ${\mu_{C_1}^{C_2}}^{-1}$.

    \item[-] If $A_1\ \ourdef\ \mathsf{List}(B_1)$ and $A_2\ \ourdef\ \mathsf{List}(B_2)$
    and  $\mu_{B_1}^{B_2}: B_1 \rightarrow B_2\ [\Gamma]$ is a \can\ isomorphism,  then any function $$\mu_{\mathsf{List}(B_1)}^{\mathsf{List}(B_2)}: \mathsf{List}(B_1)\rightarrow \mathsf{List}(B_2)\ [\Gamma]$$ such that
    $$\mu_{\mathsf{List}(B_1)}^{\mathsf{List}(B_2)}(z)\ =\ \mathsf{ind}_{\mathsf{List}}(z, \epsilon, (x,y,z).\mathsf{cons}(z, \mu_{B_1}^{B_2}(y)) $$ for any $z : \mathsf{List}(B_1)$, is a \can\ isomorphism with inverse
    
    \begin{equation*}
   {\mu_{A_1}^{A_2}}^{-1}\ =\ \lambda z.  \mathsf{ind}_{\mathsf{List}}(z, \epsilon, (x,y,z).\mathsf{cons}(z, {\mu_{B_1}^{B_2}}^{-1}(y))
\end{equation*}     
    
    provided that $\mu_{B_1}^{B_2}$ comes equipped with inverse ${\mu_{B_1}^{B_2}}^{-1}$.

    \item[-] If $A_1\ \ourdef\ B_1/R_1$ and  $A_2\ \ourdef\ B_2/R_2$, for $R_1, R_2$ equivalence relations,  and  $\mu_{B_1}^{B_2}: B_1\ \rightarrow \ B_2\ [\Gamma]$ is a \can\ isomorphism and $R_1(x,y)\leftrightarrow R_2( \mu_{B_1}^{B_2} (x)\, , \mu_{B_1}^{B_2} (y)) \ [\Gamma , x: B_1, y: B_1] $ holds,  then any function $$\mu_{B_1/R_1}^{B_2/R_2}\, :\, B_1/R_1\ \rightarrow\ B_2/R_2 \ [\Gamma]$$ such that
    $$\mu_{B_1/R_1}^{B_2/R_2}(z)\ =\ \mathsf{ind}_{Q}(z,  x.\mu_{B_1}^{B_2}(x))$$ for any $z: B_1/R_1$, is a \can\ isomorphism with inverse
    
    \begin{equation*}
    {\mu_{A_1}^{A_2}}^{-1}\ =\ \lambda z.  \mathsf{ind}_{Q}(z,  x.{\mu_{B_1}^{B_2}}^{-1}(x))
    \end{equation*}
    
    provided that $\mu_{B_1}^{B_2}$ comes equipped with an inverse ${\mu_{B_1}^{B_2}}^{-1}$.
    
\end{itemize}

\end{definition}

\begin{lemma}
Canonical isomorphisms are closed under substitution: if $ {\mu}_{A}^{B}: A \  \rightarrow B\ [\Gamma]$ is a \can\ isomorphism and 
$\Gamma\ \ourdef\ \Delta, x : E, y_1: C_1,\ldots, y_n: C_n$
then the result $$\begin{array}{l}{\mu}_{A}^{B}[e/x] [y'_i/y_i]_{i=1,\dots,n} :  \
 A[e/x] [y'_i/y_i]_{i=1,\dots,n}\  \rightarrow\ B[e/x] [y'_i/y_i]_{i=1,\dots,n}\\[5pt]
\qquad \qquad\qquad\qquad\qquad\qquad [\Delta, y'_1: C_1[e/x], \dots, y'_n: C_n[ e/x] [y'_i/y_i]_{i=1,\dots,n-1} ]\end{array}$$
of the substitution in $\mu_{A}^{B}$ of the variable $x$ with
$e: E\ [\Delta]$  is a \can\ isomorphism.
\end{lemma}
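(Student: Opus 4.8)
The plan is to proceed by induction on the derivation witnessing that $\mu_A^B$ is a canonical isomorphism, that is, on the inductive clauses of Definition~\ref{caniso}. The guiding idea is that substitution is a homomorphism with respect to all the type- and term-formers of \hott\ appearing in that definition, so that applying the substitution $[e/x][y_i'/y_i]_{i=1,\dots,n}$ to a canonical isomorphism produced by a given clause yields exactly the canonical isomorphism that the same clause produces from the substituted components. Throughout I would freely use the standard fact that substitution commutes with $\Sigma$, $\Pi$, $+$, $\mathsf{List}$, $/R$ and $\vert\vert-\vert\vert$, with the eliminators $\mathsf{ind}_{\Sigma}, \mathsf{ind}_{+}, \mathsf{ind}_{\mathsf{List}}, \mathsf{ind}_{Q}$, and with transport, according to the notion of substitution fixed in the appendix of \cite{hottbook}; this is the analogue for \hott-raw-syntax of Lemma~\ref{sub-t}.

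For the base clause, where $A$ is one of $\propo$, $\bf 0$, $\bf 1$, $\mathbb{N}$ and $\mu_A^A = \mathsf{id}_A$, there is nothing to check: these types are closed and hence unaffected by the substitution, while the identity map together with its identity inverse is carried to itself, which is again the canonical isomorphism prescribed by the very same clause.

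For each of the inductive clauses the argument is uniform. The isomorphism $\mu_A^B$ is assembled from component canonical isomorphisms on strictly smaller data, for instance $\mu_{B_1}^{B_2}$ and $\mu_{C_1(x)}^{C_2(\mu_{B_1}^{B_2}(x))}$ in the $\Sigma$- and $\Pi$-cases, each living over a context of the same shape $\Delta, x: E, \dots$. By the induction hypothesis, substituting $[e/x][y_i'/y_i]_{i=1,\dots,n}$ into each component again yields a canonical isomorphism over the substituted context. It then remains to verify that the substituted defining equation for $\mu_A^B$ coincides with the defining equation obtained by feeding these substituted components into the same clause. Since those equations are built solely from the components, from projections and eliminators, and from transport, and since substitution commutes with all of these, the two expressions agree; the identical computation applied to the displayed formula for $(\mu_A^B)^{-1}$ shows that the substituted map comes equipped with the prescribed inverse, so the clause is instantiated correctly.

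The only place where genuine care is needed is in the dependent clauses for $\Sigma$ and $\Pi$, where the codomain family $C_2$ is reindexed along $\mu_{B_1}^{B_2}$ and the action of $\mu_A^B$ involves the transport maps $\trasp(p_\mu,-)$. Here one must check that substitution commutes with the reindexing, i.e. that $\big(C_2(\mu_{B_1}^{B_2}(x))\big)[e/x][y_i'/y_i]$ agrees with $C_2[e/x][y_i'/y_i]\big(\mu_{B_1}^{B_2}[e/x][y_i'/y_i](x)\big)$, and likewise for the transports occurring in the formulas. This is where the Remark that for h-sets the transport operations $\trasp(p,-)$ do not depend on the proof-term $p$ is convenient: it removes any worry about substituting inside the equality proofs $p_\mu$, so that commuting substitution past transport reduces to the already-established fact that substitution commutes with the underlying type-formers. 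Granting these routine verifications, every clause is preserved by substitution and the induction goes through.
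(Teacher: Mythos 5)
Your proposal is correct and follows exactly the route the paper takes: the paper's own proof consists of the single sentence that the result follows by structural induction over the definition of canonical isomorphism, and your argument is a faithful, more detailed unfolding of that induction (base clauses on closed ground types, inductive clauses via commutation of substitution with the type formers, eliminators, and transports). The extra care you devote to the dependent $\Sigma$- and $\Pi$-clauses and the proof-irrelevance of transport on h-sets is consistent with, and a useful elaboration of, what the paper leaves implicit.
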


\begin{proof}
The proof is by structural induction over the definition of \can\ isomorphism.
\end{proof}

\begin{lemma}\label{caninv}
Any h-set $A \ [\Gamma]$  of \hott\  in $\setem$ has canonical transport operations.
\end{lemma}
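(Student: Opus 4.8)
The plan is to argue by structural induction on the derivation witnessing $A:\setem\ [\Gamma]$ according to the clauses of Definition~\ref{setem}, proving at each step that the transport map $\trasp(p,-)$ attached to $A$ (viewed as a dependent type along the leading context variable, with the remaining variables transported as prescribed) is realized by a \can\ isomorphism in the sense of Definition~\ref{caniso}. Two standing simplifications would be used throughout: first, since every type in $\setem$ is an h-set, the preceding remark guarantees that $\trasp(p,-)$ does not depend on the chosen proof $p$; second, by Lemma~\ref{isoext} two indexed isomorphisms between h-sets coincide as soon as they agree pointwise, so it suffices to match $\trasp(p,-)$ with the prescribed canonical form up to pointwise propositional equality, and likewise to identify its inverse with $\trasp(p^{-1},-)$.

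For the base clauses $A\ourdef\propo$, $A\ourdef\mathbf{0}$, $A\ourdef\mathbf{1}$ and $A\ourdef\mathbb{N}$ the type does not mention the context at all, so $\trasp(p,-)$ is pointwise equal to the identity (by the constancy of transport in a closed family) and is therefore, by the first clause of Definition~\ref{caniso} together with Lemma~\ref{isoext}, a \can\ isomorphism. For the truncation clause $A\ourdef\vert\vert B\vert\vert$ the map $\trasp(p,-):\vert\vert B\vert\vert\rightarrow\vert\vert B\vert\vert$ between the two instances of the family is an equivalence whose inverse is $\trasp(p^{-1},-)$, hence an isomorphism between truncated types; by the second clause of Definition~\ref{caniso} every such isomorphism is \can.

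The inductive clauses ($\Sigma$, $\Pi$, $+$, $\mathsf{List}$ and $B/R$) form the substance of the argument. In each case I would invoke the standard computation of transport in the corresponding type former from Chapter~2 of \cite{hottbook}: transport in a $\Sigma$-type splits into transport on the base paired with transport on the fibre; transport in $B+C$ commutes with the two injections; transport on $\mathsf{List}(B)$ acts componentwise; and transport through a quotient $B/R$ is the map induced on equivalence classes by transport on $B$ via the quotient eliminator. In every case this exhibits $\trasp(p,-)$ in precisely the shape prescribed by the matching clause of Definition~\ref{caniso}, the component maps being the transports belonging to the constituent families; by the induction hypothesis those component transports are already \can\ isomorphisms, so the relevant clause applies and yields that $\trasp(p,-)$ is \can, its inverse agreeing with $\trasp(p^{-1},-)$ by Lemma~\ref{isoext}.

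I expect the main obstacle to lie in the $\Sigma$ and $\Pi$ clauses, where the fibre $C(x)$ genuinely depends on the base variable and the canonical form of Definition~\ref{caniso} carries the nested transports $\trasp(p_\mu,-)$ and the inverses $(\mu_{B_1}^{B_2})^{-1}$ that re-thread this dependency. Matching the genuine transport against this prescribed form amounts to a coherence between iterated transports---exactly the content of the HoTT computation of transport in dependent pairs and functions---which is cleanest to discharge by reducing, via function extensionality, Lemma~\ref{isoext} and Lemma~\ref{apd-tr} for the dependent-function case, to a single pointwise identity. Finally, the bookkeeping demanded by contexts of length $n\geq 1$, in which the auxiliary variables $y_1,\dots,y_n$ are themselves transported, introduces no new idea but calls for an inner induction on $\Gamma$ reducing the higher transports $\trasp^{n+1}$ to the one-step transports treated above.
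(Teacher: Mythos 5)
Your proposal is correct and follows essentially the same route as the paper: induction on the formation of the type in $\setem$, computing transport in each type former via the standard HoTT identities ($\Sigma$ componentwise, $\Pi$ by precomposition with $\trasp(p^{-1},-)$ and postcomposition with $\trasp(p,-)$ on the fibre, $+$ and $B/R$ via their eliminators) and matching the result against the corresponding clause of Definition~\ref{caniso} using the inductive hypothesis on the constituent families. The paper likewise treats only the one-step transports $\trasp^1(p,-)$ in detail and dismisses the higher $\trasp^n(p,-)$ as analogous, just as you propose with your inner induction on $\Gamma$.
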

\begin{proof}
By induction  on the formation of the type. Here, we just show that the transport operations
of the form $\trasp^1(p,-)$ are canonical for some type constructors
 since the canonicity of those of the form $\trasp^n(p,-)$ follows analogously for 
 all the types.
 
\begin{itemize}

\item[-] Non dependent ground types have just the identities as transport operations and these are canonical by definition \ref{caniso}.

\item[-] If $A\ \ourdef\ \vert\vert B\vert\vert$ and $\Gamma\ \ourdef\ \Delta, x:E$, then transport operations are \can\ by definition \ref{caniso}, since they are isomorphisms.

\item[-] If  $A\ \ourdef\ \Sigma_{y:B} \ C(y)$ and $\Gamma\ \ourdef\ \Delta, x:E$, then $$\trasp (p, z): A\rightarrow A[x'/x]\ [\Delta, x:E, x': E, p: x=_E x']$$ satisfies $$\trasp (p,z) = (\ \trasp(p, \fpr(z))\ ,\  \trasp (p, \spr(z)) \ )$$ which follows by $\mathrm{Id}$-elimination and is \can\ by definition \ref{caniso}, since by inductive hypothesis the  transport operations of $B$ and $C(y)$  are canonical.

\item[-] If $A\ \ourdef\ \Pi_{y:B} \ C(y)$ and $\Gamma \ \ourdef\ \Delta,x:E$
then $$\trasp (p,-) : A\ \rightarrow\  A[x'/x]\ [\Delta, x: E, x': E, p: x=_E x'  ]$$
for any $f: \Pi_{y:B} \ C(y)$ 
satisfies
$$\trasp(p,f) =  \lambda z.   \ \trasp^C(\,p,\,  f(\, \trasp^B(p^{-1}, z)\, )\, ) )$$
where $p^{-1}$ is the reverse path in \cite{hottbook}.

This is canonical by definition \ref{caniso}, since the transport operations of $B$ and $C(y)$ along $p$ and $p^{-1}$ are all canonical by inductive hypothesis.

\item[-] If $A\ \ourdef\ B + C$ and $\Gamma\ \ourdef\ \Delta, x:E$, then $$\trasp(p, z):   A \rightarrow A[x'/x]\ [\Delta, x: E, x': E, p: x=_E x'  ]$$ 
satisfies $$\trasp(p, z) =\mathsf{ind}_+ (z, z_1. \mathsf{inl}(\trasp (p,z_1))\, ,
z_2. \mathsf{inr}(\trasp (p,z_2)))$$ which is canonical since the transport
operations of $B$ and $C$ are canonical by inductive hypothesis.

\item[-] If $A\ \ourdef\ B/R$ and $\Gamma\ \ourdef\ \Delta, x:E$, then $$\trasp(p, z):   A \rightarrow A[x'/x]\ [\Delta, x: E, x': E, p: x=_E x'  ]$$ satisfies 
$$\trasp (p, z) =\mathsf{ind}_{Q} (z, w. [\trasp (p, w)]\, )$$ which is \can\ by definition \ref{caniso} since the transport operations of $B$ are canonical by inductive hypothesis.

\item[-] If $A\ \ourdef\ \mathsf{List}(B)$, then it follows in a similar manner that it has canonical transport operations. 
\end{itemize}
\end{proof}

\begin{corollary}
For any transport operation its inverse
is a canonical isomorphism as well.
\end{corollary}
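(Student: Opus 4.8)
The plan is to identify the inverse of a transport with another transport. Concretely, I would show that the inverse of $\trasp(p,-)$ along a path $p : x =_E x'$ is the transport $\trasp(p^{-1},-)$ along the reverse path $p^{-1} : x' =_E x$, where $p^{-1}$ is the path reversal of \cite{hottbook}. Since $p^{-1}$ is again a path in $E$, the map $\trasp(p^{-1},-)$ is itself a transport operation on the very same h-set $A$ of $\setem$, and hence is a canonical isomorphism by Lemma~\ref{caninv}. Once this identification is established, the corollary follows immediately.

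First I would reduce everything to the defining path induction. Both $\trasp^{n+1}(p,-)$ and its candidate inverse $\trasp^{n+1}(p^{-1},-)$ are defined by $\mathrm{Id}$-elimination on the path argument, so it suffices to eliminate $p$ to $\mathsf{refl}$, whereupon every transport reduces to an identity map and the two round-trip composites are literally the identity. By the functoriality of transport with respect to path reversal (as in Lemma~2.3.9 of \cite{hottbook}), this yields proof-terms of $\Pi_{u}\ \mathrm{Id}(\trasp(p^{-1},\trasp(p,u)),u)$ and $\Pi_{v}\ \mathrm{Id}(\trasp(p,\trasp(p^{-1},v)),v)$, which exhibit $\trasp(p^{-1},-)$ as a two-sided inverse of $\trasp(p,-)$ in the sense of Definition~\ref{iso}. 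Because $A$ is an h-set, these identity types are h-propositions by Lemma~\ref{id}, so there is no higher coherence to check.

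Next I would invoke Lemma~\ref{caninv}: since $\trasp(p^{-1},-)$ is a transport operation on an h-set of $\setem$, it is canonical. Finally, by Lemma~\ref{isoext} the inverse of an indexed isomorphism between h-sets is unique up to propositional equality, so the inverse carried along with $\trasp(p,-)$ in the canonical-isomorphism structure of Definition~\ref{caniso} agrees with $\trasp(p^{-1},-)$ and is therefore canonical. One could alternatively argue by structural induction mirroring the proof of Lemma~\ref{caninv}, checking at each type constructor that the recursively chosen inverse is again assembled from inverses of canonical component transports; the path-reversal route is cleaner and I would adopt it.

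The main obstacle is the bookkeeping in the generalized transport $\trasp^{n+1}$, whose codomain $C(x',\trasp^1(p,y_1),\dots,\trasp^n(p,y_n))$ carries the transported context variables. One must check that reversing $p$ matches these transported arguments back up with the original $y_i$ via $\trasp(p^{-1},\trasp(p,y_i)) = y_i$ componentwise, so that the source and target of the composite are the expected ones. Path induction on $p$ collapses all of this to the reflexivity case, where every transport is an identity and the matching is trivial; this is the only step I would write out in detail, the remainder being routine.
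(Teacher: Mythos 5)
Your proposal is correct and follows essentially the same route as the paper: the paper's proof simply observes that $\trasp^{i}(p^{-1},-)$ is the inverse of $\trasp^{i}(p,-)$ (citing Example~2.4.9 of the HoTT book) and hence is canonical as a transport operation by the preceding lemma. Your additional path-induction verification of the round-trip identities and the bookkeeping for $\trasp^{n+1}$ are just a more explicit rendering of the same argument.
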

\begin{proof}
Note that $\trasp^{i} (p^{-1}, -)$
is the inverse of $\trasp^{i} (p, -)$
as shown in Example 2.4.9 \cite{hottbook}.
\end{proof}

Canonical isomorphisms are unique, are closed under composition and they have canonical inverses:

\begin{proposition}\label{isocanprop}
The following properties of canonical isomorphisms hold:
\begin{itemize}
    
    \item[-]{\bf identities are canonical} : For any h-set $A: \suni\ [\Gamma]$ in $\setem$,  the map $\mathsf{id}_{A}: A\rightarrow A\ [\Gamma]$ is a \can\ isomorphism;
    
    \item[-] {\bf uniqueness of  canonical isomorphisms}: For any  h-sets $A_1, A_2 : \suni\ [\Gamma]$   in $\setem$, if $\mu_{A_1}^{A_2}: A_1\rightarrow A_2\ [\Gamma]$ and $\nu_{A_1}^{A_2}: A_1\rightarrow A_2\ [\Gamma]$ are \can\ isomorphisms, then  $\mu_{A_1}^{A_2}(z) =_{A_2} {\nu}_{A_1}^{A_2}(z)\ [\Gamma, z: A_1]$;
    
    \item[-]{\bf closure under composition}: For any  h-sets $A_1, A_2 : \suni\ [\Gamma]$   in $\setem$,, if $\mu_{A_1}^{A_2}: A_1\rightarrow A_2\ [\Gamma]$ and $\mu_{A_2}^{A_3}: A_2\rightarrow A_3\ [\Gamma]$ are \can\ isomorphisms, then $\mu_{A_2}^{A_3}\circ\mu_{A_1}^{A_2}: A_1\rightarrow A_3\ [\Gamma]$ is a \can\ isomorphism.
    
    \item[-] {\bf  closure under canonical inverse}:  For any  h-sets $A_1, A_2 : \suni\ [\Gamma]$   in $\setem$,  each canonical isomorphism $$\mu_{A_1}^{A_2}:A_1\rightarrow A_2 \  [\Gamma]$$
is an isomorphism in the sense of definition \ref{iso} with a canonical inverse.
\end{itemize}
\end{proposition}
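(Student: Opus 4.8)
The plan is to prove all four properties by a single simultaneous structural induction on the inductive definition of canonical isomorphisms in Definition~\ref{caniso}, so that in the inductive step for each type constructor we may invoke all four statements on the structurally smaller component types (the interdependence is real: verifying that the candidate inverse of the $\Pi$-clause is canonical already appeals to closure under composition). Throughout I would reduce equalities of functions to pointwise equalities by function extensionality (Lemma~\ref{isoext}), and I would repeatedly exploit two features of the setting: first, that every object in sight is an h-set, so by Lemma~\ref{id} every identity type occurring is an h-proposition and the transport operations supplied by Lemma~\ref{caninv}, together with their canonical inverses, behave coherently; and second, that each truncation $\vert\vert B\vert\vert$ is itself an h-proposition by Lemma~\ref{firstcon}.

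For \textbf{identities are canonical} I would induct on the h-set $A$ in $\setem$. On a ground type $\mathsf{id}_A$ is canonical by the first clause of Definition~\ref{caniso}, and on $\vert\vert B\vert\vert$ it is canonical by the second clause, being an isomorphism. For $\Sigma$, $\Pi$, $+$, $\mathsf{List}$ and $B/R$ I would verify that $\mathsf{id}_A$ satisfies the defining equation of the relevant clause when the component isomorphisms are taken to be the identities provided by the inductive hypothesis: on $\Sigma_{x:B}C(x)$ this amounts to $\mathsf{id}_A(z)=(\mathsf{id}_B(\fpr z),\mathsf{id}_C(\spr z))=(\fpr z,\spr z)$, which holds by the uniqueness principle for $\Sigma$, while on $\Pi$ the transport appearing in the clause is transport along reflexivity and hence the identity. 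Thus $\mathsf{id}_A$ falls under the appropriate clause and is canonical.

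For \textbf{uniqueness} the decisive observation is the truncation case: since the codomain $\vert\vert B_2\vert\vert$ is an h-proposition, any two maps into it are automatically equal, so although the second clause admits arbitrary isomorphisms as canonical this causes no ambiguity. On the remaining clauses two canonical isomorphisms are, by the defining equations, completely determined by their component canonical isomorphisms, which agree pointwise by the inductive hypothesis; the componentwise equations then force the two maps to agree. The only delicate point is the dependency in $\Sigma$ and $\Pi$, where comparing the second-component isomorphisms $\mu_{C_1}^{C_2(\mu_B(x))}$ and $\nu_{C_1}^{C_2(\nu_B(x))}$ requires transporting along a proof of $\mu_B(x)=_{B_2}\nu_B(x)$; since $B_2$ is an h-set this identity type is an h-proposition (Lemma~\ref{id}), so the transport is coherent and the comparison goes through.

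For \textbf{closure under composition} and \textbf{closure under canonical inverse} I would again run the induction. For composition the ground and truncation cases are immediate (identities and isomorphisms compose), and on the remaining constructors the composite of two canonical isomorphisms is checked to satisfy the defining equation of the relevant clause assembled from the componentwise composites, which are canonical by the inductive hypothesis. For inverses, Definition~\ref{caniso} already exhibits a candidate inverse in each clause, so it remains to show that this candidate is canonical and genuinely inverts: canonicity follows because the candidate is built from the component inverses (canonical by the inductive hypothesis) and transport operations (canonical by Lemma~\ref{caninv} and the following corollary) composed together, invoking closure under composition from the same induction; the round-trip equations of Definition~\ref{iso} follow from the componentwise round-trips together with the fact that in an h-set all identity proofs coincide. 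I expect the main obstacle to be the bookkeeping of transports in the dependent clauses $\Sigma$ and $\Pi$: there the inverses and composites are defined through transports along the paths $p_\mu$ witnessing $\mu_B(\mu_B^{-1}(x'))=_{B_2}x'$, and verifying the inverse and composition equations means tracking how these transports interact with the component isomorphisms. The saving grace, used everywhere, is that all the types are h-sets, so every identity type in play is an h-proposition and the coherence of the transports is automatic, preventing any higher-dimensional complications from arising.
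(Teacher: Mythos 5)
Your proposal is correct and follows essentially the same route as the paper: a single simultaneous structural induction over Definition~\ref{caniso}, reducing function equalities to pointwise ones via Lemma~\ref{isoext}, dispatching the truncation clauses by h-propositionality, handling the dependent clauses by transporting along the identifications supplied by the inductive hypothesis (coherently, since all identity types in h-sets are h-propositions), and establishing canonicity of the exhibited inverses from the canonicity of component inverses and of transports (Lemma~\ref{caninv}) together with closure under composition and uniqueness from the same induction. The paper merely carries out the transport bookkeeping in the $\Sigma$ and $\Pi$ clauses explicitly via commuting diagrams, which is exactly the step you flag as the main obstacle.
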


\begin{proof}
All the statements are proved simultaneously  by structural induction over the definition of \can\ isomorphisms. For each point we just show  some cases since the others follow analogously.

\begin{enumerate}
\item First point.
 
\noindent   
If $A\ \ourdef\ \vert\vert C\vert\vert$, then that $\mathsf{id}_{A}$ is a \can\ isomorphism trivially follows, since the identity map is an isomorphism. 

If $A\ \ourdef\ \Sigma_{x:B}\ C(x)$, then by induction hypothesis $\mathsf{id}_{B}$ and $\mathsf{id}_{C(x)}$ are \can\ isomorphisms, hence

$$\nu_{\Sigma_{x:B}C(x)}(z)\ =\ (\mathsf{id}_{B}(\fpr(z)), \mathsf{id}_{C(\fpr(z))}^{C(\mathsf{id}_{B}(\fpr(z))}(\spr(z))) \equiv\   (\fpr(z),\spr(z)).$$
But we know that $(\fpr(z),\spr(z)) = z$, 
hence  $\nu_{\Sigma_{x:B}C(x)}(z)\ = \mathsf{id}_{\Sigma_{x:B}C(x)}(z)\ $  which means that $\mathsf{id}_{\Sigma_{x:B}C(x)} $ is a canonical isomorphism since by hypothesis its transports are \can.

\noindent
If $A\ \ourdef\ B + C$ with canonical transport operations, then by induction hypothesis $\mathsf{id}_{B}$ and $\mathsf{id}_{C}$ are \can\ isomorphisms, therefore $$ \nu_{B+C}(z)\ \ourdef\ \mathsf{ind}_{+}( z, z_0. \mathsf{inl}(\mathsf{id}_B(z_0)), z_1.\mathsf{inr}(\mathsf{id}_C(z_1)))\equiv\ \mathsf{ind}_{+}( z, z_0. \mathsf{inl}(z_0), z_1.\mathsf{inr}(z_1)) $$ for any $z: B+C $, but $\mathsf{ind}_{+}( z, z_0. \mathsf{inl}(z_0), z_1.\mathsf{inr}(z_1)) = z$ and hence $\nu_{B+C}(z) = \mathsf{id}_{B+C}(z) $, which implies that the latter is a \can\ isomorphism.

 The other cases are similar. 
\vspace{1.0em}

\item Second point.

\noindent
For non-dependent ground types, the result is immediate since \can\ isomorphisms are the identities. 

\noindent
Suppose  $A_1\ \ourdef\ \vert\vert B_1\vert\vert$ and $A_2\ \ourdef\ \vert\vert B_2\vert\vert$. 
Then   $\mu_{\vert\vert B_1\vert\vert}^{\vert\vert B_2\vert\vert}(x)=_{\vert\vert B_2\vert\vert} \nu_{\vert\vert B_1\vert\vert}^{\vert\vert B_2\vert\vert}(x)$\ for any $x: \vert\vert B_1\vert\vert$\ , since $\vert\vert B_2\vert\vert$ is a h-proposition. 

\noindent
If $A_1\ \ourdef\ \Sigma_{x:B_1}\ C_1(x)$ and $A_2\ \ourdef\ \Sigma_{x':B_2}\ C_2(x')$ then both $\mu_{A_1}^{A_2}$  and $\nu_{A_1}^{A_2}$ are defined componentwise as in definition~\ref{caniso}. Let us assume that $\mu_{B_1}^{B_2}$ and $\mu_{C_1(x)}^{C_2(\mu_{B_1}^{B_2}(x))}$ are the components of the first and  $\nu_{B_1}^{B_2}$ and $\nu_{C_1(x)}^{C_2(\nu_{B_1}^{B_2}(x))}$ are those of the latter.

\noindent
Then, by inductive hypothesis $$\mu_{B_1}^{B_2}(x)=_{B_2}\nu_{B_1}^{B_2}(x)\ [\Gamma, x:B_1]$$ and

$$\trasp (p,\  \mu_{C_1(x)}^{C_2(\mu_{B_1}^{B_2}(x))}(y)) =_{C_2(\nu_{B_1}^{B_2}(x))} \nu_{C_1(x)}^{C_2(\nu_{B_1}^{B_2}(x))}(y)\ [\Gamma, x:B_1, y:C_1(x), p: \mu_{B_1}^{B_2}(x)=_{B_2}\nu_{B_1}^{B_2}(x)\ ]$$
therefore  $$\mu_{\Sigma_{x:B_1}\ C_1(x)}^{\Sigma_{x':B_2}\ C_2(x')}(z) =_{\Sigma_{x':B_2}C_2(x')} \nu_{\Sigma_{x:B_1}\ C_1(x)}^{\Sigma_{x':B_2}\ C_2(x')}(z)\ [\Gamma, z: \Sigma_{x:B_1}\ C_1(x)]$$  and, by lemma \ref{isoext}, we conclude $\mu_{\Sigma_{x:B_1}\ C_1(x)}^{\Sigma_{x':B_2}\ C_2(x')} = \nu_{\Sigma_{x:B_1}\ C_1(x)}^{\Sigma_{x':B_2}\ C_2(x')}$.

\noindent
If $A_1\ \ourdef\ \Pi_{x:B_1}\ C_1(x)$ and $A_2\ \ourdef\ \Pi_{x':B_2}\ C_2(x')$, let us consider any two canonical isomorphisms which we denote as $$\mu_{A_1}^{A_2}=\lambda f. \lambda z.   (\ \trasp (p_\mu\ ,-)\circ  
    \mu_{C_1(\mu_{B_1}^{B_2^{-1}}(x'))}^{C_2(  \mu_{B_1}^{B_2} (\mu_{B_1}^{B_2^{-1}}(x') )\,)} \ ) (f (\mu_{B_1}^{B_2^{-1}}(z))\ )$$ and
$$\nu_{A_1}^{A_2} =\lambda f. \lambda z.   (\ \trasp (p_\nu\ ,-)\circ  
    \nu_{C_1(\nu_{B_1}^{B_2^{-1}}(x'))}^{C_2(  \nu_{B_1}^{B_2} (\nu_{B_1}^{B_2^{-1}}(x') )\,)} \ ) (f (\nu_{B_1}^{B_2^{-1}}(z))\ )
 $$
where $p_\mu \ :\mu_{B_1}^{B_2}(\mu_{B_1}^{B_2^{-1}}(z) )=_{B_2}z$ and $p_\nu \ :\nu_{B_1}^{B_2}(\nu_{B_1}^{B_2^{-1}}(z) )=_{B_2}z$.
Now, since
for any $f:A_1$ and any $x: B_1$
by inductive hypothesis there exists a proof $q$ of type 
$$\mu^{B_2}_{B_1}(x)=\nu^{B_2}_{B_1}(x) $$
and the same holds for their
inverses, which are canonical  by inductive hypothesis.
Therefore there exists  a proof $q':
{\mu^{B_2}_{B_1}}^{-1}(z)={\nu_{B_1}^{B_2}}^{-1}(z)
$ for $z: B_2$ 
and by lemma \ref{apd-tr} we get  a proof of the equality


$$\trasp(q',-) ( f({\mu^{B_2}_{B_1}}^{-1}(z))\ )  = \trasp(q', f({\mu^{B_2}_{B_1}}^{-1}(z))\ ) )=f({\nu_{B_1}^{B_2}}^{-1}(z) ) $$
Moreover, we have also a proof
$$q'': \mu_{B_1}^{B_2}(\mu_{B_1}^{B_2^{-1}}(z)) =  \nu_{B_1}^{B_2}({\nu_{B_1}^{B_2}}^{-1}(z))$$
being each member equal to $z:B_2$.

Furthermore,  by uniqueness of canonical morphisms from
$C_1({\mu^{B_2}_{B_1}}^{-1}(z))$ to $C_2 ( \nu^{B_2}_{B_1} ({\nu^{B_2}_{B_1}}^{-1}(z))   )$ which follows
by inductive hypothesis we have a proof of the following equality
$$\nu_{C_1({\nu^{B_2}_{B_1}}^{-1}(z))}^{C_2( \nu^{B_2}_{B_1} ({\nu^{B_2}_{B_1}}^{-1}(z))   ) }
\circ \trasp (q', -)
=   \trasp (q'', -)\circ \mu_{C_1({\mu^{B_2}_{B_1}}^{-1}(z))}^{C_2( \mu^{B_2}_{B_1} ({\mu^{B_2}_{B_1}}^{-1}(z))   ) }  $$

\begin{center}
$$\xymatrix{
    C_1({\mu_{B_1}^{B_2}}^{-1}(z)) \ar[rr]^{\mu_{C_1(--)}^{C_2(\mu_{B_1}^{B_2}(--))}}       \ar[dd]_{{\trasp (q',-)}}                          &  &
     {C_2 (\mu_{B_1}^{B_2}({\mu_{B_1}^{B_2}}^{_1}(z)) ) }   \ar[dd]^{{\trasp (q'',-)}}                           \\
                                                                                                                    &  &                                                                               \\
{C_1({\nu_{B_1}^{B_2}}^{-1}(z))}\ar[rr]_{\nu_{C_1(--)}^{C_2(\nu_{B_1}^{B_2}(--))}} &  & C_2 (\nu_{B_1}^{B_2}({\nu_{B_1}^{B_2}}^{_1}(z)))}
$$
\end{center}
and hence

$$\trasp(p_\nu,-) \circ (\ \nu_{C_1({\nu^{B_2}_{B_1}}^{-1}(z))}^{C_2( \nu^{B_2}_{B_1} ({\nu^{B_2}_{B_1}}^{-1}(z))   ) }
\circ \trasp (q', -)\ )
=  \trasp (p_\nu, -)  \circ ( \trasp (q'', -)\circ \mu_{C_1({\mu^{B_2}_{B_1}}^{-1}(z))}^{C_2( \mu^{B_2}_{B_1} ({\mu^{B_2}_{B_1}}^{-1}(z))   ) } \ ) $$

Moreover, knowing that  transports commute because they are uniquely determined up to propositional equality we get 
$$\trasp (p_\nu, -)  \circ  \trasp (q'', -)= \trasp(p_\mu,-)$$

\begin{center}
  $$\xymatrix{
  C_2 (\mu_{B_1}^{B_2}({\mu_{B_1}^{B_2}}^{-1}(z))) \ar[rrd]^{\trasp(p_\mu,-)}      \ar[dd]_{\trasp(q'',-)}                      &  &        \\
                                                                                                   &  & C_2(z) \\
C_2 (\nu_{B_1}^{B_2}({\nu_{B_1}^{B_2}}^{-1}(z)))\ar[rru]_{\trasp(p_\nu,-)} &  &       
}$$
\end{center}
and we conclude
$$
\trasp(p_\mu,-) \circ \ \mu_{C_1({\mu^{B_2}_{B_1}}^{-1}(z))}^{C_2( \mu^{B_2}_{B_1} ({\mu^{B_2}_{B_1}}^{-1}(z))   ) }
=  \trasp (p_\nu, -) \circ  (\  \nu_{C_1({\nu^{B_2}_{B_1}}^{-1}(z))}^{C_2( \nu^{B_2}_{B_1} ({\nu^{B_2}_{B_1}}^{-1}(z))   ) } \circ \trasp (q', -)\ ) $$
which applied to $f (\mu_{B_1}^{B_2^{-1}}(z))  $ and recalling that $ \trasp (q', -) (f (\mu_{B_1}^{B_2^{-1}}(z)))= f({\nu_{B_1}^{B_2}}^{-1}(z) )$  immediately gives

$$\begin{array}{rl} \mu_{A_1}^{A_2} (f,z)= & (\ \trasp(p_\mu,-) \circ \ \mu_{C_1({\mu^{B_2}_{B_1}}^{-1}(z))}^{C_2( \mu^{B_2}_{B_1} ({\mu^{B_2}_{B_1}}^{-1}(z))   ) } \ )
(f (\mu_{B_1}^{B_2^{-1}}(z)) )\\[10pt]
 = & \trasp (p_\nu, -) \circ (\  \nu_{C_1({\nu^{B_2}_{B_1}}^{-1}(z))}^{C_2( \nu^{B_2}_{B_1} ({\nu^{B_2}_{B_1}}^{-1}(z))   ) } \circ
 \trasp (q', -) \ ) (f (\mu_{B_1}^{B_2^{-1}}(z)) )\\[10pt]
 =&  \trasp (p_\nu, -) \circ   \nu_{C_1({\nu^{B_2}_{B_1}}^{-1}(z))}^{C_2( \nu^{B_2}_{B_1} ({\nu^{B_2}_{B_1}}^{-1}(z))   ) }
 f({\nu_{B_1}^{B_2}}^{-1}(z) )\\[10pt]
 =& \nu_{A_1}^{A_2} (f,z)
 \end{array}$$

and hence
 $$\mu_{A_1}^{A_2}=\nu_{A_1}^{A_2}$$

\noindent
If $A_1\ \ourdef\ B_1 + C_1 $ and $A_2\ \ourdef\ B_2 + C_2$ then both $\mu_{A_1}^{A_2}$ and $\nu_{A_1}^{A_2}$ are defined as in definition \ref{caniso}: in particular, $\mu_{B_1}^{B_2}$ and $\mu_{C_1}^{C_2}$ are \can\ isomorphisms as well as $\nu_{B_1}^{B_2}$ and $\nu_{C_1}^{C_2}$. 

Then,  by inductive hypothesis $$ \mu_{B_1}^{B_2}(x) =_{B_2} \nu_{B_1}^{B_2}(x)\ [\Gamma, x:B_1]$$ and $$ \mu_{C_1}^{C_2}(y) =_{C_2} \nu_{C_1}^{C_2}(y)\ [\Gamma, y:C_1]$$ therefore it trivially follows that  $$ \mu_{B_1 + B_2}^{C_1 + C_2}(z) =_{C_1 + C_2} \nu_{B_1 + B_2}^{C_1 + C_2}(z)\ [\Gamma, z: B_1 + B_2]$$

\noindent
and by lemma \ref{isoext} $\mu_{B_1 + B_2}^{C_1 + C_2} = \nu_{B_1 + B_2}^{C_1 + C_2}$. 

\noindent
If $A_1\ \ourdef\ B_1/R_1 $ and $A_2\ \ourdef\ B_2/R_2$ then $\mu_{A_1}^{A_2}$ and $\nu_{A_1}^{A_2}$ are defined as in definition \ref{caniso}: hence we can assume that $\mu_{B_1}^{B_2}$ and $\nu_{B_1}^{B_2}$ are \can\ isomorphisms and that the following propositions  $R_1 (x,y) \leftrightarrow R_2 (\mu_{B_1}^{B_2}(x),\mu_{B_1}^{B_2}(y)) $ and $R_1 (x,y) \leftrightarrow R_2 (\nu_{B_1}^{B_2}(x), \nu_{B_1}^{B_2}(y))$ hold. 

\noindent
Then,  by inductive hypothesis $$\mu_{B_1}^{B_2}(x) =_{B_2}  \nu_{B_1}^{B_2}(x)\ [\Gamma, x:B_1]$$ and  hence $$ R_2 (\mu_{B_1}^{B_2}(x), \mu_{B_1}^{B_2}(y)) \leftrightarrow R_2 (\nu_{B_1}^{B_2}(x), \nu_{B_1}^{B_2}(y))\ [\Gamma, x:B_1, y:B_1].$$ Therefore it trivially follows that  $$ \mu_{B_1/R_1}^{B_2/R_2}(z) =_{B_2/R_2} \nu_{B_1/R_1}^{B_2/R_2}(z)\ [\Gamma, z: B_1/R_1]$$ and by lemma \ref{isoext} $\mu_{B_1/R_1}^{B_2/R_2} = \nu_{B_1/R_1}^{B_2/R_2}$.
\vspace{1.0em}

\item Third point.

For non-dependent ground types, the composition is the identity and hence is canonical by definition. 

For truncated types, since isomorphisms are closed under composition and any isomorphism between truncated types is canonical by definition, then it immediately follows that the composition of canonical isomorphisms between truncated types is canonical too.

\noindent
If $A_1\ \ourdef\  \Sigma_{x:{B_1}}{C_1 (x)}$ and $A_2\ \ourdef\ \Sigma_{x':B_2}C_2 (x') $ and $A_3\ \ourdef\ \Sigma_{x'':B_3}C_3 (x'')$, then by definition of canonical isomorphisms
$$\mu_{A_1}^{A_2}\ =\ \lambda z.(\mu_{B_1}^{B_2}(\fpr(z)), \mu_{C_1(\fpr(z)) }^{C_2(\mu_{B_1}^{B_2}(\fpr(z))}( \spr(z)))\ )$$ 
and $$\mu_{A_2}^{A_3}\ =\ \lambda z.(\mu_{B_2}^{B_3}(\fpr(z)), \mu_{C_2 (\fpr(z))}^{C_3 (\mu_{B_2}^{B_3}(\fpr(z)) ) }(\spr(z))\ )$$
Now the composition of 
$\mu_{A_2}^{A_3}\circ \mu_{A_1}^{A_2}$ applied to $z: \Sigma_{x:B_1} C_1 (x) $
amounts to 

$$\begin{array}{rcl} 
\mu_{A_2}^{A_3}\circ \mu_{A_1}^{A_2} (z)\ & = & \mu_{\Sigma_{x':B_2}C_2(x')}^{\Sigma_{x'':B_3}C_3 (x'')} \circ \mu_{\Sigma_{x:B_1}C_1(x)}^{\Sigma_{x':B_2}C_2(x')}\, (z)\\[5pt]
 & = & (\mu_{B_2}^{B_3}( \mu_{B_1}^{B_2}(\fpr(z))\, )\, ,\  \mu_{C_2(\mu_{B_1}^{B_2}(\fpr(z)) }^{C_3(\mu_{B_2}^{B_3}(\mu_{B_1}^{B_2}(\fpr(z))     )} (\, \mu_{C_1(\fpr(z))}^{C_2( \mu_{B_1}^{B_2}(\fpr(z))) }(\spr(z)) \,  )\ )
 \end{array}$$
 which is a canonical isomorphism by definition \ref{caniso}
 since $\mu_{B_2}^{B_3}\circ \mu_{B_1}^{B_2} $ and
 $\mu_{C_2}^{C_3}\circ \mu_{C_1}^{C_2}$ are \can\ isomorphisms by inductive hypothesis.

\noindent
If $A_1\ \ourdef\  \Pi_{x:{B_1}}{C_1}(x)$ and $A_2\ \ourdef\ \Pi_{x':B_2}C_2(x') $ and $A_3\ \ourdef\ \Pi_{x'':B_3}C_3(x'')$, then, by definition of canonical isomorphisms
$$\mu_{A_1}^{A_2}\ =\ \lambda f. \lambda x': B_2. (\ \trasp (p_{\mu_{A_1}^{A_2}},-) \circ  
    \mu_{C_1(\mu_{B_1}^{B_2^{-1}}(x'))}^{C_2(  \mu_{B_1}^{B_2} (\mu_{B_1}^{B_2^{-1}}(x') )\,)} \ ) (f (\mu_{B_1}^{B_2^{-1}}(x'))\ )$$ 
    for any $p_{\mu_{A_1}^{A_2}} \ :\mu_{B_1}^{B_2}(\mu_{B_1}^{B_2^{-1}}(x') )=_{B_2}x'$ and
$$\mu_{A_2}^{A_3}\ =\ \lambda f. \lambda x'': B_3. (\trasp (p_{\mu_{A_2}^{A_3}}, -) \circ 
    \mu_{C_2({\mu_{B_2}^{B_3}}^{-1}(x''))}^{C_3(  \mu_{B_2}^{B_3} (\mu_{B_2}^{B_3^{-1}}(x'') )\,)} )(f (({\mu_{B_2}^{B_3}}^{-1}(x'')))\ )     $$
 for any $p_{\mu_{A_2}^{A_3}} \ : \mu_{B_2}^{B_3}({\mu_{B_2}^{B_3}}^{-1}(x''))=x''$.
 
Hence,   for any $x'': B_3$ and $f:  \Pi_{x'':B_3}C_3(x'')$ their composition  becomes
$$\begin{array}{l} 
\mu_{A_2}^{A_3}\circ \mu_{A_1}^{A_2} (f,x'') =  \ \mu_{\Pi_{x':B_2}C_2(x')}^{\Pi_{x'':B_3}C_3 (x'')} \circ \mu_{\Pi_{x:B_1}C_1(x)}^{\Pi_{x':B_2}C_2(x')}\, (f,x'')\\[10pt]
=  
(\trasp (p_{\mu_{A_2}^{A_3}}, -) \circ 
    \mu_{C_2(--)}^{C_3(  \mu_{B_2}^{B_3} (--) \,)} ) \circ 
( \ \trasp (p'_{\mu_{A_1}^{A_2}},-) \circ  
    \mu_{C_1(--)}^{C_2(  \mu_{B_1}^{B_2} (--) )} (   f ( { \mu_{B_1}^{B_2}}^{-1} ({\mu_{B_2}^{B_3}}^{-1}(x'') ) )
 )\\[10pt]
 =
 (\trasp(p_{\mu_{A_2}^{A_3}}, -) \circ   \trasp (p''_{\mu_{A_1}^{A_2}},-) ) \circ (\    \mu_{C_2 (--)}^{C_3(  \mu_{B_2}^{B_3} (--) \,)} )  \circ
    \mu_{C_1(--)}^{C_2(  \mu_{B_1}^{B_2} (--) )} (   f ( { \mu_{B_1}^{B_2}}^{-1} ({\mu_{B_2}^{B_3}}^{-1}(x'') ) ))
   \end{array}$$
   where $p'_{{\mu_{A_1}^{A_2}} }\ \ourdef\  p_{{\mu_{A_1}^{A_2}}} [  {\mu_{B_2}^{B_3}}^{-1}(x'')/x']$ 
and $p''_{\mu_{A_1}^{A_2}}\ \ourdef\  p_{{\mu_{A_1}^{A_2}}} [  {  \mu_{B_1}^{B_2}}^{-1} \circ{\mu_{B_2}^{B_3}}^{-1}  (x'')/x']$ .
In particular, the last equality
 follows  by uniqueness of canonical isomorphisms
from $C_2(\mu_{B_1}^{B_2} ({\mu_{B_1}^{B_2}}^{-1}({\mu_{B_2}^{B_3}}^{-1}(z)))) $ to $ C_3(\mu_{B_2}^{B_3}({\mu_{B_2}^{B_3}}^{-1}(z)))   $
from this other equality
\begin{center}
    $$   
    \xymatrix{
C_2(\mu_{B_1}^{B_2} ({\mu_{B_1}^{B_2}}^{-1}({\mu_{B_2}^{B_3}}^{-1}(x'')))) \ar[dd]_{\mu_{C_2(--)}^{C_3(\mu_{B_2}^{B_3}(--))}} \ar [rrr]^{\trasp(p'_{\mu_{A_1}^{A_2}},-)} &  &  &  C_2({\mu_{B_2}^{B_3}}^{-1}(x''))  \ar[dd]^{\mu_{C_2(--)}^{C_3(\mu_{B_2}^{B_3}(--))}}  \\
                                                                                                                                                                &  &  &                                                                                                        \\
C_3(\mu_{B_2}^{B_3}(\mu_{B_1}^{B_2} ({\mu_{B_1}^{B_2}}^{-1}({\mu_{B_2}^{B_3}}^{-1}(x'')) ))) \ar[rrr]_{\qquad \trasp(p''_{\mu_{A_1}^{A_2}},-)}                                        &  &  &         C_3(\mu_{B_2}^{B_3}({\mu_{B_2}^{B_3}}^{-1}(x'')) )               \\
                                                                                                                                                                  }$$       
\end{center}
Hence, $\mu_{A_2}^{A_3}\circ \mu_{A_1}^{A_2}$ is  a canonical isomorphism because consists of compositions of canonical isomorphisms by inductive hypothesis beside the fact that transport operations compose.

 
%

\noindent
If $A_1\ \ourdef\ B_1 + C_1$ and $A_2\ \ourdef\ B_2 + C_2$ and $A_3\ \ourdef\ B_3 + C_3$, then by definition of canonical isomorphisms $$\mu_{A_1}^{A_2}\ =\ \lambda z. \mathsf{ind}_{+} (z, z_0.\mathsf{inl}(\mu_{B_1}^{B_2}(z_0)), z_1.\mathsf{inr}(\mu_{C_1}^{C_2}(z_1)))$$ and $$ \mu_{A_2}^{A_3}\ =\ \lambda z. \mathsf{ind}_{+} (z, z_0.\mathsf{inl}(\mu_{B_2}^{B_3}(z_0)), z_1.\mathsf{inr}(\mu_{C_2}^{C_3}(z_1)))$$ 
Let us consider the composition $\mu_{A_2}^{A_3}\circ \mu_{A_1}^{A_2}$ applied to $z: B_1 + C_1$, for which we get $\mu_{A_2}^{A_3}(\mu_{A_1}^{A_2}(z))$, then $$ \mu_{B_1 + C_1}^{B_3 + C_3}(z)\ =\ \mathsf{ind}_{+}(z, z_0.\mathsf{inl}(\mu_{B_2}^{B_3}(\mu_{B_1}^{B_2}(z_0)) ), z_1.\mathsf{inr}(\mu_{C_2}^{C_3}(\mu_{C_1}^{C_2}(z_1)) )$$ which amounts to $\mu_{A_2}^{A_3}(\mu_{A_1}^{A_2}(z))$ and is a \can\ isomorphism by definition \ref{caniso}.

\noindent
If $A_1\ \ourdef\ B/R_1$ and $A_2\ \ourdef\ B_2/R_2$ and $A_3\ \ourdef\ B_3/R_3$, then by inductive hypothesis $$\mu_{A_1}^{A_2}\ =\ \lambda z. \mathsf{ind}_Q (z, x. \mu_{B_1}^{B_2}(x))$$ and  $$\mu_{A_2}^{A_3}\ =\ \lambda z. \mathsf{ind}_Q (z, x. \mu_{B_2}^{B_3}(x))$$ are canonical isomorphisms. Then,  let us consider the composition $\mu_{A_2}^{A_3}\circ \mu_{A_1}^{A_2}$ applied to $z: B_1/R_1$, so that we get $\mu_{A_2}^{A_3}(\mu_{A_1}^{A_2}(z))$, then $$ \mu_{B_1/R_1}^{B_3/R_3}(z)\ \ourdef\ \mathsf{ind}_{Q} (z, x. \mu_{B_2}^{B_3}(\mu_{B_1}^{B_2}(x)))$$ which amounts to $\mu_{A_2}^{A_3}(\mu_{A_1}^{A_2}(z))$ and is a \can\ isomorphism by definition \ref{caniso}.

\item Fourth point.

\noindent
For non-dependent ground types
the inverse is the identity which is canonical by definition.

Canonical isomorphisms between truncated types have canonical inverse
 by definition \ref{caniso}.

 If $A_1\ \ourdef\ \Sigma_{x:B_1}\ C_1(x)\ [\Gamma]$ and $A_2\ \ourdef\ \Sigma_{x':B_2}\ C_2(x')\ [\Gamma]$ and  $\mu_{B_1}^{B_2}: B_1 \rightarrow  B_2\ [\Gamma]$ and $\mu_{C_1(x)}^{C_2(\mu_{B_1}^{B_2}(x))} :  C_1(x)\ \rightarrow  \  C_2(\mu_{B_1}^{B_2}(x)) \ [\Gamma, x:B_1] $ 
 are \can\ isomorphisms, then the inverse of $\mu_{A_1}^{A_2}$ given as in definition \ref{caniso},
 $${\mu_{A_1}^{A_2}}^{-1}\ \ourdef\  \lambda z. (\ {\mu_{B_1}^{B_2}}^{-1}(\fpr (z))\ , \ (
    { \mu_{C_1({\mu_{B_1}^{B_2}}^{-1}(\fpr(z)))}^{C_2(\mu_{B_1}^{B_2}({{\mu_{B_1}^{B_2}}^{-1} (\fpr(z)))}  }})^{-1} \circ \trasp({p_{\mu}}^{-1}, -) ( \spr(z)) \, ) $$

 is canonical by construction: it is composed of inverses of canonical isomorphisms, which are canonical by inductive hypothesis, and  transports, which are canonical by lemma~\ref{caninv}.
It amounts to be an inverse since  
 the following equality holds
 by uniqueness of canonical isomorphisms 
 
  \begin{center}
    $$   
    \xymatrix{
    C_2(\fpr(z)) \ar[rrr]^{ \trasp({p_\mu}^{-1} , -)  }& && C_2({\mu_{B_1}^{B_2}} ({\mu_{B_1}^{B_2}}^{-1} (\fpr(z)))) \ar[dd]^{(\mu_{C_1(--)}^{C_2(--)})^{-1}}\\
    &&\\
 C_2(   \mu_{B_1}^{B_2}( {\mu_{B_1}^{B_2} }^{-1}   (\fpr(z) )))\ar[uu]^{ \trasp( p_\mu, -) }&&& C_1( {\mu_{B_1}^{B_2}}^{-1} (\fpr(z) ))\ar[lll]^{  { \mu_{C_1(--)}^{C_2( --)}} }}
 $$      
\end{center}

If $A_1\ \ourdef\ \Pi_{x:B_1}\ C_1(x)\ [\Gamma]$ and $A_2\ \ourdef\ \Pi_{x':B_2}\ C_2(x')\ [\Gamma]$
    and  $\mu_{B_1}^{B_2}: B_1 \rightarrow  B_2\ [\Gamma]$
    and  $\mu_{C_1(x)}^{C_2(\mu_{B_1}^{B_2}(x))} :  C_1(x)\ \rightarrow  \ C_2(\mu_{B_1}^{B_2}(x)) \ [\Gamma, x:B_1] $  are \can\ isomorphisms, 
    then the inverse of $ \mu_{A_1}^{A_2}$ given as in definition \ref{caniso}
    $$(\mu_{A_1}^{A_2})^{-1} =\ \lambda f'. \lambda x: B_1.  (\ ( \mu_{C_1 \ ( x)}^{C_2(  {\mu_{B_1}^{B_2}}(x))})^{-1} 
    (f' ({\mu_{B_1}^{B_2}}(x))\ )) $$

%
    
     is a  canonical  since 
    we can show that:     for   $q_{\mu} $ proof of $(\mu_{B_1}^{B_2})^{-1}(\mu_{B_1}^{B_2}(x))=x$ and  for any  $f': \Pi_{x':B_2}\ C_2(x')$ and $x: B_1$ 
     $$(\mu_{A_1}^{A_2})^{-1} (f')( x)= (\  \trasp(q_{\mu}\ , -) \circ  (\   (\ \mu_{C_1 \ ( \mu_{B_1}^{B_2^{-1}}(-))}^{C_2(  ({\mu_{B_1}^{B_2}}) \circ  (\ { \mu_{B_1}^{B_2}}^{-1} ) (-) )}) ^{-1} \ ) \circ  \trasp({q_\mu}^{-1}, -) \ )
   (f' ({\mu_{B_1}^{B_2}}(x))\ ) $$
   where the right member is the application of a composition of  isomorphisms which are canoni\-cal  by inductive hypothesis,
       because by uniqueness of canonical isomorphisms 
        $$   \trasp(q_{\mu}\ , -) \circ (\  (\mu_{C_1 \ ( \mu_{B_1}^{B_2^{-1}}(-))}^{C_2(  ({\mu_{B_1}^{B_2}}) \circ  (\ { \mu_{B_1}^{B_2}}^{-1} ) (-))}) ^{-1} \circ  \trasp({q_\mu}^{-1}, -)  \ )
           = ( \mu_{C_1 \ ( x)}^{C_2(  {\mu_{B_1}^{B_2}}(x))})^{-1}$$
      and diagrammatically
       $$
           \xymatrix{
C_2(\mu_{B_1}^{B_2}(x))\ar[dd]_{\trasp({q_{\mu}}^{-1}\ , -)}   \ar[rrr]^{ (\mu_{C_1 \ ( x)}^{C_2(  {\mu_{B_1}^{B_2}}(x))})^{-1}}
&  & &C_1(x) \\
&&&\\
              C_2(\mu_{B_1}^{B_2}({\mu_{B_1}^{B_2}}^{-1}(\mu_{B_1}^{B_2}(x))))      \ar[rrr]_{(\mu_{C_1 \ ( \mu_{B_1}^{B_2^{-1}}(-))}^{C_2(  ({\mu_{B_1}^{B_2}} \circ   { \mu_{B_1}^{B_2}}^{-1} ) (-))})^{-1}}                                                                                             
  &&& C_1({\mu_{B_1}^{B_2}}^{-1}(\mu_{B_1}^{B_2}(x)))\ar[uu]_{\trasp(q_{\mu}\ , -)}                                                                                       
}$$

    \noindent
    The other  canonical isomorphisms obtained by different clauses can be easily shown to be equipped with canonical inverses by applying the inductive hypothesis to the canonical isomorphisms of lower type complexity.
\end{enumerate}
\end{proof}

\noindent
In \cite{Pal17} Palmgren discussed the issue of equality on objects in categories as formalized in type theory and he defined {\it E-categories } and {\it H-categories}. In this approach a fundamental role is played by the notion of setoid and proof-irrelevant dependent setoid as defined in \cite{m09}. 

\begin{definition}
An $E$-category consists of the following data: a type $C$ of objects, a dependent setoid of morphisms $\mathsf{Hom}(a,b)$ for any $a,b:C$ and a composition operation $\circ: \mathsf{Hom}(b,c)\times \mathsf{Hom}(a,b)\rightarrow \mathsf{Hom}(a,b)$, that is an extensional function in the sense that it preserves the relevant equivalence relations and that satisfy the usual associativity and identity conditions.
\end{definition}

\noindent
We can impose equality on objects in a $E$-category in a way compatible with composition. This leads to the following definition:

\begin{definition}\label{pal-h}
An $H$-category is an $E$-category where the type of objects $C$ is equipped with an equivalence relation $\sim_C$ and there exists a family of isomorphisms $\tau_{a,b,p}\in \mathsf{Hom}(a,b)$ for each $p:a\sim_C b$ such that 

\begin{itemize}
    \item[H1]: $\tau_{a,a,p} = 1_a$ for any $p: a\sim_C a$; 
    
    \item[H2]: $\tau_{a,b,p} = \tau_{a,b,q}$ for any $p,q: a\sim_C b$; 
    
    \item[H3]: $\tau_{b,c,q}\circ \tau_{a,b,p} = \tau_{a,c,r}$ for any $p: a\sim_C b$, $q: b\sim_C c$ and $r: a \sim_C c$.
     
\end{itemize}
\end{definition}

\begin{definition}\label{hcat}
Let $\setis$ be the category of h-sets in $\setem$
up to canonical isomorphisms and functions as morphisms defined
as follows:
the objects of $\setis$ are equivalent classes of  h-sets $A: \setem$ equated under canonical isomorphisms,  i.e.
an object of $\setis$ is an equivalence class $[A]$ of h-sets $A$ in $\setem$  where two objects $A$ and $B$ of $\setis$ are declared equal, by writing $[A]=_{\setis}[B]$, if there exists  a canonical isomorphism
$\tau_A^B: A\rightarrow B$. (Note that by Univalence, the equality  $[A]=_{\setis}[B]$ implies that 
$A=_{\suni}B$ holds in \hott\ as well.)
 
\noindent
Morphisms of $\setis$ from an object $[A]$ to an object $[B]$, indicated with
$\setis ([A],[B])$, 
are determined by  functions $f:A'\rightarrow B'$ between h-sets $A'$ and $B'$ such that $[A']=_{\setis}[A]$ and $[B']=_{\setis}[B]$ and 
 given two functions $f:A'\rightarrow B'$ and $g:A''\rightarrow B''$ 
with $[A']=_{\setis}[A'']$ and $[B']=_{\setis}[B'']$,  we define $f=_{\setis}g$  when 
$\mu_{B'}^{B''} \circ f=_{ A'\rightarrow B''} g\circ \mu_{A'}^{A''} $ holds for
canonical isomorphisms $\mu_{A'}^{A''}\ :\  A' \rightarrow \ A''$ and $\mu_{B'}^{B''}\ :\ B'\ \rightarrow \ B''$.  We denote such  morphisms
with $[f]:[A]\ \rightarrow\  [B]$ and when  there is no loss of generality  we implicitly mean that $f:A\rightarrow B$.
(Note that the morphism equality $[f]=[g]$  for arrows $f,g:A\rightarrow B$
 implies the {\em propositional equality} $f=_{A\rightarrow B}g$.)

\noindent
Composition of morphisms of $[f]:[A]\rightarrow [B]$ and $[g]:[B]\rightarrow [C]$ 
is defined as $[g\circ f] $ for representatives $f: A'\rightarrow B'$  and
$g: B'\rightarrow C'$.

\noindent
The identity morphism from $[A]$ to $[A]$ is the equivalence class $[\mathsf{id}_A]: [A]\rightarrow [A]$
of the identity morphism in \hott.
\end{definition} 

\begin{remark}
The category $\setis$ is a small H-category in the sense of definition \ref{pal-h} by taking  as objects of $\it C$ the setoid whose support is
$\set_{mf}$
and whose equality $A'=_{\it C} B'$ is defined as the truncation of the assumed inductive type
$\vert\vert \mathsf{Ciso}(A',B')\vert\vert$ and the hom-set between two objects
$\mathsf{Hom}(A',B')$ is the setoid having as support the set of arrows  
$A'\ \rightarrow\ B'$, and whose equality for $f,g :A'\ \rightarrow\  B'$ 
 is the propositional equality $f = g$.
Moreover,  for any $p:\vert\vert \mathsf{Ciso}(A',B')\vert\vert$ we define $\tau_{A',B',p}\ \ourdef\ \mathsf{ind}_{\vert \vert \ \vert\vert} (p, z.z)$, which is well defined since any canonical isomorphism between two h-sets is unique up to propositional equality and satisfy the required properties of an H-category as shown in proposition~\ref{isocanprop}.
\end{remark}

\section{ The compatibility of \emtt\ with \hott}
In this section, we show that also the extensional level \emtt\ of \mf\ is compatible with \hott. We are going to define a direct interpretation $\fintd: \emtt \longrightarrow\ \setis$, that is based on a {\it multi-functional} partial interpretation from \emtt\ raw-syntax to \hott\ raw-syntax. As in the case of definition \ref{mtt-int}, we assume to have defined two auxiliary partial maps $\prp$ and $\prs$, both from \hott\ raw-syntax to \hott\ raw-syntax, where the first is meant to associate to a type symbol of \hott\ a (chosen)  proof that it is a h-proposition, while the second associates to a type symbol of \hott\ a (chosen)  proof that it is a h-set. 

We stress the fact that the interpretation crucially relies upon \can\ isomorphisms as defined in definition \ref{caniso}. Indeed, it is only by means of \can\ isomorphisms that we can interpret correctly the definitional equalities and the conversions of \emtt. This means that when we are defining the interpretation for a raw type or a raw term depending on some other raw terms, we assume that the type of this term has been corrected by means of \can\ isomorphisms.

In this sense, the interpretation bears some resemblance to the interpretation of \emtt\ in \mtt\ given in \cite{m09}, but it has a more direct flavor, since we can avoid any setoid model construction thanks to the availability of set quotients as higher inductive types within \hott. 

Further, another important difference with the interpretation presented in \cite{m09} is due to the assumption of the Univalence Axiom. Indeed, the axiom plays a fundamental role in showing the compatibility of \emtt\ with \hott\, since it allows to convert the canonical isomorphism interpreting two definitionally equal \emtt-types  into {\it propositional} equal \hott-types.  The lack of a similar principle in \mtt\ prevents the interpretation in \cite{m09} from achieving a full compatibility result of \emtt\ with \mtt.

We will indicate the interpretation multi-function with $(-)^\blacktriangledown$ and the case when \can\ isomorphisms are required with $(-)^{\widetilde{\blacktriangledown}}$. The notation $(-)^{\widetilde{\blacktriangledown}}$ is similar to that used in \cite{m09}. Given an expression $a$ of \emtt\ raw-syntax, we write $a^{\widetilde{\blacktriangledown}}$ instead of $\widetilde{a}^{\blacktriangledown}$. Moreover, we introduce the following definitions: 

\begin{definition}
Given $A\ type\ [\Gamma]$ and $B\ type\ [\Gamma]$, the judgement $A =_{ext} B$ means that there exists a \can\ isomorphism $\mu_{A}^{B}$ relating $A$ and $B$. 
\end{definition}

\begin{definition}
If $C\ type\ [\Gamma]$ and $D\ type\ [\Delta]$, the judgement $C\ [\Gamma] =_{ext} D\   [\Delta]$ means the following: given $\Gamma\ \ourdef\  x_{1} : A_1, \ldots, x_n : A_n$ and $\Delta\ \ourdef\ y_1 : B_1, \ldots, y_n : B_n$, then we can derive $A_1 =_{ext} B_1, \ldots, A_n =_{ext} B_n[\mu_{A_1}^{B_1}(x_1)/ y_1, \ldots, \mu_{A_{n-1}}^{B_{n-1}}(x_{n-1})/ y_{n-1}] $ and also $C=_{ext} \widetilde{D}\ [\Gamma]$, where \ $\widetilde{D}\ \ourdef\ D [\mu_{A_1}^{B_1}(x_1)/ y_1, \ldots, \mu_{A_{n}}^{B_{n}}(x_{n})/ y_{n}]$ for some \can\  isomorphisms $\mu_{A_i}^{B^i}$ for $i=1,\dots, n$ and $\mu_{C}^{D}$.
\end{definition}

\begin{definition}
Given $c: C\ [\Gamma]$ and $D\ [\Delta]$ such that $C\  [\Gamma] =_{ext} D\ [\Delta]$, where $\Gamma\ \ourdef\ x_1 :A_1, \ldots, x_n :A_n$ and $\Delta\ \ourdef\ y_1:B_1, \ldots, y_n:B_n$, the judgement $c:_{ext} D\ [\Delta]$ means that we can derive $\widetilde{c}: \widetilde{D}\ [\Gamma]$, where $\widetilde{c}\ \ourdef\ \mu_{C}^{D}(c ( \mu_{A_1}^{B_1}(x_1), \ldots, \mu_{A_n}^{B_n}(x_n))) $ for some \can\  isomorphisms $\mu_{A_i}^{B^i}$ for $i=1,\dots, n$ and $\mu_{C}^{D}$.
\end{definition}

\begin{definition}
The judgement $a=_{ext} b :_{ext} A\ [\Gamma]$ means that we can derive $p: \widetilde{a} =_{\widetilde{A}} \widetilde{b}$.
\end{definition}

The definitions given above specify the meaning of the notation $\widetilde{a}$ for any raw-expression $a$ of \emtt\ and thus the notation $(-)^{\widetilde{\blacktriangledown}}$, which we will adopt in the next definition.

\begin{definition}[interpretation of \emtt\ raw-syntax]

We define a partial multifunctional interpretation of raw terms and types of \emtt\ into those of \hott\ 

$$(-)^\blacktriangledown:  \mbox{Raw-syntax }(\emtt) \ \longrightarrow \ \mbox{Raw-syntax }(\hott) $$
assuming to have defined two auxiliary partial functions

$$\prp(-):  \mbox{Raw-syntax }(\hott) \ \longrightarrow \ \mbox{Raw-syntax }(\hott) $$

and 

 $$\prs(-):  \mbox{Rawsyntax }(\hott) \ \longrightarrow \ \mbox{Rawsyntax }(\hott) $$

The definition of $(-)^\blacktriangledown$ for contexts of \emtt\ is the following: $([\ ])^\blacktriangledown$ is defined as $\textbf{1}$ and $(\Gamma, x\in A)^\blacktriangledown$ is defined as $\Gamma^\blacktriangledown, x: A^{\blacktriangledown}$. Furthermore, $(x\in A\ [\Gamma])^\blacktriangledown$ is defined as $x: A^\blacktriangledown\ [\Gamma^\blacktriangledown]$, provided that $x: A^\blacktriangledown$ is in $\Gamma^\blacktriangledown$. 

The interpretation of \emtt-judgements is defined as follows:

\begin{center}
\begin{tabular}{|lcl|}
\hline
&&\\[0.5pt]
    $(A\ set\ [\Gamma])^{\blacktriangledown} $  &is  defined as &  $A^\blacktriangledown :\funi \ [\Gamma^{\blacktriangledown}]$  such that $\prs(A^\blacktriangledown): \isset(A^\blacktriangledown)$ 
    
    is derivable \\[5pt]
    
     $(A\ col\ [\Gamma])^{\blacktriangledown} $  &is  defined as &  $A^\blacktriangledown :\suni \ [\Gamma^{\blacktriangledown}]$  such that $\prs(A^\blacktriangledown): \isset(A^\blacktriangledown)$ is derivable \\[5pt]
    
    $(P\ \textit{prop}_{s}\ [\Gamma])^{\blacktriangledown
    } $ & is defined as & $ \vert\vert P^\blacktriangledown\vert\vert : \funi \ [\Gamma^{\blacktriangledown}]$ such that $\prp(\vert\vert P^\blacktriangledown\vert\vert): \isprop(\vert\vert P^\blacktriangledown\vert\vert)$ is derivable \\[5pt]
    
    $(P\ \textit{prop}\ [\Gamma])^{\blacktriangledown} $ & is defined as & $ \vert\vert P^\blacktriangledown\vert\vert: \suni \ [\Gamma^{\blacktriangledown}]$ such that $\prp(\vert\vert P^\blacktriangledown\vert\vert): \isprop(\vert\vert P^\blacktriangledown\vert\vert)$ is derivable  \\[5pt]
   
     $(A=B \ set \ [\Gamma])^{\blacktriangledown}$ & is defined as & $  (A^{\blacktriangledown}, \prs(A^\blacktriangledown))\ =_{ext} (B^{\blacktriangledown}, \prs(B^\blacktriangledown)) : \set_{\funi} \ [\Gamma^\blacktriangledown]$  \\[5pt]
     
     $(A=B \ col \ [\Gamma])^{\blacktriangledown}$ & is defined as & $  (A^{\blacktriangledown}, \prs(A^\blacktriangledown))\ =_{ext} (B^{\blacktriangledown}, \prs(B^\blacktriangledown)) : \set_{\suni} \ [\Gamma^\blacktriangledown] $   \\[5pt]
     
     $(P=Q\ \textit{prop}_{s}\ [\Gamma])^{\blacktriangledown} $ & is defined as & $ (\vert\vert P^\blacktriangledown\vert\vert, \prp (\vert\vert P^\blacktriangledown\vert\vert)) =_{ext} (\vert\vert Q^\blacktriangledown\vert\vert, \prp (\vert\vert Q^\blacktriangledown\vert\vert)):  \prop_{\funi} \ [\Gamma^\blacktriangledown]$  \\[5pt]
     
    $(P=Q\ \textit{prop}\ [\Gamma])^{\blacktriangledown}$ & is defined as & $(\vert\vert P^\blacktriangledown\vert\vert, \prp (\vert\vert P^\blacktriangledown\vert\vert)) =_{ext} (\vert\vert Q^\blacktriangledown\vert\vert, \prp (\vert\vert Q^\blacktriangledown\vert\vert)): \prop_{\suni} \ [\Gamma^\blacktriangledown]$  \\[5pt]
      $(a\in  A\ \ [\Gamma])^{\blacktriangledown}$ & is defined as & $a^{\widetilde{\blacktriangledown}}:_{ext} A^{\blacktriangledown}\ [\Gamma^\blacktriangledown] $\\[5pt]
         
 $(a=b\in  A\ [\Gamma])^{\blacktriangledown}$ &is defined as & $a^{\widetilde{\blacktriangledown}} =_{ext} b^{\widetilde{\blacktriangledown}} : A^\blacktriangledown \ [\Gamma^\blacktriangledown]$\\[5pt]
 \hline
\end{tabular}
\end{center}

The interpretation of \emtt-constructors is defined as follows: 

\vspace{1.0em}
\begin{tabular}{|l|}
\hline
\\
$(\, \Sigma_{x\in A}B(x)\ [\Gamma])^{\blacktriangledown}\ \ourdef \Sigma_{x: A^{\blacktriangledown}}\, B(x)^{\blacktriangledown}\ [\Gamma^\blacktriangledown]$  \\[5pt]
$(\langle a,b\rangle)^{\blacktriangledown}\ \ourdef\ (a^{\widetilde{\blacktriangledown}}, b^{\widetilde{\blacktriangledown}})$\  \  \\[5pt]

$(\mathrm{El}_{\Sigma}(d, c))^{\blacktriangledown}\ \ourdef\ \mathsf{ind}_{\Sigma}(d^{\widetilde{\blacktriangledown}}, x. y. c(x,y)^{\widetilde{\blacktriangledown}})$\\[5pt]

$\prs ((\Sigma_{x\in A}B(x)\,)^{\blacktriangledown})\ \ourdef\ \mathfrak{s}_{\Sigma}(A^{\blacktriangledown}, \lambda x: A^{\blacktriangledown}. B(x)^{\blacktriangledown}, \prs (A^\blacktriangledown), \lambda x: A^{\blacktriangledown} .\prs(B(x)^{\blacktriangledown}))\ $ \\[5pt]

\hline
\end{tabular}

\begin{tabular}{|ll|}
\hline
&\\
$ (\Pi_{x\in A}B(x)\ [\Gamma])^{\blacktriangledown}\ \ourdef\ \Pi_{x:A^{\blacktriangledown}}\, B(x)^{\blacktriangledown}\ [\Gamma^\blacktriangledown]$
& $(\lambda x.b(x))^{\blacktriangledown}\ \ourdef\ \lambda x. b(x)^{\widetilde{\blacktriangledown}}$\ \\[5pt]

$\prs( (\Pi_{x\in A}B(x))^{\blacktriangledown})\ \ourdef\
\mathfrak{s}_{\Pi}(A^{\blacktriangledown}, \lambda x: A^{\blacktriangledown}. B(x)^{\blacktriangledown}, \lambda x: A^{\blacktriangledown} .\prs(B(x)^{\blacktriangledown}))\  $  

& $(\mathsf{Ap}(f, a) )^{\blacktriangledown}\ \ourdef\  f^{\widetilde{\blacktriangledown}}(a^{\widetilde{\blacktriangledown}})$\\[5pt]
\hline
\end{tabular}
 
 \begin{tabular}{|ll|}
 \hline
 &\\[5pt]
$(\mathsf{N_0}\ [\Gamma])^{\blacktriangledown}\ \ourdef\ \textbf{0}\ [\Gamma^\blacktriangledown]$ &
$(\mathsf{emp}_0(c))^{\blacktriangledown}\ \ourdef\ \mathsf{ind}_0(c^{\widetilde{\blacktriangledown}})$
\\[5pt]
$\prs((\mathsf{N_0})^{\blacktriangledown})\ \ourdef\ \mathfrak{s}_0\ $ & \\[5pt]
\hline
\end{tabular}

\begin{tabular}{|ll|}
\hline
&\\
$(\mathsf{N_1}\ [\Gamma])^{\blacktriangledown}\ \ourdef\ \textbf{1}\ [\Gamma^\blacktriangledown]$ & $(\star)^{\blacktriangledown}\ \ourdef\ \star$\\[5pt]
$\prs((\mathsf{N_1})^{\blacktriangledown})\ \ourdef\  \mathfrak{s}_1\ $ & 
$(\mathrm{El}_{\mathsf{N_1}}(t, c))^{\blacktriangledown}\ \ourdef\ \mathsf{ind_1}(t^{\widetilde{\blacktriangledown}},  c^{\widetilde{\blacktriangledown}})$\\[5pt]
\hline
\end{tabular}
 
 \begin{tabular}{|l|}
 \hline
\\[5pt]
$(A+B\ [\Gamma])^{\blacktriangledown}\ \ourdef\  A^{\blacktriangledown}+B^{\blacktriangledown}\ [\Gamma^\blacktriangledown]$\\[5pt]

$(\mathsf{inl}(a))^{\blacktriangledown}\ \ourdef\ \mathsf{inl}(a^{\widetilde{\blacktriangledown}})$\qquad
$(\mathsf{inr}(b))^{\blacktriangledown}\ \ourdef\ \mathsf{inr}(b^{\widetilde{\blacktriangledown}})$\\[5pt]

$(\mathrm{El}_+(c, d_A, d_B))^{\blacktriangledown}\ \ourdef\ \mathsf{ind}_+(c^{\widetilde{\blacktriangledown}}, x.d_{A}(x)^{\widetilde{\blacktriangledown}}, y. d_B(y)^{\widetilde{\blacktriangledown}})$\\[5pt]

$\prs((A+B)^{\blacktriangledown})\ \ourdef\  \mathfrak{s}_{+}(A^{\blacktriangledown}, B^{\blacktriangledown}, \prs(A^{\blacktriangledown}), \prs(B^{\blacktriangledown}))\ $\\[5pt]
 
\hline
\end{tabular}
 
 \begin{tabular}{|ll|}
 \hline
&\\
$(\mathsf{List}(A)\ [\Gamma])^{\blacktriangledown}\ \ourdef\ \mathsf{List}(A^{\blacktriangledown})\ [\Gamma^\blacktriangledown]$\ & $(\epsilon )^{\blacktriangledown}\ \ourdef\ \mathsf{nil}$\qquad
$(\mathsf{cons}(\ell, a))^{\blacktriangledown}\ \ourdef\ \mathsf{cons}(\ell^{\widetilde{\blacktriangledown}}, a^{\widetilde{\blacktriangledown}})$\\[5pt]

$\prs((\mathsf{List}(A))^{\blacktriangledown})\ \ourdef\ 
\mathfrak{s}_{\mathsf{List}}(A^{\blacktriangledown}, \prs(A^{\blacktriangledown})) $\ 
& 
$(\mathrm{El}_{\mathsf{List}}(c,d, l))^{\blacktriangledown}\ \ourdef\ \mathsf{ind}_{\mathsf{List}}( c^{\widetilde{\blacktriangledown}}, d^{\widetilde{\blacktriangledown}}, x.y.z. l(x,y,z)^{\widetilde{\blacktriangledown}})$\\[5pt]
\hline
\end{tabular}

\begin{tabular}{|ll|}
\hline
     &\\
  $(A/R\ [\Gamma])^{\blacktriangledown}\ \ourdef\ A^{\blacktriangledown}/ R^{\blacktriangledown}\ [\Gamma^\blacktriangledown] $\ & $([ a ])^{\blacktriangledown}\ \ourdef\ \mathsf{q}(a^{\widetilde{\blacktriangledown}})$ \\[5pt]
  $\prs((A/R)^{\blacktriangledown})\ \ourdef\ \mathfrak{s}_{Q}(A^{\blacktriangledown},  R^{\blacktriangledown}, \prs(A^{\blacktriangledown}), \prp(R^{\blacktriangledown}), r^{\blacktriangledown}) \mbox{ for some term }  r$ &
       $(\mathrm{El}_{Q}(p,c))^{\blacktriangledown}\ \ourdef\ \mathsf{ind_Q}(p^{\widetilde{\blacktriangledown}}, c^{\widetilde{\blacktriangledown}})$\\[5pt]
       
       $(\textbf{true}\in R(a,b)\ [\Gamma])^{\blacktriangledown}\ \ourdef\ p: R(a,b)^{\blacktriangledown}\ [\Gamma^{\blacktriangledown}]\ \mbox{for some term}\ p$& \\[5pt]
  \hline
\end{tabular}

\begin{tabular}{|l|}
\hline
     \\
     $(\mathcal{P}(1)\ [\Gamma])^{\blacktriangledown}\ \ourdef\ \mathsf{Prop}_{\funi}\ [\Gamma^\blacktriangledown]$\ \\[5pt] $([A])^{\blacktriangledown}\ \ourdef\ (\vert\vert A^{\blacktriangledown}\vert\vert ,  \prp(\vert\vert A^{\blacktriangledown}\vert\vert))$
     \\[5pt]
     $\prs((\mathcal{P}(1))^{\blacktriangledown})\ \ourdef\ \mathfrak{s}_{\mathsf{Prop_0}}$\\[5pt]
     
     $(\textbf{true}\in A\leftrightarrow B\ [\Gamma])^{\blacktriangledown}\ \ourdef\ p: \vert\vert A^{\blacktriangledown}\vert\vert \leftrightarrow \vert\vert B^{\blacktriangledown}\vert\vert\ [\Gamma^{\blacktriangledown}]\ \mbox{for some term}\ p$
     \\[5pt]
     \hline
\end{tabular}

\begin{tabular}{|ll|}
\hline
&\\
$ ( A\ \rightarrow \ \mathcal{P}(1)\ [\Gamma] )^{\blacktriangledown} \ourdef\ A^{\blacktriangledown}\ \rightarrow \ \mathsf{Prop}_{\funi }\ [\Gamma^\blacktriangledown]$  & $(\lambda x.b(x))^\blacktriangledown\ \ourdef\ \lambda x.b(x)^{\widetilde{\blacktriangledown}}\ $ \\[5pt]
$\prs ( ( A\ \rightarrow \ \mathcal{P}(1) )^{\blacktriangledown}) \ourdef  \mathfrak{s}_{\Pi}(A^\blacktriangledown, \lambda:A^\blacktriangledown.\prop_{\funi}, \mathfrak{s}_{\mathsf{Prop_0}}\ )$
 &
$(\mathsf{Ap}(f,a))^{\blacktriangledown}\ourdef\ f^{\widetilde{\blacktriangledown}}(a^{\widetilde{\blacktriangledown}})$\\[5pt]
\hline
\end{tabular}

\begin{tabular}{|ll|}
 \hline
&\\
$(\bot\ [\Gamma])^{\blacktriangledown}\ \ourdef\ \vert\vert\textbf{0}\vert\vert\ [\Gamma^\blacktriangledown]$ & 

$(\mathsf{true}\in C\ [\Gamma])^{\blacktriangledown}\ \ourdef\ \mathsf{ind_{\bot^{\blacktriangledown}}}( c^{\widetilde{\blacktriangledown}}): C^\blacktriangledown\ [\Gamma^\blacktriangledown]\ \mbox{for some term}\ c$\ \\[5pt]

$\prp((\bot)^{\blacktriangledown})\ \ourdef\ \mathfrak{p}_{\vert\vert\ \vert\vert}(\textbf{0}) $ 

& $\prs((\bot)^{\blacktriangledown})\ \ourdef\ \mathfrak{s}_{coe}((\bot)^\blacktriangledown, \prp((\bot)^{\blacktriangledown}) )$ \\[5pt]
\hline
\end{tabular}

\begin{tabular}{|l|}
\hline     
 \\[5pt]
 $(A\lor B\ [\Gamma])^{\blacktriangledown}\ \ourdef\ A^{\blacktriangledown}\lor B^{\blacktriangledown}\ [\Gamma^\blacktriangledown]$\\[5pt] $(\mathsf{true}\in A\lor B\ [\Gamma])^{\blacktriangledown}\  \ourdef\ \mathsf{inl}_{\lor}(a^{\widetilde{\blacktriangledown}}): A^\blacktriangledown \lor B^\blacktriangledown\ [\Gamma^\blacktriangledown]\ \mbox{for some term}\ a$\\[5pt]
$(\mathsf{true}\in A\lor B\ [\Gamma])^{\blacktriangledown}\ \ourdef\ \mathsf{inr}_{\lor}(b^{\widetilde{\blacktriangledown}}): A^\blacktriangledown \lor B^\blacktriangledown\ [\Gamma^\blacktriangledown]\ \mbox{for some term}\ b$\\[5pt]
$(\mathsf{true}\in C\ [\Gamma])^{\blacktriangledown}\ \ourdef\  \mathsf{ind}_{\lor}( d^{\widetilde{\blacktriangledown}},  x.c_1(x)^{\widetilde{\blacktriangledown}},  y.c_2(y)^{\widetilde{\blacktriangledown}}): C^\blacktriangledown\ [\Gamma^\blacktriangledown]\ \mbox{for some terms}\ c_1, c_2, d$\\[5pt]

$\prp((A\lor B)^{\blacktriangledown})\ \ourdef\  \mathfrak{p}_{\lor}(A^{\blacktriangledown}, B^{\blacktriangledown})\ $\\[5pt]

$\prs((A \lor B)^{\blacktriangledown})\ \ourdef\ \mathfrak{s}_{coe}((A\lor B)^{\blacktriangledown}, \prp((A\lor B)^{\blacktriangledown})) $ \\[5pt]
\hline
\end{tabular}

\begin{tabular}{|l|}
\hline
\\
$(A\land B\ [\Gamma])^{\blacktriangledown} \ourdef\ \vert\vert A^{\blacktriangledown}\times B^{\blacktriangledown}\vert\vert\ [\Gamma^\blacktriangledown]$\\[5pt]
$(\mathsf{true}\in A\land B\ [\Gamma])^{\blacktriangledown}\ \ourdef\ (a^{\widetilde{\blacktriangledown}},_{\land} b^{\widetilde{\blacktriangledown}}): \vert\vert A^\blacktriangledown\times B^\blacktriangledown\vert\vert\ [\Gamma^\blacktriangledown]\ \mbox{for some terms}\ a,b$
 \\[5pt]
  $(\mathsf{true}\in A\ [\Gamma])^{\blacktriangledown}\ \ourdef\ {\fpr}_{\land}( c^{\widetilde{\blacktriangledown}}): A^\blacktriangledown\ [\Gamma^\blacktriangledown]\ \mbox{for some term}\ c$\\[5pt]
  
   $(\mathsf{true}\in B\ [\Gamma])^{\blacktriangledown}\ \ourdef\ {\spr}_{\land}( c^{\widetilde{\blacktriangledown}}): B^\blacktriangledown\ [\Gamma^\blacktriangledown]\ \mbox{for some term}\ c$\\[5pt]
  
 $\prp((A\land B)^{\blacktriangledown})\ \ourdef\ \mathfrak{p}_{\vert\vert \times\vert\vert}(A^{\blacktriangledown}, B^{\blacktriangledown})$\\[5pt]
 
 $\prs((A \land B)^{\blacktriangledown})\ \ourdef\ \mathfrak{s}_{coe}((A\land B)^\blacktriangledown, \prp((A\land B)^{\blacktriangledown}))$ \\[5pt]
\hline
\end{tabular}
 
\begin{tabular}{|l|}
\hline
\\
$(A\rightarrow B\ [\Gamma])^\blacktriangledown\ \ourdef\ \vert\vert A^\blacktriangledown\rightarrow B^\blacktriangledown\vert\vert\ [\Gamma^\blacktriangledown]$\\[5pt]
$(\mathsf{true}\in A\rightarrow B\ [\Gamma])^\blacktriangledown\ \ourdef\ \lambda_{\rightarrow} x.b^{\widetilde{\blacktriangledown}}:\vert\vert A^\blacktriangledown\rightarrow B^\blacktriangledown\vert\vert\ [\Gamma^\blacktriangledown]\ \mbox{for some term}\ b$\\[5pt]

$(\mathsf{true}\in B\ [\Gamma])^\blacktriangledown\ \ourdef\ f_{\rightarrow}^{\widetilde{\blacktriangledown}} (a^{\widetilde{\blacktriangledown}}): B^\blacktriangledown\ [\Gamma^\blacktriangledown]\ \mbox{for some terms}\ a,f$\\[5pt]

$\prp((A\rightarrow B)^\blacktriangledown)\ \ourdef\ \mathfrak{p}_{\vert\vert\rightarrow \vert\vert}(A^\blacktriangledown, B^\blacktriangledown)$\\[5pt]

$\prs((A\rightarrow B)^\blacktriangledown )\ \ourdef\ \mathfrak{s}_{coe}(( A \rightarrow B)^\blacktriangledown , \prp((A\rightarrow B)^\blacktriangledown))$ \\[5pt]
\hline
\end{tabular}

\begin{tabular}{|l|}
\hline  
\\
$(\exists_{x\in A}B(x)\ [\Gamma])^{\blacktriangledown}\ \ourdef\  \exists_{x:A^{\blacktriangledown}}\, B(x)^{\blacktriangledown}\ [\Gamma^\blacktriangledown]$\\[5pt]

$(\mathsf{true}\in \exists_{x\in A}B(x)\ [\Gamma] )^{\blacktriangledown}\ \ourdef\ (a^{\widetilde{\blacktriangledown}},_{\exists} b^{\widetilde{\blacktriangledown}}): \exists_{x:A^{\blacktriangledown}}\, B(x)^{\blacktriangledown}\ [\Gamma^\blacktriangledown]\ \mbox{for some terms}\ a,b $\\[5pt] 

$(\mathsf{true}\in C\ [\Gamma])^{\blacktriangledown}\ \ourdef\ \mathsf{ind}_{\exists}( d^{\widetilde{\blacktriangledown}},  x.  y. c(x,y)^{\widetilde{\blacktriangledown}}): C^\blacktriangledown\ [\Gamma^\blacktriangledown]\ \mbox{for some terms}\ c,d $\\[5pt]

$\prp((\exists_{x\in A}B(x))^{\blacktriangledown})\ \ourdef\ 
\mathfrak{p}_{\exists}(A^{\blacktriangledown}, \lambda x: A^{\blacktriangledown}. B(x)^{\blacktriangledown} \, )$\\[5pt]

$\prs((\exists_{x\in A}B(x))^{\blacktriangledown})\ \ourdef\ \mathfrak{s}_{coe}( (\exists_{x\in A}B(x))^{\blacktriangledown}
, \,\prp((\exists_{x\in A}B(x))^{\blacktriangledown}))$ \\[5pt]

\hline

\end{tabular}

\begin{tabular}{|l|}
\hline
    \\
$(\forall_{x\in A}B(x)\ [\Gamma])^{\blacktriangledown}\ \ourdef  \vert\vert \Pi_{x:A^{\blacktriangledown}}\ B(x)^{\blacktriangledown}\vert\vert\ [\Gamma^\blacktriangledown]$\\[5pt]

$(\mathsf{true}\in \forall_{x\in A}B(x)\ [\Gamma])^{\blacktriangledown}\ \ourdef\ \lambda_{\forall} x.b(x)^{{\widetilde{\blacktriangledown}}}:\vert\vert \Pi_{x:A^{\blacktriangledown}}\ B(x)^{\blacktriangledown}\vert\vert\ [\Gamma^\blacktriangledown]\ \mbox{for some term}\ b $\\[5pt]
 
 $(\mathsf{true}\in B(a)\ [\Gamma])^{\blacktriangledown}\ \ourdef\ (f^{\widetilde{\blacktriangledown}})_\forall(a^{{\widetilde{\blacktriangledown}}}): B(a)^{\blacktriangledown}\ [\Gamma^\blacktriangledown]\ \mbox{for some terms}\ a,f $\\[5pt]

$\prp((\forall_{x\in A}B(x))^{\blacktriangledown})\ \ourdef\ \mathfrak{p}_{\vert\vert\Pi\vert\vert}(A^{\blacktriangledown}, \lambda x: A^{\blacktriangledown} .B(x)^\blacktriangledown)\  $\\[5pt]

 $\prs((\forall_{x\in A}B(x))^{\blacktriangledown})\ \ourdef\ \mathfrak{s}_{coe}((\forall_{x\in A}B(x))^\blacktriangledown, \, \prp((\forall_{x\in A}B(x))^{\blacktriangledown} ))$ \\[5pt]
 \hline
\end{tabular}

\begin{tabular}{|l|}
\hline
     \\
$(\mathrm{Eq}(A,a,b)\ [\Gamma])^{\blacktriangledown} \ourdef\ \vert\vert\mathrm{Id}_{A^{\blacktriangledown}}(a^{\widetilde{\blacktriangledown}}, b^{\widetilde{\blacktriangledown}})\vert\vert\ [\Gamma^\blacktriangledown]$\\[5pt]

$(\mathsf{true}\in \mathrm{Eq}(A,a,a)\ [\Gamma])^{\blacktriangledown}\ \ourdef\ \vert\mathsf{refl}_{a^{\widetilde{\blacktriangledown}}}\vert: \vert\vert \mathrm{Id}_{A^{\blacktriangledown}}(a^{\widetilde{\blacktriangledown}}, a^{\widetilde{\blacktriangledown}})\vert\vert\ [\Gamma^\blacktriangledown]\ \mbox{for some term}\ a$\\[5pt]
$\prp((\mathrm{Eq}(A,a,b))^{\blacktriangledown})\ \ourdef\ \mathfrak{p}_{\vert\vert\ \vert\vert}(A^{\blacktriangledown}, a^{\widetilde{\blacktriangledown}}, b^{\widetilde{\blacktriangledown}}, \mathrm{Id}_{A^{\blacktriangledown}}(a^{\widetilde{\blacktriangledown}},b^{\widetilde{\blacktriangledown}}) ) $\\[5pt]
$\prs((\mathrm{Eq}(A,a,b))^{\blacktriangledown})\ \ourdef\ \mathfrak{s}_{coe}( (\mathrm{Eq}(A,a,b))^{\blacktriangledown}\, ,\, \prp((\mathrm{Eq}(A,a,b))^{\blacktriangledown}))$ \\[5pt]
\hline
\end{tabular}

\end{definition}
\vspace{1.0em}

\begin{remark}
We could alternatively give a single clause for judgements with the proof-term \lq {\bf true}', namely $(\textbf{true})^{\blacktriangledown}\ \ourdef\ p$ for some proof-term $p$ in \hott. This would allow us to avoid to specify the interpretation of $\textbf{true}$ for each term constructor, since all these cases would be particular instances of this generic clause, but then we should make explicit how to recover them in the validity theorem.
\end{remark}

\begin{definition}
Let $(-)^{\blacklozenge}$ be a multifunctional interpretation from the raw-syntax of \emtt-types and terms judgements to the raw-syntax of \hott-types and terms judgements defined as follows: 

$$ (\mathcal{J})^{\blacklozenge}\ \ourdef\ (\mathcal{J})^{\blacktriangledown}\ \mbox{if $\mathcal{J}$ is a {\it type} judgement} $$

$$ (\mathcal{J})^{\blacklozenge}\ \ourdef\ (\mathcal{J})^{\widetilde{\blacktriangledown}}\ \mbox{if $\mathcal{J}$ is a {\it term} judgement} $$

\end{definition}

In order to define the interpretation of \emtt-judgements into the category $\setis$, we need to allow the possibility of regarding dependent types as arrows into the category and the following definition is introduced for this purpose: 

\begin{definition}
Let $\Gamma$ be a context in \hott, then we define by induction over the length of $\Gamma$ the indexed closure $Sig (\Gamma)$, which comes equipped with projections $\pi_{i}^{n}(z)$ for $z: Sig (\Gamma)$ and $i = 1, \ldots, n$

\begin{tabular}{l}
 \\     
$\mbox{If}\ \Gamma\ \ourdef\ x:A, \mbox{then}\ Sig\ (\Gamma)\ \ourdef\ A\ \mbox{and}\ \pi_{1}^{1}(z)\ \ourdef\ z$\\[5pt]
 $\mbox{If}\ \Gamma\ \ourdef\ \Delta, x:A\ \mbox{of length }\ n+1, \mbox{then}\ Sig\ (\Gamma)\ \ourdef\ (\Sigma_{z: Sig (\Delta)}\  A[\pi_{1}^{n}(z)/x_1, \ldots, \pi_{n}^{n}(z)/x_n]) $\\[5pt]
\end{tabular}

where $\pi_{i}^{n+1}(w)\ \ourdef\ \pi_{i}^{n}(\pi_{1}(w))$ for $i=1,\ldots, n$ and $\pi_{n+1}^{n+1}(w)\ \ourdef\ \pi_{2}(w)$ for any 

$w: \Sigma_{z: Sig(\Delta)} A [\pi_{1}^{n}(z)/x_1, \ldots, \pi_{n}^{n}(z)/x_n]$.

Moreover, we denote $\overline{a}$ the result of the substitution  of the free variables $x_1, \ldots, x_n$ in a term $a$ with $\pi_{i}^{n}(z)$ for $i=1, \ldots, n$ and $z: Sig(\Gamma)$.
\end{definition}

The definition of the multi-function interpretation  $(-)^{\blacklozenge}$ from the raw-syntax of \emtt\ to the raw-syntax of \hott\ allows us to define a direct interpretation $\fintd$ : $\emtt\rightarrow \setis$ of \emtt-judgements into the category $\setis$ described in definition \ref{hcat}

\begin{definition}
The interpretation $\fintd$\ : $\emtt\rightarrow \setis$ is defined by using the partial multi-function $(-)^{\blacklozenge}$ in the following way: 

\begin{itemize}
    \item[-] An \emtt-type judgements is interpreted as a projection in $\setis$
    
    $$\fintd\ (A\ type\ [\Gamma])\ourdef\ [\pi_1]: [Sig (\Gamma^{\blacklozenge}, A^{\blacklozenge})]\ \rightarrow\ [Sig(\Gamma^{\blacklozenge})]$$
    
    which amounts to derive $A^{\blacklozenge}\ [\Gamma^{\blacklozenge}]$ in \hott\ with canonical transports.
    
    \item[-] An \emtt-type equality judgement is interpreted as the equality of type interpretations in $\setis$
    
    $$\fintd\ (A = B\ type\ [\Gamma])\ourdef\ \fintd\ (A\ type\ [\Gamma])=_{\setis} {\it \fintd}\ (B\ type\ [\Gamma])$$
    
    which amounts to derive $ A^{\blacklozenge}\ [\Gamma^{\blacklozenge}] =_{ext}  B^{\blacklozenge}\ [\Gamma^{\blacklozenge}] $ and hence $ A^{\blacklozenge} =_{\suni}  B^{\blacklozenge}\ [\Gamma^{\blacklozenge}] $. 
    
    \item[-] An \emtt-term judgement is interpreted as a section of the interpretation of the corresponding type
    
    $$\fintd\ (a\in A\ [\Gamma])\ourdef\ [\langle z, \overline{a}^{\blacklozenge}\rangle]: [Sig(\Gamma^{\blacklozenge})] \rightarrow [Sig (\Gamma^{\blacklozenge}, A^{\blacklozenge})]$$
    
    which amounts to derive $a^{\blacklozenge}: A^{\blacklozenge}\ [\Gamma^{\blacklozenge}]$ in \hott\ with $A^{\blacklozenge}\ [\Gamma^{\blacklozenge}]$ equipped with canonical transports. 
    
    \item[-] An \emtt-term equality judgement is interpreted as the equality of term interpretations in $\setis$
    
    $$\fintd\ (a = b\in A\ [\Gamma])\ \ourdef\ \fintd\ (a\in A\ [\Gamma])=_{\setis} \fintd\ (b\in A\ [\Gamma])$$
    
    which amounts to derive $a^{\blacklozenge} =_{A^{\blacklozenge}} b^{\blacklozenge}\ [\Gamma^{\blacklozenge}] $, for some $a^{\blacklozenge}: A^{\blacklozenge}\ [\Gamma^{\blacklozenge}]$ and $b^{\blacklozenge}: A^{\blacklozenge}\ [\Gamma^{\blacklozenge}]$.
\end{itemize}
\end{definition}

In the following, given $\Gamma\ \ourdef \Delta', x_n : A_n, \Delta''$ with $\Delta''\ \ourdef\ x_{n+1}: A_{n+1}, \ldots, x_{m}: A_{m}$, then for every $a\ : A_n\ [\Delta']$ and for any type $B\ type\ [\Gamma]$, we denote the substitution of $x_n$ with $a$ in $B$ as $$B[a/x_n]\ type\ [\Delta', \Delta''_{a}]$$ instead of the extended form $$B[a/x_n] [x'_i/x_i]_{i= n+1,\ldots, m}\ type\ [\Delta', \Delta''_{a}]$$ where $$\Delta''_{a}\ \ourdef\ x'_{n+1}: A'_{n+1}, \ldots, x'_{m}: A'_{m}$$ and $$A'_{j}\ \ourdef\ A_{j} [a_n/x_n] [x'_i/x_i]_{i=n+2, \ldots, m}$$ if $n+2\leq m$, otherwise $A'_{n+1}\ \ourdef\ A_{n+1}[a_n/x_n]$. Moreover, if $\Delta''$ is the empty context, then $\Delta''_{a}$ is the empty context as well. We use similar abbreviations also for terms. 

\begin{lemma}[Substitution]\label{sub2}

For any \emtt-judgement $B\ type\ [\Gamma]$ interpreted in $\setis$ as 
$$[\pi_1]: [Sig(\Gamma^{\blacklozenge}, y: B^{\blacklozenge})]\rightarrow [Sig(\Gamma^{\blacklozenge})]$$
if $\Gamma\ \ourdef\ \Delta', x_{n}\in A_{n}, \Delta''$,  then for every \emtt-judgement $a\in A_n\ [\Delta']$ interpreted as 
$[\langle  z, \overline{a}^{\blacklozenge}\rangle]: [Sig (\Delta'^{\blacklozenge})]\rightarrow [Sig( \Delta'^{\blacklozenge}, x_n \in A_n^{\blacklozenge} )],$
$$ \fintd (B [a/x_n]\ type\ [\Delta', \Delta_{a}''])\ =_{\setis} [\pi_1]: [Sig (\Delta'^{\blacklozenge}, \Delta_{a}''^{\blacklozenge}, y\in B^{\blacklozenge}[a^{\blacklozenge}/x_n])]\rightarrow [Sig(\Delta'^{\blacklozenge}, \Delta_{a}''^{\blacklozenge})] $$

Similarly, for any \emtt-judgement $b\in B\ [\Gamma]$, where $B$ and $\Gamma$ are exactly as specified above, and which is interpreted as $[\langle z, \overline{b}^{\blacklozenge}\rangle]: [Sig(\Gamma^{\blacklozenge})]\rightarrow [Sig(\Gamma^{\blacklozenge}, y: B^{\blacklozenge})]$, 
$$\begin{array}{l} \fintd ( b [a/x_n]\in B[a/x_n]\ [\Delta', \Delta''_{a}]) =_{\setis}\\ \qquad\qquad\qquad[\langle z, \overline{b^{\blacklozenge} [a^{\blacklozenge}/x_n]\ \rangle}\ ]:[Sig(\Delta'^{\blacklozenge}, \Delta_{a}''^{\blacklozenge})] \rightarrow  [Sig (\Delta'^{\blacklozenge}, \Delta_{a}''^{\blacklozenge}, y\in B^{\blacklozenge}[a^{\blacklozenge}/x_n])] . \end{array}$$

\end{lemma}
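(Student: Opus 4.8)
The plan is to factor the categorical statement through a raw-syntax substitution lemma for the multifunction $(-)^{\blacklozenge}$, in the spirit of Lemma~\ref{sub-t} for $(-)^{\sqbullet}$, and then to transfer it along the $Sig$ construction into $\setis$. Since the type-level interpretation $(-)^{\blacktriangledown}$ is only multi-functional and the tilde-operation $(-)^{\widetilde{\blacktriangledown}}$ inserts \can\ isomorphisms to re-align types, one cannot hope for a strict syntactic identity as in Lemma~\ref{sub-t}; the correct raw-syntax statement is the $=_{ext}$-equality
$$ (B[a/x_n])^{\blacklozenge}\ =_{ext}\ B^{\blacklozenge}[a^{\blacklozenge}/x_n] \qquad\text{and}\qquad (b[a/x_n])^{\blacklozenge}\ =_{ext}\ b^{\blacklozenge}[a^{\blacklozenge}/x_n], $$
each asserting a \can\ isomorphism connecting the two sides. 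First I would establish this by structural induction on the raw type $B$, carried out mutually with the raw term $b$.

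The base cases — the ground types, $\propo$ and $\mathcal{P}(1)$ — are immediate, since these carry no free occurrence of $x_n$ and substitution acts trivially, so the identity is the required \can\ isomorphism. In each inductive case the interpretation $(-)^{\blacktriangledown}$ is defined compositionally over the corresponding \hott-constructor, e.g. $(\Sigma_{x\in A}B(x))^{\blacktriangledown}\ \ourdef\ \Sigma_{x:A^{\blacktriangledown}}B(x)^{\blacktriangledown}$, and syntactic substitution commutes with that constructor. Hence I would apply the inductive hypothesis to the immediate subexpressions and assemble the pieces into a single \can\ isomorphism, invoking the clauses of Definition~\ref{caniso} which exhibit \can\ isomorphisms as closed under exactly these constructors, together with the lemma stating that \can\ isomorphisms are closed under substitution. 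The tilde-corrections appearing on the interpreted subterms are reconciled by uniqueness of \can\ isomorphisms (Proposition~\ref{isocanprop}), which guarantees that any two ways of inserting corrections agree up to propositional equality.

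To lift this to $\setis$ I would first record the analogous statement for contexts: the iterated $\Sigma$-type $Sig((\Delta',\Delta''_a)^{\blacklozenge})$ is canonically isomorphic to $Sig((\Delta')^{\blacklozenge},(\Delta''_a)^{\blacklozenge})$, the isomorphism being built componentwise from the $=_{ext}$-isomorphisms for the types of $\Delta''_a$ (again using closure of \can\ isomorphisms under $\Sigma$, Definition~\ref{caniso}). This \can\ isomorphism on the total space restricts to one on the base and, by construction, commutes with the projection $\pi_1$. Since by Definition~\ref{hcat} equality of morphisms in $\setis$ is precisely commutation with \can\ isomorphisms of domain and codomain, the projection interpreting $B[a/x_n]$ and the substituted projection on the right-hand side are thereby identified as arrows of $\setis$; by Univalence these \can\ isomorphisms moreover yield propositional equalities of the underlying h-sets. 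The term case follows in the same way: interpreting $b\in B$ as the section $[\langle z,\overline{b}^{\blacklozenge}\rangle]$ and post-composing with the \can\ isomorphisms supplied by the raw-syntax lemma realigns it with $[\langle z,\overline{b^{\blacklozenge}[a^{\blacklozenge}/x_n]}\rangle]$.

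The main obstacle I expect lies in the dependent case with nonempty $\Delta''$. There, substituting $a$ for $x_n$ in $B$ simultaneously forces substitution into all the later context types $A_{n+1},\dots,A_m$, producing $\Delta''_a$, and one must show that the transports arising along the base isomorphism — which exist and are \can\ by Lemma~\ref{caninv} and its corollary — assemble coherently into a single \can\ isomorphism of the whole $Sig$. Matching this simultaneous substitution against the strictly componentwise recipe for \can\ isomorphisms over $\Sigma$ and $\Pi$ in Definition~\ref{caniso} is the delicate bookkeeping. The facts that make it close are that transports of h-sets in $\setem$ do not depend on their proof-term and compose, and that \can\ isomorphisms over dependent sums are uniquely determined (Proposition~\ref{isocanprop}), so the various correction paths are forced to agree and the projections are genuinely matched.
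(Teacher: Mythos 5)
Your proposal is correct and follows essentially the same route as the paper, whose entire proof reads ``By induction over the interpretation of raw types and terms after noting that canonical isomorphisms are closed under substitution'' --- i.e.\ exactly the structural induction on raw syntax combined with closure of \can\ isomorphisms under substitution that you carry out. You simply supply far more detail (the $=_{ext}$ formulation, the lifting along $Sig$ into $\setis$, and the bookkeeping for nonempty $\Delta''$) than the paper's one-line argument makes explicit.
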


\begin{proof}
By induction over the interpretation of raw types and terms after noting that canonical isomorphisms are closed under substitution.
\end{proof}

\begin{theorem}
If $A\ type\ [\Gamma]$ is derivable in \emtt, then $\fintd\ ( A\ type\ [\Gamma])$ is well-defined. 

If $a\in A\ [\Gamma]$ is derivable in \emtt, then $\fintd\ ( a\in A\ [\Gamma])$ is well-defined. 

If $A\ type\ [\Gamma]$, $B\ type\ [\Gamma]$ and $A = B\ [\Gamma]$ are derivable in \emtt, then $\fintd\ (A = B\ [\Gamma])$ is well-defined. 

If $a\in A\ [\Gamma]$, $b\in A\ [\Gamma]$ and $a = b\ \in A\ [\Gamma]$ are derivable in \emtt, then $\fintd\ (a = b\in A\ [\Gamma])$ is well-defined.

Therefore, \emtt\ is valid with respect to the interpretation $\fintd$.
\end{theorem}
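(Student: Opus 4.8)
The plan is to proceed by a single simultaneous induction on the derivation of the \emtt-judgement $\mathcal{J}$, establishing all four clauses at once, since the formation of types, terms, and their definitional equalities in \emtt{} is mutually recursive and cannot be separated. At each step I would unfold the relevant clause of $(-)^{\blacklozenge}$ from the interpretation definition and check that the resulting \hott-expression, together with its chosen witness $\prs(-)$ or $\prp(-)$ that it is an h-set or h-proposition, is derivable in \hott{} and lives in the correct universe ($\funi$ for sets and small propositions, $\suni$ for collections and propositions). I must also verify that each judgement form lands in the intended categorical shape inside $\setis$: a type judgement as a projection $[\pi_1]$, a term judgement as a section of the corresponding projection, and the two equality judgements as the associated equalities of morphisms. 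For the type- and term-formation rules this reduces to the closure lemmas (Lemma~\ref{setclosure}, Lemma~\ref{firstcon}) and the sub-universe classifier (Lemma~\ref{classifier}), exactly as in the validity theorem for \mtt{} (Theorem~\ref{mtt-val}), with the extra bookkeeping that every dependent type carries canonical transport operations, which is guaranteed by Lemma~\ref{caninv}.

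The crux of the argument, and what genuinely distinguishes this from the \mtt-case, lies in validating the definitional-equality and conversion rules. Here I would use that an \emtt-definitional equality $A = B\ type\ [\Gamma]$ is interpreted as $A^{\blacklozenge} =_{ext} B^{\blacklozenge}$, namely the existence of a canonical isomorphism, which by Univalence upgrades to a genuine propositional equality $A^{\blacklozenge} =_{\suni} B^{\blacklozenge}$ and hence to equality in $\setis$. The multi-functional nature of the interpretation --- the fact that a single raw expression may be sent to several \hott-expressions depending on which canonical isomorphisms are inserted to correct dependencies --- is tamed by the uniqueness of canonical isomorphisms (Proposition~\ref{isocanprop}): any two candidate interpretations of the same \emtt-expression are connected by a canonical isomorphism and are therefore identified in $\setis$, so the induced map on the quotient category is single-valued. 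Substitution commutes with the interpretation by the Substitution Lemma~\ref{sub2}, which itself rests on the closure of canonical isomorphisms under substitution, and this is what makes the cases for the congruence and substitution rules go through.

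I expect the main obstacle to be the reflection rule for the extensional propositional equality $\mathrm{Eq}(A,a,b)$. Since $A^{\blacklozenge}$ is an h-set, $\mathrm{Id}_{A^{\blacklozenge}}(a^{\blacklozenge},b^{\blacklozenge})$ is an h-proposition by Lemma~\ref{id}, so by Lemma~\ref{invprop} the truncation $\vert\vert \mathrm{Id}_{A^{\blacklozenge}}(a^{\blacklozenge},b^{\blacklozenge})\vert\vert$ interpreting $\mathrm{Eq}(A,a,b)$ is propositionally equal to $\mathrm{Id}_{A^{\blacklozenge}}(a^{\blacklozenge},b^{\blacklozenge})$ itself. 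Consequently a proof of the interpreted premise yields an honest identity proof $a^{\blacklozenge} =_{A^{\blacklozenge}} b^{\blacklozenge}$, which is precisely the content of the interpretation of the conclusion $a = b\in A\ [\Gamma]$; this is what lets reflection be validated without collapsing distinct terms or breaking the interpretation. The remaining proof-irrelevance rules \emph{prop-mono} and \emph{prop-true} are then automatic: every \emtt-proposition is interpreted as a (truncated) h-proposition in which any two inhabitants are propositionally equal, so the chosen witness $\prp(-)$ supplies the required identifications, and the $\beta$-conversions for the truncated connectives and quantifiers follow from Lemma~\ref{betadisj}, Lemma~\ref{betaex} and Lemma~\ref{beta}. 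Assembling all these cases completes the induction and yields the soundness of $\fintd$, hence the validity of \emtt{} with respect to the interpretation.
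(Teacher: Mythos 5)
Your proposal is correct and follows essentially the same route as the paper's proof: induction on derivations, with the conversion and definitional-equality rules handled by canonical isomorphisms whose uniqueness (Proposition~\ref{isocanprop}) makes the multi-valued interpretation single-valued in $\setis$, Univalence upgrading $=_{ext}$ to propositional equality, and the reflection rule discharged by untruncating $\mathrm{Id}$ over an h-set. The only cases you do not explicitly name --- effectiveness of quotients (which needs Remark~\ref{eff-quot}) and the $\mathcal{P}(1)$ rules (which need $\mathsf{propext}$) --- fit into your general scheme using facts already recalled in the preliminaries, so nothing essential is missing.
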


\begin{proof}
The proof is by induction over the derivation of judgements. Sets in $\set_{mf}$ form a $\Pi$-pretopos, therefore they possess enough structure to interpret \emtt-type and term constructors.  Note that conversion rules are interpreted correctly by \can\ isomorphisms, since it is possible to coerce a term along a \can\ isomorphism for the definitions given above. Indeed the rule 

\begin{center}
    \AxiomC{$a\in A\ [\Gamma]$}
    \AxiomC{$A = B\ type\ [\Gamma]$}
    \RightLabel{$\mathsf{conv}$}
    \BinaryInfC{$a\in B\ [\Gamma]$}
    \DisplayProof
    \end{center}
is interpreted as follows: by inductive hypothesis, $\fintd (a\in A\ [\Gamma])$ is well-defined and amounts to derive $a^{\blacklozenge}: A^{\blacklozenge}\ [\Gamma^{\blacklozenge}]$ for some $a^{\blacklozenge}$ and some $A^{\blacklozenge}\ type\ [\Gamma^{\blacklozenge}]$ in \hott; further, $\fintd (A = B\ type\ [\Gamma])$ is well-defined too and amounts to derive $A^{\blacklozenge} =_{ext} B^{\blacklozenge}\ [\Gamma^{\blacklozenge}]$ for some canonical isomorphism $\mu: A^{\blacklozenge}\rightarrow B^{\blacklozenge}\ [\Gamma^{\blacklozenge}]$ and for some $A^{\blacklozenge}\ type\ [\Gamma^{\blacklozenge}], B^{\blacklozenge}\ type\ [\Gamma^{\blacklozenge}]$ in \hott\ and thus, by Univalence, it boils down to $A^{\blacklozenge} =_{\guni} B^{\blacklozenge}\ [\Gamma^{\blacklozenge}]$. Therefore, $\fintd( a \in B\ [\Gamma])$ is well-defined, since $\mu(a): B^{\blacklozenge}\ [\Gamma^{\blacklozenge}, a:A^{\blacklozenge}]$ is derivable and, moreover, such an isomorphism is unique up to propositional equality.

The power collection of the singleton $\mathcal{P}(1)$ is interpreted as $\prop_{\funi} : \suni$ together with a proof $\prs ((\mathcal{P}(1)^{\blacklozenge})): \isset ( (\mathcal{P}(1)^{\blacklozenge}))$. The introduction rule

\begin{center}
    \AxiomC{$A\ prop_s\ [\Gamma]$}
    \RightLabel{I-P}
    \UnaryInfC{$[A]\in \mathcal{P}(1)\ [\Gamma]$}
\DisplayProof
\end{center}

\noindent
is validated as follows: by induction hypothesis,
$\fintd( A\ prop_s\ [\Gamma])$ is well-defined and amounts to derive $\vert\vert A^{\blacklozenge}\vert\vert: \funi\ [\Gamma^{\blacklozenge}]$ together with $\prp(\vert\vert A^{\blacklozenge}\vert\vert): \isprop (\vert\vert A^{\blacklozenge}\vert\vert)$. Therefore the conclusion is immediately valid, since it boils down to derive $(\vert\vert A^{\blacklozenge}\vert\vert,  \prp(\vert\vert A^{\blacklozenge}\vert\vert)): \prop_{\funi}\ [\Gamma^{\blacklozenge}]$.

Then there are the following two rules: 

\begin{center}
    \AxiomC{$\textbf{true}\in A\leftrightarrow B\ [\Gamma]$}
\RightLabel{$\mathsf{eq}\mbox{-}\mathcal{P}(1)$}
\UnaryInfC{$[A] = [B]\in \mathcal{P}(1)\ [\Gamma]$}
\DisplayProof
\hskip1.5em
\AxiomC{$[A] = [B]\in \mathcal{P}(1)\ [\Gamma]$}
\RightLabel{$\mathsf{eff}\mbox{-}\mathcal{P}(1)$}
\UnaryInfC{$\textbf{true}\in A\leftrightarrow B\ [\Gamma]$}
\DisplayProof
\end{center}

For the first: by induction hypothesis, $\fintd( \textbf{true} \in A\leftrightarrow B\ [\Gamma])$ is well-defined and hence there exists a proof-term $p$ such that $p: \vert\vert A^{\blacklozenge}\vert\vert \leftrightarrow \vert\vert B^{\blacklozenge}\vert\vert\ [\Gamma^{\blacklozenge}]$ is derivable and $(\textbf{true})^{\blacklozenge}\ \ourdef\ p$, but then by Propositional Extensionality we can infer $\vert\vert A^{\blacklozenge}\vert\vert =_{\funi} \vert\vert B^{\blacklozenge}\vert\vert\ [\Gamma^{\blacklozenge}]$ and then the conclusion is valid, because $(\vert\vert A^{\blacklozenge}\vert\vert, \prp(\vert\vert A^{\blacklozenge}\vert\vert)) =_{\prop_{\funi}} (\vert\vert B^{\blacklozenge}\vert\vert, \prp(\vert\vert B^{\blacklozenge}\vert\vert))$ holds. The latter instead trivially follows by definition of $(-)^{\blacklozenge}$. 

For \emtt-quotients we have the {\it effectiveness} rule: 

\begin{center}
\AxiomC{$a\in A\ [\Gamma]$}
\AxiomC{$b\in A\ [\Gamma]$}
\AxiomC{$[a] = [b]\in A/R\ [\Gamma]$}
\AxiomC{$A/R\ set\ [\Gamma]$}
\RightLabel{$\mathsf{eff}\mbox{-}Q$}
\QuaternaryInfC{$\textbf{true}\in R(a,b)\ [\Gamma]$}
\DisplayProof

\end{center}

which is interpreted as follows: by induction hypothesis, $\fintd$ applied to the premises is well-defined and this amounts to derive that there exist $a^{\blacklozenge}, b^{\blacklozenge}$ in \hott\ such that $a^{\blacklozenge}: A^{\blacklozenge}\ [\Gamma^{\blacklozenge}]$, $b^{\blacklozenge}: A^{\blacklozenge}\ [\Gamma^{\blacklozenge}]$, $ \mathsf{q}(a^{\blacklozenge}) =_{A^{\blacklozenge}/R^{\blacklozenge}} \mathsf{q}(b^{\blacklozenge})\ [\Gamma^{\blacklozenge}]$ are derivable and $A^{\blacklozenge}/R^{\blacklozenge}: \funi\ [\Gamma^{\blacklozenge}]$ together with a proof $\prs(A^{\blacklozenge}/R^{\blacklozenge}): \isset (A^{\blacklozenge}/R^{\blacklozenge})$ is derivable as well for some $A^{\blacklozenge}$ and $R^{\blacklozenge}$. Since {\it set} quotients in \hott\ are effective (see remark \ref{eff-quot}), then the interpretation of the conclusion is well-defined and the effectiveness rule is validated by our interpretation. Indeed, for some \hott-term $p$ such that $(\textbf{true})^{\blacklozenge}\ \ourdef\ p$, we can derive $p: R(a,b)^{\blacklozenge}\ [\Gamma^{\blacklozenge}]$.

The reflection rule for extensional propositional equality

\begin{center}
    \AxiomC{$\textbf{true}\in \mathrm{Eq}(A, a,b)\ [\Gamma]$}
    \RightLabel{E-Eq}
    \UnaryInfC{$a = b\in A\ [\Gamma]$}
    \DisplayProof
\end{center}

is trivially validated by our interpretation. Indeed, if we assume that $\fintd$ is well-defined for the premise, then this means that $p: \mathrm{Id}_{A^{\blacklozenge}}(a^{\blacklozenge}, b^{\blacklozenge})\ [\Gamma^{\blacklozenge}]$ is derivable for some $a^{\blacklozenge}, b^{\blacklozenge}: A^{\blacklozenge}$ and some proof-term $p$. But then the interpretation of the conclusion is well-defined as well, since it amounts to derive $p: \mathrm{Id}_{A^{\blacklozenge}}(a^{\blacklozenge}, b^{\blacklozenge})\ [\Gamma^{\blacklozenge}]$ for some $p$.

In general, the interpretation of the judgements with proof-term ${\bf true}$ works by restoring a corresponding proof-term in the intensional setting: $\fintd ({\bf true}\in A\ [\Gamma]) $  amounts to derive that there exists a term $p$ such that $p: A^{\blacklozenge}\ [\Gamma^{\blacklozenge}]$ is derivable in \hott\ and where $p$ corresponds to $(\textbf{true})^{\blacklozenge}$. By way of example, let us consider the following rule: 

\begin{center}
    \AxiomC{$a\in A\ [\Gamma]$}
    \RightLabel{I-Eq}
    \UnaryInfC{$\textbf{true}\in \mathrm{Eq}(A, a,a)\ [\Gamma]$}
    \DisplayProof
\end{center}

By induction hypothesis, $\fintd (a\in A\ [\Gamma])$ is well-defined and hence we can derive $a^{\blacklozenge}: A^{\blacklozenge}\ [\Gamma^{\blacklozenge}]$ for some term $a^{\blacklozenge}$ in \hott; then the interpretation of the conclusion is also well-defined, since $\vert \mathsf{refl}_{a^{\blacklozenge}}\vert : \vert\vert \mathrm{Id}_{A^{\blacklozenge}}(a^{\blacklozenge}, a^{\blacklozenge})\vert\vert\ [\Gamma^{\blacklozenge}]$ is derivable and $(\textbf{true})^{\blacklozenge}\ \ourdef\ \vert \mathsf{refl}_{a^{\blacklozenge}}\vert$. Therefore, the rule I-Eq is validated by our interpretation.

Finally, note that the validity of $\beta$-rules also depends on the substitution lemma \ref{sub2}.
\end{proof}

\begin{remark}\label{truejudg}

An important feature of the interpretation of \emtt\ is that it can be regarded as an extension of Martin-L\"of's interpretation of {\it true judgements} \cite{ML84, sienlec}. A judgement of the form $A\ true$ must be read intuitionistically as \lq there exists a proof of $A$'. In \emtt\ we know that there exists a unique canonical inhabitant for propositions denoted by {\bf true} and hence we have that $A\ true \ourdef \textbf{true} \in A$. By applying the interpretation defined above, we can recover a proof-term $p$ such that $p : A^{\blacklozenge}$. Such $p$ could be considered as a typed realizer. Indeed, as a result of the validity of the interpretation, true judgments are endowed with computational content. However, this result was already achieved in the interpretation of \emtt\ in \mtt\ given in \cite{m09}. We just remark that this applies also to the present interpretation.  

\end{remark}

\begin{remark}\label{alt}
We could have interpreted \emtt\ within \hott\ in another way by employing as an intermediate step the interpretation of \emtt\ within the quotient model
construction   $\cqis$\  done in \cite{m09}. The reason is that this quotient model construction could be functorially mapped into $\setis$ by employing set-quotients of
\hott\ and a variation of the   interpretation $(-)^\bullet$ of \mtt\ within \hott\
where all \emtt\ propositions are interpreted as truncated propositions (as in remark \ref{ptrunc})
in order to guarantee that the canonical isomorphisms defined
in \cite{m09}
betweeen extensional dependent types, which are actually dependent setoids (the word \lq\lq setoid''
was avoided in \cite{m09} because \mtt-types are not all called sets!),
are sent to canonical isomorphisms of \hott\ as defined in \ref{caniso}.

The  existence of such an alternative interpretation  in $\setis$  is also expected  for categorical reasons. First, $\cqis$
is an instance of a general categorical construction called {\it elementary quotient completion} in \cite{QCFF,EQC}. Second, such a completion satisfies
a universal property  with respect to suitable  Lawvere's elementary doctrines
closed under stable effective quotients including as an example
the elementary doctrine of h-propositions indexed over a suitable syntactic category of h-sets
of \hott\  thanks to the presence
of set-quotients in \hott.
However,  it is not guaranteed that the resulting translation from \emtt\ into \hott\  shows that \emtt\
is
compatible with \hott\ by construction. We think that  the best way to show this would be to check that this alternative interpretation is \lq\lq isomorphic'' to the one described in this section
according to a suitable notion of  isomorphism between interpretations of \emtt\  which would
be better described after shaping both interpretations in categorical terms
as functors from a suitable syntactic category of \emtt\ into
a category with families, in the sense of \cite{Dybjer95}, built out of $\setis$. The precise definition of this alternative
compatible translation of \emtt\ within \hott\ and the possible use of an heterogeneous equality as in 
 \cite{ABKT19,WinST19}  are left to future work.
\end{remark}

\begin{remark}
Note that   the interpretations of   \mtt\ and \emtt\ within \hott\ presented in the previous section,  interpret both the \mtt-universe of small propositions
$\mathsf{Prop_s}$ and the \emtt\ power-collection $\mathcal{P}(1)$ of  the singleton set as the set $\mathsf{Prop}_{\funi}$ of h-propositions in the first universe up to propositional equality.  
Indeed, we could have interpreted the equality judgements  of \mtt\  concerning the definitional equality of types and terms as done
for \emtt.  

However, we have chosen to interpret the definitional
equality of \mtt-types and terms as {\it definitional equality of types and terms of \hott}
to preserve not only the meaning of \mtt-sets and propositions but also the type-theoretic distinction between definitional and propositional equality which disappears in the extensional version of dependent type theories
as \emtt.
\end{remark}

\begin{remark}[{\bf Related Works}]
Of course, the already cited work by M. Hofmann in  \cite{Hof95, HofTh} is related to the one presented here, being related to the interpretation of \emtt\ into \mtt\
in \cite{m09} as said in the Introduction.
Hofmann aimed to show the conservativity of extensional type theory over the intensional one extended with function extensionality and uniqueness of identity proofs axioms. His approach is  semantic since he employed a  category with families  quotiented under canonical isomorphisms.
The drawback of using such a semantic approach is that  the whole development relies on the Axiom of Choice, which allows to pick out a representative from each equivalence class
involved in the construction. 

Hofmann's interpretation was made effective later in  \cite{Oury05, WinST19} by defining a syntactical translation  which
is closed to our interpretation of \emtt\ into \hott.
Both interpretations are actually multifunctional since they  associates to any judgment in the source extensional type theory  a set of possible judgements in the target intensional type theory linked by means of an heterogenous equality in \cite{Oury05, WinST19}  and by canonical isomorphisms in ours.

Note that  our interpretation does not achieve any conservativity result over \hott: first, \emtt\ is not an extension of \hott\ and moreover the derivability of the axiom  of unique choice in \hott\  prevents any conservativity result because it is not valid in \emtt\ (see Remark \ref{ac}). 

\end{remark}

\section{Conclusions}

We have shown how to interpret both levels of \mf\ within \hott\  in a compatible way by  preserving the meaning of logical and set-theoretical constructors. Higher inductive set-quotients, Univalence for h-propositions in the first universe $\funi$ and function extensionality for h-sets within the second universe $\suni$ are the additional principles on the top of Martin-L{\"of}'s type theory which are needed to interpret \emtt\  within \hott\ in a way that preserves compatibility. On the other hand, the interpretation also works thanks to the possibility of defining \can\ isomorphisms within \hott. 

In the future we hope to investigate the alternative translation of \emtt\ within \hott\ mentioned in remark
\ref{alt}.
Moreover, we would like to employ an extension of \hott\ with Palmgren's superuniverse to interpret both levels of \mf\ extended with inductive and coinductive definitions
as in \cite{cind,MS23}. 

As a relevant consequence of the results shown here, both levels of  \mf\ inherit a computable model where proofs are seen as programs 
    in \cite{SA21}  and a model witnessing its consistency with Formal Church's thesis 
  in  \cite{su22}.  We leave to future work to relate them with those already available for \mf\  extended with Church's thesis in
  \cite{peff}, \cite{IMMS},\cite{mmr21}, \cite{cind},  and in particular with the predicative variant of Hyland's Effective Topos
  in \cite{peff}. 
 It would also be very relevant from the computational point of view to relate \mf\ and its extensions in \cite{cind} with  Berger and Tsuiki's logic presented in \cite{BT21} as a  framework for program extraction from proofs.

\subsection*{Acknowledgments}  The second author  acknowledges very useful discussions with Thorsten Altenkirch, Pietro Sabelli, and Thomas Streicher on the topic of this paper.
Both  authors wish to thank Steve Awodey  for hosting them at CMU (in different periods)
and for the very helpful discussions with him  and his collaborators Jonas Frey and Andrew Swan.

Last but not least, we thank the referees for their very helpful suggestions.
\bibliography{bibliomfhott}
\bibliographystyle{alpha}

\subsection*{Appendix A: The translation of  \mtt-syntax in \hott}

Here we spell out the interpretation of the raw syntax of \mtt-types and terms as raw types and terms  of \hott.
First of all, all variables in \mtt\ are translated as variables of \hott\ without changing the name
$$x^\sqbullet\ourdef x$$
Then the interpretation of specific \mtt-types and terms
is defined in the following table:

\vspace{1.0em}
\begin{tabular}{|l|}
\hline
\\[5pt]
$(\mathsf{prop_s} )^{\sqbullet} \ourdef  \mathsf{Prop}_{\funi}$\\[5pt]
$\prs ((\mathsf{prop_s} )^{\sqbullet} ) \ourdef \mathfrak{s}_{\mathsf{Prop_0}} $ \\[8pt]

$(\tau(p))^{\sqbullet}\ourdef  \fpr (p^\sqbullet) $ \\[5pt]
$\prp ( (\tau(p))^{\sqbullet}) \ourdef \spr(p^\sqbullet)$  \\[5pt]
$\prs ( (\tau(p))^{\sqbullet}) \ourdef \mathfrak{s}_{coe} ((\tau(p))^{\sqbullet} , \prp (( \tau(p))^{\sqbullet})  )$  \\[5pt]

\begin{tabular}{l}
\\[5pt]
    $(\widehat{\bot})^{\sqbullet} \ourdef \ (\textbf{0}, \ \mathfrak{p}_0 )$\qquad $({\widehat{\top} })^\sqbullet \ \ourdef \ ( {\bf 1} , \ \mathfrak{p}_1 \, ) $\\[5pt]
  $(p\widehat{\lor} q)^\sqbullet  \ourdef (\fpr( p^\sqbullet)  \lor \fpr(q^\sqbullet) \ ,\ \mathfrak{p}_{\lor }(\fpr(p^\sqbullet),\fpr (q^\sqbullet )\, )) $\\[5pt]
$(p\widehat{\land} q)^\sqbullet \ourdef 
(\fpr(p^\sqbullet) \times \fpr(q^\sqbullet) \ ,\ \mathfrak{p}_{\times}(\fpr(p^\sqbullet),\fpr (q^\sqbullet ) , \spr(p^\sqbullet), \spr(q^\sqbullet)\, )) $\\[5pt]
$(p\widehat{\rightarrow}q)^\sqbullet\ \ourdef\ (\fpr(p^\sqbullet) \rightarrow \fpr(q^\sqbullet) \ ,
\ \mathfrak{p}_\rightarrow  (\fpr(p^\sqbullet),\fpr (q^\sqbullet ) , \spr(p^\sqbullet), \spr(q^\sqbullet)\, ))     $\\[5pt]
$(\widehat{\exists}_{x\in A}\, p(x))^\sqbullet\ \ourdef\  (\exists_{x:A^\sqbullet} \ p(x)^\sqbullet,\ \mathfrak{p}_\exists ( A^\sqbullet,\lambda x. \fpr(p(x)^\sqbullet))\  \, ) $\\[5pt]
$(\widehat{\forall}_{x\in A}\, p (x))^\sqbullet\ \ourdef\ (\Pi_{x:A^\sqbullet}\ p(x)^\sqbullet\ ,
\ \mathfrak{p}_{\Pi}(A^\sqbullet, \lambda x.\fpr( p(x)^\sqbullet),   \,\lambda x.\spr( p(x)^\sqbullet) ))$\\[5pt]
$(\widehat{\mathrm{Id}}(A,a,b))^\sqbullet\ \ourdef\ (\mathrm{Id}_{A^\sqbullet}(a^\sqbullet,b^\sqbullet)\ ,\ \mathfrak{p}_{Id}(A^\sqbullet, \prs(A^\sqbullet), a^\sqbullet, b^\sqbullet\ ))$ \\[5pt]
\end{tabular}\\[5pt]

\hline
\end{tabular}

\begin{tabular}{|ll|}
\hline
&\\
$ ( A\ \rightarrow \ \mathsf{prop_s} )^{\sqbullet} \ourdef\ A^{\sqbullet}\ \rightarrow \ \mathsf{Prop}_{\funi }$  & $(\lambda x.b(x))^\sqbullet\ \ourdef\ \lambda x.b(x)^{\sqbullet}\ $ \\[5pt]
$\prs( ( A\ \rightarrow \ \mathsf{prop_s} )^{\sqbullet}) \ourdef  \mathfrak{s}_{\Pi} (A^\sqbullet, \lambda x:A^{\sqbullet}.\prop_{\funi}, \mathfrak{s}_{\mathsf{Prop_0}}\ )$
 &
$(\mathsf{Ap}(f,a))^{\sqbullet}\ourdef\ f^\sqbullet(a^{\sqbullet})$\\[5pt]
\hline
\end{tabular}

\begin{tabular}{|l|}
\hline

\\[5pt]
$(\, \Sigma_{x\in A}B(x)\,)^{\sqbullet}\ \ourdef \Sigma_{x: A^{\sqbullet}}\, B(x)^{\sqbullet}$\\[5pt]

$(\langle a,b\rangle)^{\sqbullet}\ \ourdef\ (a^{\sqbullet}, b^{\sqbullet})$\  \  \\[5pt]

$(\mathrm{El}_{\Sigma}(d, c))^{\sqbullet}\ \ourdef\ \mathsf{ind}_{\Sigma}(d^{\sqbullet}, x. y. c(x,y)^{\sqbullet})$\\[5pt]

$\prs ((\Sigma_{x\in A}B(x)\,)^{\sqbullet})\ \ourdef\ \mathfrak{s}_{\Sigma}(A^\sqbullet,   \lambda x: A^\sqbullet. B(x)^\sqbullet, \prs(A^\sqbullet), \lambda x: A^\sqbullet .\prs(B(x)^\sqbullet))\ $\\[5pt]

\hline
\end{tabular}

\begin{tabular}{|ll|}
\hline
&\\
$ (\Pi_{x\in A}B(x))^{\sqbullet}\ \ourdef\ \Pi_{x:A^{\sqbullet}}\, B(x)^{\sqbullet}$,\ &
$(\lambda x.b(x))^{\sqbullet}\ \ourdef\ \lambda x.b(x)^{\sqbullet}$\\[5pt]

$\prs( (\Pi_{x\in A}B(x))^{\sqbullet})\ \ourdef\
\mathfrak{s}_{\Pi}(A^\sqbullet, \lambda x: A^\sqbullet. B(x)^\sqbullet, \lambda x: A^\sqbullet .\prs(B(x)^\sqbullet))\  $  & \begin{tabular}{l} 
 $(\mathsf{Ap}(f,a) )^{\sqbullet}\ \ourdef\  f^\sqbullet(a^{\sqbullet})$
 \end{tabular}\\[5pt]
 \hline
 \end{tabular}
 
 \begin{tabular}{|ll|}
 \hline
 &\\[5pt]
$(\mathsf{N_0})^{\sqbullet}\ \ourdef \textbf{0}$ &
$(\mathsf{emp}_0(c))^\sqbullet\ \ourdef\ \mathsf{ind}_0(c^\sqbullet)$
\\[5pt]
$\prs((\mathsf{N_0})^\sqbullet)\ \ourdef\ \mathfrak{s}_0\ $ & \\[5pt]
\hline
\end{tabular}

\begin{tabular}{|ll|}
\hline
&\\
$(\mathsf{N_1})^{\sqbullet}\ \ourdef\ \textbf{1}$ & $(\star)^{\sqbullet}\ \ourdef\ \star$\\[5pt]
$\prs((\mathsf{N_1})^{\sqbullet})\ \ourdef\  \mathfrak{s}_1\ $ & 
$(\mathrm{El}_{\mathsf{N_1}}(t, c))^\sqbullet\ \ourdef\ \mathsf{ind_1}(t^\sqbullet,  c^\sqbullet)$\\[5pt]
\hline
\end{tabular}
 
 \begin{tabular}{|l|}
 \hline
\\
$(A+B)^{\sqbullet}\ \ourdef\  A^{\sqbullet}+B^{\sqbullet}$,\\[5pt]

$(\mathsf{inl}(a))^\sqbullet\ \ourdef\ \mathsf{inl}(a^\sqbullet)$\\[5pt]

$(\mathsf{inr}(b))^\sqbullet\ \ourdef\ \mathsf{inr}(b^\sqbullet)$\\[5pt]

$(\mathrm{El}_+(c, d_A, d_B))^{\sqbullet}\ \ourdef\ \mathsf{ind}_+(c^{\sqbullet}, x.d_{A}(x)^{\sqbullet}, y. d_B(y)^{\sqbullet})$\\[5pt]

$\prs((A+B)^{\sqbullet})\ \ourdef\  \mathfrak{s}_{+}(A^\sqbullet, B^\sqbullet, \prs(A^\sqbullet), \prs(B^\sqbullet))\ $\\[5pt]
\hline
\end{tabular}
 
 \begin{tabular}{|ll|}
 \hline
&\\
$(\mathsf{List}(A))^{\sqbullet}\ \ourdef\ \mathsf{List}(A^{\sqbullet})$,\ & $(\epsilon )^{\sqbullet}\ \ourdef\ \mathsf{nil}$\qquad
$(\mathsf{cons}(\ell, a))^\sqbullet\ \ourdef\ \mathsf{cons}(\ell^{\sqbullet}, a^{\sqbullet})$\\[5pt]

$\prs((\mathsf{List}(A))^{\sqbullet})\ \ourdef\ 
\mathfrak{s}_{\mathsf{List}}(A^\sqbullet, \prs(A^\sqbullet)) $\ 
& 
$(\mathrm{El}_{\mathsf{List}}(c,d, l)^{\sqbullet}\ \ourdef\ \mathsf{ind}_{\mathsf{List}}( c^{\sqbullet}, d^{\sqbullet}, x.y.z. l(x,y,z)^{\sqbullet})$\\[5pt]
\hline
\end{tabular}
 
\begin{tabular}{|ll|}
 \hline
&\\
$(\bot)^{\sqbullet}\ \ourdef\ \textbf{0}$ & 
\begin{tabular}{l} 
$(\mathsf{r_0}(c))^\sqbullet\ \ourdef\ \mathsf{ind_0}(c^\sqbullet)$
\end{tabular}\\[5pt]

$\prp((\bot)^\sqbullet)\ \ourdef\ \mathfrak{p}_0 $ &\\[5pt] 

$\prs((\bot)^\sqbullet)\ \ourdef\ \mathfrak{s}_{coe}((\bot)^\sqbullet, \prp((\bot)^\sqbullet) )$ &\\[5pt]
\hline
\end{tabular}

\begin{tabular}{|l|}
\hline     
 \\[5pt]
 $(A\lor B)^{\sqbullet}\ \ourdef\ A^{\sqbullet}\lor B^{\sqbullet}$\\[5pt]
 
 $(\mathsf{inl}_{\lor}(a))^\sqbullet\  \ourdef\ \mathsf{inl}_\vee(a^\sqbullet)$\qquad
$(\mathsf{inr}_{\lor}(b))^\sqbullet\ \ourdef\ \mathsf{inr}_\vee(b^\sqbullet)$\\[5pt]

$(\mathrm{El}_{\lor}(d, c_A, .c_B))^\sqbullet\ \ourdef\  \mathsf{ind}_{\vee}( d^\sqbullet,  x.c_1(x)^\sqbullet,  y.c_2(y)^\sqbullet)$\\[5pt]

$\prp((A\lor B)^{\sqbullet})\ \ourdef\  \mathfrak{p}_{\lor}(A^\sqbullet, B^\sqbullet)\ $\\[5pt]

$\prs((A \lor B)^\sqbullet)\ \ourdef\ \mathfrak{s}_{coe}( (A\lor B)^\sqbullet, \prp((A\lor B)^{\sqbullet})) $\\[5pt]
\hline
\end{tabular}

\begin{tabular}{|ll|}
\hline
&\\
$(A\land B)^{\sqbullet} \ourdef\ A^{\sqbullet}\times B^{\sqbullet}$ & 
$(\langle a,_{\land} b\rangle)^\sqbullet\ \ourdef\ (a^\sqbullet, b^\sqbullet)$
 \\[5pt]
  
 $\prp((A\land B)^\sqbullet)\ \ourdef\ \mathfrak{p}_{\times}(A^\sqbullet, B^\sqbullet, \prp(A^\sqbullet), \prp(B^\sqbullet) \, )$ & $(\pi_{i}(c))^\sqbullet\ \ourdef\ \ipr(c^\sqbullet) \mbox{(for $i$=(1,2))}$\\[5pt]
 
 $\prs((A \land B)^\sqbullet)\ \ourdef\ \mathfrak{s}_{coe}((A\land B)^\sqbullet, \prp((A\land B)^\sqbullet))$ &\\[5pt]
\hline
\end{tabular}
 
\begin{tabular}{|ll|}
\hline
&\\
$(A\rightarrow B)^\sqbullet\ \ourdef\ A^\sqbullet\rightarrow B^\sqbullet$ & $(\lambda_{\rightarrow}x.b)^\sqbullet\ \ourdef\ \lambda x.b^\sqbullet$\\[5pt]

$\prp((A\rightarrow B)^\sqbullet)\ \ourdef\ \mathfrak{p}_{\rightarrow}(A^\sqbullet, B^\sqbullet, \prp(A^\sqbullet), \prp(B^\sqbullet))$ &
$(\mathsf{Ap}_{\rightarrow}(f,a))^\sqbullet\ \ourdef\ f^\sqbullet(a^\sqbullet)$\\[5pt]

$\prs((A\rightarrow B)^\sqbullet )\ \ourdef\ \mathfrak{s}_{coe}((A \rightarrow B)^\sqbullet, \prp((A\rightarrow B)^\sqbullet))$& \\[5pt]
\hline
\end{tabular}

\begin{tabular}{|l|}
\hline  
\\[5pt]
$(\exists_{x\in A}B(x))^{\sqbullet}\ \ourdef\  \exists_{x:A^{\sqbullet}}\, B(x)^{\sqbullet}$\\[5pt]
$(\langle a,_\exists b\rangle)^\sqbullet\ \ourdef\ (a^\sqbullet,_{\exists} b^\sqbullet)$\\[5pt] 

$(\mathrm{El}_{\exists}(d, c))^\sqbullet\ \ourdef\ \mathsf{ind}_{ \exists }(d^\sqbullet,  x.  y. c(x,y)^\sqbullet)$\\[5pt] 

$\prp((\exists_{x\in A}B(x))^{\sqbullet})\ \ourdef\ 
\mathfrak{p}_{\exists}(A^\sqbullet, \lambda x: A^\sqbullet. B(x)^\sqbullet \, )$\\[5pt]

$\prs((\exists_{x\in A}B(x))^\sqbullet)\ \ourdef\ \mathfrak{s}_{coe}( (\exists_{x\in A} B(x))^\sqbullet, \,\prp((\exists_{x\in A}B(x))^{\sqbullet}))$ \\[5pt]

\hline

\end{tabular}

\begin{tabular}{|ll|}
\hline
    &\\
$(\forall_{x\in A}B(x))^{\sqbullet}\ \ourdef  \Pi_{x:A^{\sqbullet}}\ B(x)^{\sqbullet}$ & $(\lambda_{\forall}x.b(x))^\sqbullet\ \ourdef\ \lambda x.b(x)^{\sqbullet}$\\[5pt]
$\prp((\forall_{x\in A}B(x))^{\sqbullet})\ \ourdef\ \mathfrak{p}_{\Pi}(A^\sqbullet, \lambda x: A^\sqbullet .B(x)^\sqbullet, \lambda x: A^\sqbullet . \prp(B(x)^\sqbullet))\  $ &
 $(\mathsf{Ap}_{\forall}(f,a))^{\sqbullet}\ \ourdef\ f^\sqbullet(a^{\sqbullet})$\\[5pt]
 $\prs((\forall_{x\in A}B(x))^{\sqbullet})\ \ourdef\ \mathfrak{s}_{coe}( (\forall_{x\in A}B(x))^{\sqbullet}, \, \prp((\forall_{x\in A}B(x))^{\sqbullet} ))$ &\\[5pt]
 \hline
\end{tabular}

\begin{tabular}{|l|}
\hline
\\[5pt]
$(\mathrm{Id}(A,a,b))^{\sqbullet} \ourdef\ \mathrm{Id}_{A^{\sqbullet}}(a^\sqbullet, b^\sqbullet)$\\[5pt]
$(\mathsf{id_A}(a))^\sqbullet\ \ourdef\ \mathsf{refl}_{a^\sqbullet}$\\[5pt]

$(\mathrm{El}_{\mathrm{Id}}(p, c))^{\sqbullet}\ \ourdef\ \mathsf{ind}_\mathrm{Id}(p^\sqbullet,  \ x.c(x)^{\sqbullet})$\\[5pt]

$\prp((\mathrm{Id}(A,a,b))^{\sqbullet})\ \ourdef\ \mathfrak{p}_{Id}(A^\sqbullet, \prs(A^\sqbullet), a^\sqbullet, b^\sqbullet ) $\\[5pt]

$\prs((\mathrm{Id}(A,a,b))^{\sqbullet})\ \ourdef\ \mathfrak{s}_{coe}((\mathrm{Id}(A,a,b))^{\sqbullet}\, ,\, \prp((\mathrm{Id}(A,a,b))^{\sqbullet}))$ \\[5pt]
\hline
\end{tabular}

\subsection*{Appendix B: An alternative translation of  \mtt-syntax in \hott}

\begin{definition}
We define a partial interpretation of the raw syntax of types and terms of \mtt\ in the raw-syntax of \hott\
$$(-)^\inter:  \mbox{Raw-syntax }(\mtt) \ \longrightarrow \ \mbox{Raw-syntax }(\hott) $$
assuming to have defined two auxiliary partial functions:

$$\prp(-):  \mbox{Raw-syntax }(\hott) \ \longrightarrow \ \mbox{Raw-syntax }(\hott) $$
and 
 $$\prs(-):  \mbox{Raw-syntax }(\hott) \ \longrightarrow \ \mbox{Raw-syntax }(\hott) $$

$(-)^\inter$ is defined on contexts and judgements of \mtt\ exactly as the interpretation in definition \ref{mtt-int}.

In the case of \mtt\ term and type constructors all clauses are defined as the corresponding ones in the previous table with the exception of those which are listed below:

\vspace{1.0em}
\begin{tabular}{|l|}
\hline
\\
$(\mathsf{prop_s} )^{\inter} \ourdef  \mathsf{Prop}_{\funi}$\\[5pt]
$\prs ((\mathsf{prop_s} )^{\inter} ) \ourdef \mathfrak{s}_{\mathsf{Prop_0}} $\\[8pt]

$(\tau(p))^{\inter}\ourdef  \vert\vert\fpr (p^\inter) \vert\vert$ \\[5pt]
$\prp ( (\tau(p))^{\inter}) \ourdef \spr(p^\inter)$  \\[5pt]
$\prs ( (\tau(p))^{\inter})\ourdef \mathfrak{s}_{coe} ((\tau(p))^{\inter} , \prp ( (\tau(p))^{\inter}) ) $  \\[5pt]

\begin{tabular}{l}
\\[5pt]
 $(\widehat{\bot})^{\inter} \ourdef \ (\vert\vert \textbf{0}\vert\vert, \ \mathfrak{p}_{\vert\vert\ \vert\vert}(\textbf{0}) )$\qquad $({\widehat{\top} })^\inter \ \ourdef \ ( \vert\vert{\bf 1}\vert\vert , \ \mathfrak{p}_{\vert\vert\ \vert\vert}(\textbf{1}) \, ) $\\[5pt]

$(p\widehat{\lor} q)^\inter  \ourdef (\fpr( p^\inter)  \lor \fpr(q^\inter) \ ,\ \mathfrak{p}_{\lor }(\fpr(p^\inter),\fpr (q^\inter )\, )) $\\[5pt]
$(p\widehat{\land} q)^\inter \ourdef 
(\vert\vert\fpr(p^\inter) \times \fpr(q^\inter)\vert\vert\ ,\ \mathfrak{p}_{\vert\vert\times \vert\vert}(\fpr(p^\inter),\fpr (q^\inter ))) $\\[5pt]
$(p\widehat{\rightarrow}q)^\inter\ \ourdef\ (\vert\vert\fpr(p^\inter) \rightarrow \fpr(q^\inter)\vert\vert\ ,\ \mathfrak{p}_{\vert\vert\rightarrow \vert\vert} (\fpr(p^\inter),\fpr (q^\inter )) )     $\\[5pt]
$(\widehat{\exists}_{x\in A}\, p(x))^\inter\ \ourdef\  (\exists_{x:A^\inter} \ p(x)^\inter,\ \mathfrak{p}_\exists ( A^\inter,\lambda x. \fpr(p(x)^\inter))\  \, ) $\\[5pt]
$(\widehat{\forall}_{x\in A}\, p (x))^\inter\ \ourdef\ (\vert\vert\Pi_{x:A^\inter}\ p(x)^\inter\vert\vert\ ,
\ \mathfrak{p}_{\vert\vert\  \vert\vert}(\Pi_{x: A^\inter}\ \fpr( p(x)^\inter)\, )$\\[5pt]
$(\widehat{\mathrm{Id}}(A,a,b))^\inter\ \ourdef\ (\vert\vert\mathrm{Id}_{A^\inter}(a^\inter,b^\inter)\vert\vert\ ,\ \mathfrak{p}_{\vert\vert\ \vert\vert}(\, \mathrm{Id}_{A^\inter}(a^\inter, b^\inter)\, )$\\[5pt]

\end{tabular}\\[5pt]

\hline
\end{tabular}

\begin{tabular}{|ll|}
 \hline
&\\[5pt]
$(\bot)^{\inter}\ \ourdef\ \vert\vert \textbf{0}\vert\vert$ & 
$(\mathsf{r_0}(c))^\inter\ \ourdef\ \mathsf{ind_{\bot^\inter}}(c^\inter)$
\\[5pt]

$\prp((\bot)^\inter)\ \ourdef\ \mathfrak{p}_{\vert\vert\ \vert\vert}(\textbf{0}) $ &
$\prs((\bot)^\inter)\ \ourdef\ \mathfrak{s}_{coe}((\bot)^\inter, \prp((\bot)^\inter) )$ \\[5pt]
\hline
\end{tabular}

\begin{tabular}{|ll|}
\hline
&\\
$(A\land B)^{\inter} \ourdef\ \vert\vert A^{\inter}\times B^{\inter}\vert\vert $ & 
$(\langle a,_{\land} b\rangle)^\inter\ \ourdef\ (a^\inter,_{\land} b^\inter)$
 \\[5pt]
  
 $\prp((A\land B)^\inter)\ \ourdef\ \mathfrak{p}_{\vert\vert\times \vert\vert}(A^\inter, B^\inter)$ & $(\pi_{i}(c))^\inter\ \ourdef\ {\ipr}_{\land}( c^\inter)\ \mbox{(for $i$=(1,2))}$\\[5pt]
 
 $\prs((A \land B)^\inter)\ \ourdef\ \mathfrak{s}_{coe}((A\land B)^\inter, \prp((A\land B)^\inter))$ &\\[5pt]
 
\hline
\end{tabular}
 
\begin{tabular}{|ll|}
\hline
&\\
$(A\rightarrow B)^\inter\ \ourdef\ \vert\vert A^\inter\rightarrow B^\inter\vert\vert$ & $(\lambda_{\rightarrow}x.b)^\inter\ \ourdef\ \lambda_{\rightarrow} x.b^\inter$\\[5pt]

$\prp((A\rightarrow B)^\inter)\ \ourdef\ \mathfrak{p}_{\vert\vert\rightarrow \vert\vert}(A^\inter, B^\inter)$ &
$(\mathsf{Ap}_{\rightarrow}(f,a))^\inter\ \ourdef\ f_{\rightarrow}^\inter (a^\inter)$\\[5pt]
$\prs((A\rightarrow B)^\sqbullet )\ \ourdef\ \mathfrak{s}_{coe}((A \rightarrow B)^\sqbullet, \prp((A\rightarrow B)^\sqbullet))$& \\[5pt]

\hline
\end{tabular}

\begin{tabular}{|ll|}
\hline
    &\\
$(\forall_{x\in A}B(x))^{\inter}\ \ourdef  \vert\vert \Pi_{x:A^{\inter}}\ B(x)^{\inter}\vert\vert$ & $(\lambda_{\forall}x.b(x))^\inter\ \ourdef\ \lambda_{\forall} x.b(x)^{\inter}$\\[5pt]
$\prp((\forall_{x\in A}B(x))^{\inter})\ \ourdef\ \mathfrak{p}_{\vert\vert\Pi \vert\vert}(A^\inter, \lambda x: A^\inter .B(x)^\inter)\  $ &
 $(\mathsf{Ap}_{\forall}(f,a))^{\inter}\ \ourdef\ f_{\forall}^\inter (a^{\inter})$ \\[5pt]
 
  $\prs((\forall_{x\in A}B(x))^{\sqbullet})\ \ourdef\ \mathfrak{s}_{coe}( (\forall_{x\in A}B(x))^{\sqbullet}, \, \prp((\forall_{x\in A}B(x))^{\sqbullet} ))$ &\\[5pt]
 \hline
\end{tabular}

\begin{tabular}{|l|}
\hline
    \\[5pt]
$(\mathrm{Id}(A,a,b))^{\inter} \ourdef\ \vert\vert \mathrm{Id}_{A^{\inter}}(a^\inter, b^\inter)\vert\vert$ \\[5pt]

$(\mathsf{id_A}(a))^\inter\ \ourdef\ \vert\mathsf{refl}_{a^\inter}\vert$\\[5pt]

$(\mathrm{El}_{\mathrm{Id}}(p, c))^\inter \ \ourdef\ \mathsf{ind}_{\vert\vert\ \vert\vert}( p^\inter,  z.\mathsf{ind}_{\mathrm{Id}}(z, x.c(x)^{\inter})\ )$\\[5pt]

$\prp((\mathrm{Id}(A,a,b))^{\inter})\ \ourdef\ \mathfrak{p}_{\vert\vert\ \vert\vert}(A^\inter, a^\inter, b^\inter, \mathrm{Id}_{A^\inter}(a^\inter, b^\inter)\ ) $\\[5pt]
$\prs((\mathrm{Id}(A,a,b))^{\sqbullet})\ \ourdef\ \mathfrak{s}_{coe}((\mathrm{Id}(A,a,b))^{\sqbullet}\, ,\, \prp((\mathrm{Id}(A,a,b))^{\sqbullet}))$ \\[5pt]

\hline
\end{tabular}
\vspace{1.0em}

 \end{definition}
 
It is possible to show a validity theorem for this interpretation by an argument quite similar to that in \ref{mtt-val}. 
 
\end{document}